\theoremstyle{thmstyletwo}%
\newtheorem{theorem}{Theorem}%
\newtheorem{lemma}[theorem]{Lemma}%
\newtheorem{remark}{Remark}%
\numberwithin{equation}{section}
\DeclareRobustCommand*\cal{\@fontswitch\relax\mathcal}
\def\e{\varepsilon}
\def\cF{{\cal F}}
\def\cD{{\cal D}}
\def\cP{{\cal P}}
\def\cJ{{\cal J}}
\def\R{\mathbb{R}}
\def\T{{\relax\ifmmode I\!\!\hspace{-1pt}T\else$I\!\!\hspace{-1pt}T$\fi}}
\def\lsim{\raisebox{-1ex}{$~\stackrel{\textstyle<}{\sim}~$}}
\def\cS{{\cal F}}
\def\cD{{\cal D}}
\def\cJ{{\cal J}}
\def\cN{{\cal N}}
\def\cX{{\cal X}}
\def\cS{{\cal S}}
\def\cP{{\cal P}}
\newcommand{\be}{\begin{equation}}
\newcommand{\ee}{\end{equation}}
\newcommand{\bea}{$$ \begin{array}{lll}}
\newcommand{\eea}{\end{array} $$}
\def \exp{\mathop{\rm    exp}}
\newcommand\dist{\mathop{\rm dist}}
\newcommand\eref[1]{{\rm (\ref{#1})}}
\newcommand\norm[1]{\left\|#1\right\|}
\begin{document}

\DOI{DOI HERE}
\copyrightyear{2025}
\vol{00}
\pubyear{2025}
\access{Advance Access Publication Date: Day Month Year}
\appnotes{Paper}
\copyrightstatement{}
\firstpage{1}

\title[Optimal Recovery Meets Minimax Estimation]{Optimal Recovery Meets Minimax Estimation}

\author{Ronald DeVore\ORCID{0009-0005-1105-5925}
\address{\orgdiv{Department of Mathematics}, \orgname{Texas A\&M University}}}
\author{Robert D. Nowak\ORCID{0000-0001-8743-0887}
\address{\orgdiv{Department of Electrical and Computer Engineering}, \orgname{University of Wisconsin--Madison}}}
\author{Rahul Parhi*\ORCID{0000-0002-1971-7699}
\address{\orgdiv{Department of Electrical and Computer Engineering}, \orgname{University of California, San Diego}}}
\author{Guergana Petrova\ORCID{0000-0002-8873-4150}
\address{\orgdiv{Department of Mathematics}, \orgname{Texas A\&M University}}}
\author{Jonathan W. Siegel\ORCID{0000-0002-1493-4889}
\address{\orgdiv{Department of Mathematics}, \orgname{Texas A\&M University}}}

\authormark{DeVore, Nowak, Parhi, Petrova, and Siegel}

\corresp[*]{Corresponding author: \href{mailto:rahul@ucsd.edu}{rahul@ucsd.edu}}

\received{Date}{0}{Year}
\revised{Date}{0}{Year}
\accepted{Date}{0}{Year}

\abstract{A fundamental problem in statistics and machine learning is to estimate a function $f$ from possibly noisy observations of its point samples. The goal is to design a numerical algorithm to construct an approximation $\hat f$ to $f$ in a prescribed norm that asymptotically achieves the best possible error (as a function of the number $m$ of observations and the variance $\sigma^2$ of the noise). This problem has received considerable attention in both nonparametric statistics (noisy observations) and  optimal recovery (noiseless observations). Quantitative bounds require assumptions on $f$, known as \emph{model class} assumptions. Classical results assume that $f$ is in the unit  ball of a Besov space. In nonparametric statistics, the best possible performance of an algorithm for finding $\hat f$ is known as the minimax rate and has been studied in this setting under the assumption that the noise is Gaussian. In optimal recovery, the best possible performance of an algorithm is known as the optimal recovery rate and has also been determined in this setting.  While one would expect that the minimax rate recovers the optimal recovery rate when the noise level $\sigma$ tends to zero, it turns out that the current results on  minimax rates do not carefully determine the dependence on $\sigma$ and  the limit cannot be taken. This paper handles this issue and determines the \emph{noise-level-aware} (NLA) minimax rates for Besov classes when error is measured in an $L_q$-norm with matching upper and lower bounds. The end result is a reconciliation between minimax rates and optimal recovery rates. The NLA minimax rate continuously depends on the noise level and recovers the optimal recovery rate when $\sigma$  tends to zero.
}
\keywords{Besov spaces; minimax estimation; nonparametric regression; optimal recovery.}

\maketitle

\section{Introduction}
\label{intro}
Let $\Omega \subset \R^d$ be a bounded domain (open, bounded, simply connected set) with a Lipschitz boundary and  $\overline{\Omega}:= \Omega \cup \partial\Omega$ be the closure of $\Omega$. 
A fundamental problem in statistics and machine learning is to construct an   approximation $\hat{f}$ to a continuous function $f: \overline{\Omega}\to \R$ when given the $m$  noisy observations $y_i$ at the data sites $x_i\in\overline \Omega$, 
\begin{equation} \label{eq:observations}
    y_i = f(x_i) + \eta_i, \quad i = 1, \ldots, m,
\end{equation}
where  $\eta_i$, $i = 1, \ldots, m$, are i.i.d.\ Gaussian random variables with mean $0$ and variance $\sigma^2$.
We will evaluate the quality of the approximant  $\hat f$ by  measuring  the 
error $\|f-\hat f\|_{L_q(\Omega)}$  in 
an $L_q(\Omega)$-norm.  Notice that both $\hat f$ and the error  are random variables depending on the noise.
 
In order to obtain bounds on $\|f-\hat f\|_{L_q(\Omega)}$,  one  needs additional information on the data generating function $f$.  This additional information is the fact  that $f$ lies in a \emph{model class} $K$ which is a compact subset of $C(\overline{\Omega})$. In this paper, we consider $K$ to be the unit ball of a Besov space $B^s_\tau(L_p({\Omega}))$.  For those unfamiliar with Besov spaces, this space   can be intuitively viewed as the space of functions with $s > 0$ derivatives in $L_p({\Omega})$, where $0 < p \leq \infty$,  and $\tau$,   $0 < \tau \leq \infty$, allows for additional finer control of the regularity of the underlying functions (see \S \ref{S:Besov} for the definition and properties of these spaces). The unit ball of $B^s_\tau(L_p({\Omega}))$ compactly embeds into $C(\overline{\Omega})$ if and only if 
\begin{equation}
\label{compactembedding}
s>d/p.
\end{equation}
Therefore, we assume \eqref{compactembedding} in the remainder of this paper. To simplify the presentation, throughout this paper, we will only consider the case when $\Omega = (0,1)^d$ is the unit cube in $\mathbb{R}^d$.  We leave the  extension of our results to more general domains for future work.

Given a set $\cX$ of $m$  data sites 
\begin{equation}
\cX:=\{x_1, x_2, \ldots, x_m\}\subset \overline \Omega,
\end{equation}
we consider   an algorithm $A$ (for numerically creating an approximation  $\hat f$) to be a mapping 
 \begin{equation}
    A: \R^m \to L_q(\Omega),
\end{equation}
that sends  the observations \eqref{eq:observations} to an approximation $\hat{f} = A(y)$ of $f$, where $y=(y_1, \ldots, y_m)$.    Notice that $\hat f$ is an $L_q(\Omega)$-valued random variable.  The performance of the algorithm is usually measured via the worst case $L_q(\Omega)$-risk 
\be
    E_A(K;\sigma,\cX)_q := \sup_{f \in K} \: \mathbb E\norm{f - A(y)}_{L_q(\Omega)}.
\ee
Here the expectation $\mathbb E$ is over the random noise $\eta_i$ in our measurements   $y_i$, $i=1,\dots,m$. Further, we define the
\emph{minimax risk} to be the optimal worst-case performance over all possible algorithms and choices of $\cX$ with $|\cX|=m$, i.e.,
\be
\label{MX}
R_m(K;\sigma)_q:={\inf_{A,\cX} E_A(K;\sigma,\cX)_q.
}
\ee
The special case when $\sigma=0$, i.e., the case of no noise, is the problem of \emph{optimal recovery} (OR) and has been well studied in the mathematics community.

The most celebrated result for this minimax problem is the one  of Donoho and Johnstone~\cite{DJ} who study the case   $\Omega=(0,1)$, $d=1$, $q=2$, and $m$ equally spaced  data sites $\{x_i\}_{i=1}^m$ from $\Omega$.   They prove that a certain algorithm, known as wavelet shrinkage, is asymptotically minimax optimal for the estimation of functions that lie in the unit ball $K$ of the Besov spaces $B_\tau^s(L_p(0,1))$,   $1 \leq \tau,p \leq \infty$, and $s > 1/p$. Moreover, they prove   that for fixed $\sigma>0$, the asymptotic decay of the minimax error in $L_2$ is
\be \label{DJminimax}
R_m(K;\sigma)_2\asymp m^{-\frac{s}{2s+1}}, \quad m\ge 1.
\ee
On the other hand, the OR rate in this case is well known to  behave asymptotically as (see \cite{BDPS,NT,KNS})
\be \label{ORrate}
R_m(K;0)_2\asymp m^{-s+(1/p-1/2)_+}, \quad m\ge 1.
\ee

The goal of this paper is to understand the discrepancy between the minimax and OR rates.  In particular, we note that the rate decay \eref{DJminimax} needs to be interpreted as follows. For each fixed $\sigma>0$, the constants of equivalency in \eref{DJminimax} depend on
$\sigma$. This precludes letting $\sigma$ tend to zero to obtain the   recovery rate.  On the other hand, a special case of the results in this paper gives the rate
\be 
\label{newminimax}
R_m(K;\sigma)_2\asymp m^{-s+(1/p-1/2)_+} + \left[\frac{\sigma^2}{m}\right]^{\frac{s}{2s+1}}, \quad m\ge 1,
\ee
where now the constants of equivalency depend on $s,p,\tau$, but do not depend on $\sigma$ or $m$.  This new asymptotic noise-level-aware result allows one to recover the OR rate by letting $\sigma\to 0$.   

There are two components to the minimax (or OR) theory. The first one is to introduce  an algorithm and provide an upper bound for the performance of this algorithm.
In the above setting, the algorithm typically used to prove upper bounds is wavelet shrinkage.  That is, one uses the fact that each $f\in K$ has a wavelet decomposition and the membership in the Besov spaces can be exactly described by a suitable weighted $\ell_p$-norm on these coefficients.  One then uses the data observations to compute
empirical noisy wavelet coefficients and thresholds these coefficients to obtain $\hat f$. An issue that needs to be addressed in this approach is that when the smoothness $s$ is greater than one, it is necessary to use smooth
wavelets.  The wavelet decomposition in this case needs to be altered  near the boundary of $\Omega$.  This part of the analysis is cumbersome to carry out in detail.

The second part of the theory is to provide for any algorithm  and any $m$ data sites lower bounds that match the upper bounds obtained by wavelet thresholding. In the statistics literature, lower bounds are usually proved using information-theoretic methods such as the Le Cam and Fano methods which are based on the Kullback--Leibler (KL) divergence (see, e.g.,  \cite{barron1999risk,yang1999information,AN,DJ,DeJu}). 
Note that  the existing lower bounds do not apply simultaneously for all $\sigma$ and $m$ and  do not consider general data sites. In \S\ref{S:lowerbounds}, we prove our lower bounds (see Theorem~\ref{T:mainexplb}) in a self-contained manner that does not require information theory.

The Donoho--Johnstone result set the stage for a multitude of follow-up works (see, e.g., the overview article~\cite{maxiset} or the books~\cite{hardle2012wavelets,gine2016mathematical} and the references therein). 
 Several of these works (see, e.g.,~\cite{donoho1994ideal,DJ,DJKP,DeJu,maxiset,chambolle1998nonlinear,johnstone1999wavelets,MinimaxMaxiset,donoho1996density,hardle2012wavelets}),  concentrated on two main issues.  The first was to extend the theory to functions of several variables defined on domains $\Omega\subset \R^d$. The second issue  was to allow the recovery error to be measured in a general $L_q$-norm, $1\le q\le \infty$. 
 In particular, we highlight the work of~\cite{DeJu}, where  the authors generalized the result of Donoho and Johstone to Besov spaces defined on $\Omega=(0,1)^d$. They provide upper bounds for the performance of    wavelet shrinkage   for the $L_q$-estimation, $1\le q\le \infty$, of functions that lie in unit ball of $B_\tau^s(L_p(\Omega))$ with $0 \leq \tau, p \leq \infty$ and $s > d/p$.   For example, they show that  
 \be \label{DJminimaxupper}
R_m(K,\sigma)_2\le R_m(K,\cX_m,\sigma)_2\le C(\sigma)  m^{-\frac{s}{2s+d}}, \quad m\ge 1,
\ee
 when the $m$  sample sites $\cX_m$ lie on a tensor product grid of $\Omega$.

   The authors of~\cite{DeJu}  show that  the   rate in the upper bound \eref{DJminimax} is not always $m^{-\frac{s}{2s+d}}$ when $q \neq 2$, but  depends on the relationship between $s$, $p$, $q$, and $d$ (see \S \ref{S:concludingremarks}). However, it turns out (cf.,~\cite[Theorem~4]{DeJu}) that when these parameters satisfy the inequality
\begin{equation}
\label{parameters} 
 q<p+2\frac{sp}{d},
\end{equation}
the $L_q(\Omega)$ upper bound rate remains $m^{-\frac{s}{2s+d}}$.  Note that since $s>d/p$,
this is always the case when $q\le 2$.  We call the case
where \eref{parameters} holds the {\it primary case}.  For simplicity of the presentation in this paper, we restrict ourselves to this primary case when going forward.
 Later, in the section on concluding remarks, we explain what happens when the parameters are not in the primary case.

We reiterate again that the existing minimax bounds, for example the ones in  \cite{DJ,DeJu},  must be interpreted for a \emph{fixed noise level}.
Indeed, the current state of the literature determines the minimax rates asymptotically as $m\rightarrow \infty$ with $\sigma$ fixed. If one wants a bound for all $m\geq 1$, then (undetermined) constants depending on $\sigma$ must be introduced. For example, if \eqref{parameters} holds, the existing results imply that the minimax risk satisfies
\begin{equation}
    c(\sigma) m^{-\frac{s}{2s + d}} \leq  R_m(K;\sigma)_q\leq   C(\sigma) m^{-\frac{s}{2s + d}},\quad m\ge 1,
    \label{eq:MM-rate-intro}
\end{equation}
for every fixed $\sigma > 0$. This does not capture the effect of the noise level in the estimation error rate and instead, hides the effect of noise in the constants $c(\sigma), C(\sigma) > 0$. In fact, prior to the present paper, the exact dependence of these constants on $\sigma$ was  not well-understood.

Furthermore, one would expect, intuitively, that it should be easier to estimate the function when the noise level is small. It turns out that this intuition is correct and is revealed by the OR rate. In OR, the problem of interest corresponds to the case when $\sigma = 0$ (i.e., no noise). In that case, it is known that the OR rate satisfies the bounds
\begin{equation}
    c\, m^{-\frac{s}{d} + \left( \frac{1}{p} - \frac{1}{q}\right)_+} \leq  R_m(K;\sigma = 0)_q\leq   C m^{-\frac{s}{d} + \left( \frac{1}{p} - \frac{1}{q}\right)_+},
    \label{eq:OR-rate-intro}
\end{equation}
where $c, C > 0$ depend only on $s$, $p$, $q$, $\tau$, and $d$. For a proof of \eqref{eq:OR-rate-intro} in our specific setting, we refer the reader to~\cite{BDPS}. Other relevant references for the OR rate for Besov spaces include~\cite{NT,V, KNS} who treat various cases of $q$, $\Omega$, and the positioning of the data sites $\cX$. Comparing \eqref{eq:MM-rate-intro} and \eqref{eq:OR-rate-intro}, we see that the current minimax bounds do not recover the OR rates  as $\sigma \to 0$.

\subsection{Main Contributions of This Paper}
The motivation for this paper is to remove the gap between the minimax rate \eqref{eq:MM-rate-intro} and the OR rate \eqref{eq:OR-rate-intro}.   Our contributions are four-fold.
\begin{enumerate}
    \item We propose an algorithm for using the noisy data \eref{eq:observations}  to approximate functions from  Besov spaces based on piecewise polynomials and a certain thresholding procedure. This formulation is attractive in that it avoids the use of wavelets and having to deal with the proper definition and analysis of boundary-adapted wavelets (see, e.g.,~\cite{CDD,CDV}).

    \item We analyze the performance of our proposed algorithm and carefully keep track of the dependence on $\sigma$.    We provide an   $L_q(\Omega)$-performance bound in probability for  $q\geq 1$ when the parameters are in the primary case (see \eqref{parameters}). This bound is novel for three reasons:
    \begin{enumerate}
        \item To the best of our knowledge, this is the first bound to characterize the error of estimating a function from a Besov space \emph{in probability}.
        
        \item The proof of this result relies on some new results regarding the thresholding of vectors polluted with additive Gaussian noise, which may be of independent interest.

        \item This is the first bound to characterize the error of estimating a function from a Besov space that recovers the OR rate when $\sigma \to 0$.
    \end{enumerate}

    \item We readily derive an $L_q(\Omega)$-error bound in expectation from our probability bound that carefully keeps track of the dependence on $\sigma$. We then derive a   matching lower bound for the expected $L_q(\Omega)$-error that shows the precise dependence on $\sigma$. 
    
    \item Recently, the problem of deriving \emph{noise-level-aware} (NLA) minimax rates in various settings has been considered. For example,  minimax estimation of model classes consisting of H\"older functions (that is, smoothness measured in $L_\infty$) has been studied in~\cite{morkisz2020complexity}. In~\cite{10.1214/24-AOS2446}, the authors study model classes in Hilbert spaces defined by elliptical constraints. In~\cite{10374243,guo2024note,ghosh2025signal}, estimation of sparse vectors has been considered. To the best of our knowledge, our results are  the first NLA minimax rates which hold for general Besov classes with error measured in general $L_q$-norms. Our NLA minimax rate provides a rigorous justification to the intuitive fact that the estimation problem is easier in a low-noise regime. 
\end{enumerate}

We next summarize the main results of this paper.  We first state our results for the case $0<p\le q$ and $q\geq 1$, and later mention how one trivially derives the corresponding results for $p>q$ from these.
\begin{theorem} \label{T:mainprob}
    Let $\Omega = (0,1)^d$ and $1 \leq q < \infty$, $0< \tau \leq \infty$, $0 < p \leq q$ and $s > 0$ be parameters satisfying \eqref{compactembedding} and \eqref{parameters}. Assume that we observe noisy function values according to \eqref{eq:observations} as follows: 
    \begin{enumerate}
        \item The observation points lie on the tensor-product grid 
    \be 
    \label{Gn}
    G_n := \{0, 2^{-n}, \ldots, 1-2^{-n}\}^d, 
    \quad n\geq 1,
    \ee 
    with  total number of observations 
     $m = 2^{nd}.$ 
     \item The noise $\eta_i$, $i = 1, \ldots, m$, are i.i.d.\ Gaussian random variables with mean $0$ and variance $\sigma^2$ for some $\sigma \geq 0$.
    \end{enumerate}
    Then, for any 
    \be
    \label{alpha}
    0 < \alpha <  2-\frac{d(q-p)_+}{sp},
    \ee
    there exists an algorithm $A$ (that depends on $s$, $p$, $q$, $d$, $m$, $\alpha$, and $\sigma$) such that for any $f \in U(B^s_\tau(L_p(\Omega)))$ and $t\geq 0$, we have the bound
    \begin{equation}
    \label{T1}
        \mathbb P\left( \|f - A(y) \|_{L_q(\Omega)} \ge C\left[m^{-\frac{s}{d} + \left( \frac{1}{p} - \frac{1}{q}\right)_+} + t \left[\frac{\sigma^2}{m}\right]^{\frac{s}{2s + d}}\right] \right) \leq C e^{-ct^\alpha}.
    \end{equation}
     Here the constants $C$ and $c$ depend only upon $s,p,q,d$ and $\alpha$, but not on $m$ or $\sigma$.
\end{theorem}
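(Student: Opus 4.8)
The plan is to reduce the function-estimation problem to a sequence-space thresholding problem via a multiscale piecewise-polynomial decomposition, and then to control the stochastic error using concentration bounds for thresholded Gaussian vectors. First I would fix an integer $L = L(m)$ (the "approximation depth", to be chosen of order $\log m$) and, for each dyadic level $j \le L$, introduce the space $S_j$ of piecewise polynomials of a fixed degree $r > s$ on the dyadic partition of $(0,1)^d$ into cubes of sidelength $2^{-j}$. Using the grid $G_n$ with $m = 2^{nd}$ samples, one can compute, from the noisy data, a near-best $L_p$-approximation $P_j(y) \in S_j$ to $f$ by local least-squares (or quasi-interpolation) on each cube; since each cube of level $j \le n$ contains $2^{(n-j)d}$ sample points, the noise contribution to the local polynomial coefficients is Gaussian with variance $\asymp \sigma^2 2^{-(n-j)d} = \sigma^2 2^{jd}/m$. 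The estimator $A(y)$ is then formed by a greedy/thresholded telescoping sum $A(y) = P_{j_0}(y) + \sum_{j_0 < j \le L} T_{\lambda}\big(P_j(y) - P_{j-1}(y)\big)$, where the detail increments $g_j := P_j(y) - P_{j-1}(y)$ live in a space of dimension $\asymp 2^{jd}$ and $T_\lambda$ is a hard threshold (applied coefficient-wise in a suitable local polynomial basis, or blockwise) at level $\lambda \asymp (\sigma^2 2^{jd}/m)^{1/2}\sqrt{\log m}$.

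Next I would decompose the error as $\|f - A(y)\|_{L_q} \le \|f - P_L f\|_{L_q} + \|P_L f - A(y)\|_{L_q}$, where $P_L f$ is the (noiseless) near-best approximant. The first term is a deterministic approximation-theory estimate: since $f \in U(B^s_\tau(L_p(\Omega)))$ and $s > d/p$, one has $\|f - P_L f\|_{L_q} \lesssim 2^{-L(s/d - (1/p-1/q)_+)}$ by standard Jackson-type estimates and the embedding $B^s_\tau(L_p) \hookrightarrow B^{s'}_\infty(L_q)$ with $s' = s - d(1/p-1/q)_+$; choosing $L$ so that $2^{-Ld} \asymp m$ makes this term of size $m^{-s/d + (1/p-1/q)_+}$, the OR rate. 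The second (stochastic) term is where the thresholding analysis enters: on the noiseless side, the Besov membership gives a summability condition on the norms $\|f_j\|$ of the true detail increments $f_j := P_j f - P_{j-1} f$, namely $\sum_j 2^{j\tau(s - d/p + d/q)}\| f_j \|_{L_q}^\tau \lesssim 1$ in the primary case, so only $O(\log m)$ increments are "large" relative to the noise floor; the new results on Gaussian thresholding (referenced in contribution 2(b)) then bound, with probability $1 - Ce^{-ct^\alpha}$, the sum over levels of $\|T_\lambda(g_j) - f_j\|_{L_q}$ by a geometric series whose dominant term is $\asymp (\sigma^2 2^{Ld}/m)^{s/(2s+d) \cdot (\text{exponent bookkeeping})}$ — the bias–variance balance across scales producing exactly $t\,(\sigma^2/m)^{s/(2s+d)}$ after optimizing the number of active levels. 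The restriction $\alpha < 2 - d(q-p)_+/(sp)$ arises precisely from controlling the $L_q$-norm of a thresholded $2^{jd}$-dimensional Gaussian vector: the tail exponent degrades from the Gaussian $2$ by the number of coordinates that can conspire, which in the $\ell_q$-of-$\ell_{\text{dim}}$ bookkeeping costs $d(q-p)_+/(sp)$.

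I would carry out the steps in this order: (i) set up the piecewise-polynomial projections $P_j$ and record their approximation/stability properties and the exact Gaussian noise model on the local coefficients; (ii) prove the deterministic Besov decomposition identities (Jackson + Bernstein for $S_j$, giving the norm-equivalence between $\|f\|_{B^s_\tau(L_p)}$ and the weighted sequence norm of $(\|f_j\|)$); (iii) state and prove the core probabilistic lemma on hard-thresholding a Gaussian-corrupted vector $g = f_j + \sigma' z$ in $L_q$, yielding a bound of the form $\|T_\lambda(g) - f_j\|_{L_q} \le C(\text{ideal risk}) + C\,\sigma'\,t$ with failure probability $\le Ce^{-ct^\alpha}$; (iv) sum over the $O(\log m)$ levels, using the Besov summability to make the deterministic part a convergent series and a union bound (absorbing the $\log m$ into the constants via the subexponential tail) for the stochastic part; (v) balance $L$ and collect terms. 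The main obstacle I anticipate is step (iii)–(iv): getting a \emph{single} subexponential tail $e^{-ct^\alpha}$ that holds uniformly over all scales and all $f \in K$ simultaneously, rather than a per-scale Gaussian tail that degrades after the union bound. This requires the threshold $\lambda$ to be calibrated with the right $\sqrt{\log(\dim)}$ factor at each level and a careful argument that the \emph{number} of increments that are neither negligible nor perfectly recovered is controlled by the Besov norm — so that the random fluctuations one must union-bound over is effectively a bounded (not growing) number of "hard" coordinates, with the remaining coordinates handled deterministically by the bias term. Keeping the constants independent of both $\sigma$ and $m$ throughout this bookkeeping — especially ensuring the noiseless OR term and the noise term are cleanly separated rather than multiplied — is the delicate part that the paper's formulation is designed to make possible.
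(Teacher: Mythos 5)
Your high-level architecture matches the paper exactly: local least-squares polynomial fits on dyadic cubes, a telescoping decomposition $f \approx \sum_k T_k$, a sequence-space reduction to thresholding Gaussian-corrupted coefficients, and a bias/variance split across scales. But the specific threshold you propose is wrong in a way that makes the claimed rate unreachable, and this is precisely the point where the paper's construction has to be nonstandard.

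You set $\lambda_j \asymp \sigma_j\sqrt{\log m}$ at every level $j > j_0$, where $\sigma_j \asymp (\sigma^2 2^{jd}/m)^{1/2}$ is the per-coefficient noise level. This is the Donoho--Johnstone universal threshold, and it cannot give the noise-level-aware rate. To see why, write $\epsilon := (\sigma^2/m)^{s/(2s+d)}$ and bound the deterministic bias as in the paper's Theorem~\ref{T;detstoch}: $\|\min\{|\nu_j|,\lambda_j\}\|_q^* \le [\|\nu_j\|_p^*]^{p/q}\lambda_j^{1-p/q} \asymp 2^{-jsp/q}\,\sigma_j^{1-p/q}(\log m)^{(1-p/q)/2}$. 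Summing over $j\ge k^*$ (the critical level $2^{-k^*s}\asymp\epsilon$) yields $\Sigma_1 \asymp \epsilon\,(\log m)^{(1-p/q)/2}$. This stray logarithm is fatal: the probability bound \eqref{T1} with $t$ of order one is then violated for large $m$, because the left-hand event has probability one whenever $t \lesssim (\log m)^{(1-p/q)/2}$, yet the right side is a constant $<1$. The paper avoids this by choosing thresholds \eqref{choicelambdak} that are \emph{identically zero} for $k\le k^*$ (so coarse scales are not biased at all, only contribute $\sum_{k\le k^*}\sigma_k\lesssim\epsilon$ in noise) and that grow \emph{geometrically in units of the local noise}, $\lambda_k/\sigma_k \asymp 2^{(\beta-d/2)(k-k^*)}$ with $\beta\in(d/2,\,sp/(q-p))$, for $k>k^*$. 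The upper constraint $\beta<sp/(q-p)$ makes $\Sigma_1\asymp\epsilon$ with no log; the lower constraint $\beta>d/2$ makes the per-level exceedance probabilities decay doubly exponentially in $k-k^*$. This growing-threshold design is not a cosmetic variant of the universal threshold; it is the idea that removes the log factor.

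Relatedly, your explanation of the constraint $\alpha < 2 - d(q-p)_+/(sp)$ as "coordinates conspiring in the $\ell_q$-of-$\ell_{\dim}$ bookkeeping" does not reflect the actual mechanism. In the paper, the achievable tail exponent is $\alpha = (2\beta-d)/(\beta+\delta)$ for auxiliary parameters $\beta\in(d/2,sp/(q-p))$ and $\delta\in(0,d/2)$: the numerator $2\beta-d$ comes from how far the threshold outruns the noise, and the cap $\beta<sp/(q-p)$ is forced by requiring the deterministic geometric series $\Sigma_1$ to converge, not by a union-bound over coordinates. Letting $\beta\uparrow sp/(q-p)$ and $\delta\downarrow 0$ gives exactly the boundary $2-d(q-p)/(sp)$. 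You would also need the per-level probability bound in the form of the paper's Theorem~\ref{T:mainthresh1} (a Markov--Jensen argument with a carefully chosen convex $\phi$, not a standard Gaussian tail) to get a clean bound on $\mathbb{P}(\|\xi_\lambda\|_q^* > T)$ that is uniform in the block dimension; a naive coordinatewise union bound would again leak dimension-dependent factors.
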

\begin{remark}
Note that there always exists $\alpha$ satisfying  condition \eqref{alpha}  since we are in the primary regime. We also remark that the assumption on the noise being i.i.d. Gaussian can be relaxed. Indeed, Theorem \ref{T:mainprob} holds more generally for independent sub-Gaussian noise with variance bounded by $\sigma^2$ for each measurement.   
\end{remark}

The probability bound in Theorem~\ref{T:mainprob} readily implies an upper bound on the expectation of the error (see Theorem \ref{T:mainexp} below). It turns out that the upper bound  is also sharp. Indeed, in 
\S\ref{S:lowerbounds} we prove the following lower bound.

\begin{theorem}  \label{T:mainexplb} 
    Consider the model class $K = U(B^s_\tau(L_p(\Omega)))$, where $\Omega = (0,1)^d$ and $0 < p,\tau \leq \infty$,   $1\leq q\leq \infty$, and $s > 0$, are parameters satisfying \eqref{compactembedding} and \eref{parameters}. 
    Then, we have
    \begin{equation} \label{mmbound1}
R_m(K;\sigma)_q\geq C\left(m^{-\frac{s}{d} + \left(\frac{1}{p} -\frac{1}{q}\right)_+}+ \min\left\{1, \left[\frac{\sigma^2}{m}\right]^{\frac{s}{2s + d}}\right\} \right),
    \end{equation}
    where the  constants $C$ depends only on $s$, $p$, $q$, $\tau$, and $d$. 
\end{theorem}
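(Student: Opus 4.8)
The plan is to prove the two summands in \eqref{mmbound1} separately — the optimal-recovery term $m^{-s/d+(1/p-1/q)_+}$ (for every $\sigma\ge 0$) and the noise term $\min\{1,[\sigma^2/m]^{s/(2s+d)}\}$ — and then combine them via $a+b\le 2\max\{a,b\}$. Both bounds come from exhibiting, for an \emph{arbitrary} set $\cX$ of $m$ data sites, finitely many functions in $K$ that generate nearly the same noisy observations but are far apart in $L_q(\Omega)$. The only analytic input I would use is the scaling of bump functions $\phi(x)=h\,\psi((x-c)/\ell)$ for a fixed smooth $\psi$ supported in $(0,1)^d$: $\|\phi\|_{L_q}\asymp h\ell^{d/q}$, $\|\phi\|_{B^s_\tau(L_p)}\asymp h\ell^{d/p-s}$, and a sum of $N$ such bumps on disjoint translates of the $\ell$-cube, with arbitrary signs, obeys $\|\cdot\|_{B^s_\tau(L_p)}\asymp N^{1/p}h\ell^{d/p-s}$ and $\|\cdot\|_{L_q}\asymp N^{1/q}h\ell^{d/q}$; these I would quote from \S\ref{S:Besov}.

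\textbf{Optimal-recovery term.} First I would partition $(0,1)^d$ into $\asymp m$ congruent subcubes of side $\ell\asymp m^{-1/d}$; at least $\gtrsim m$ of them have closure disjoint from $\cX$ — call these \emph{clean}. Put $f_0\equiv 0$. If $p\le q$, let $f_1$ be a single bump on a clean subcube with $h\asymp\ell^{s-d/p}$, so that $\|f_1\|_{B^s_\tau(L_p)}\asymp 1$ and $\|f_1\|_{L_q}\asymp h\ell^{d/q}\asymp m^{-s/d+1/p-1/q}$. If $p>q$, let $f_1=\sum_j\phi_j$ be a bump on \emph{each} clean subcube with common height $h\asymp\ell^s$, so that $\|f_1\|_{B^s_\tau(L_p)}\asymp 1$ and, since the clean subcubes fill a fixed fraction of $\Omega$, $\|f_1\|_{L_q}\asymp h\asymp m^{-s/d}$. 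Either way $f_0,f_1\in K$ and both vanish on $\cX$, so $y$ has the same law under $f_0$ and under $f_1$; the triangle inequality then forces $\sup_{f\in K}\mathbb E\|f-A(y)\|_{L_q}\ge\frac12\|f_1\|_{L_q}\gtrsim m^{-s/d+(1/p-1/q)_+}$ for every algorithm $A$ and every $\cX$. No information theory enters here.

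\textbf{Noise term.} I would split on $\sigma$. If $\sigma^2\ge m$, the target is the constant $1$: take the two points $\pm\psi_0$ with $\psi_0$ a fixed bump of unit $B^s_\tau(L_p)$-norm; then $\|2\psi_0\|_{\ell_2(\cX)}^2\le 4m\|\psi_0\|_\infty^2\le 4\|\psi_0\|_\infty^2\sigma^2$, so the Kullback--Leibler divergence between the two observation laws is $O(1)$, and Le Cam's two-point bound gives $\cR_m(K;\sigma)_q\gtrsim\|\psi_0\|_{L_q}\gtrsim 1$. If $\sigma^2\le m^{-2s/d}$, then $[\sigma^2/m]^{s/(2s+d)}\le m^{-s/d}\le m^{-s/d+(1/p-1/q)_+}$, so this regime is already covered by the optimal-recovery term. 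In the main range $m^{-2s/d}\lesssim\sigma^2\le m$, set $\ell\asymp[\sigma^2/m]^{1/(2s+d)}$, which lies in $[c\,m^{-1/d},1]$; tile $(0,1)^d$ into $N=\ell^{-d}$ subcubes $Q_j$ and put $f_\varepsilon=\sum_{j=1}^N\varepsilon_j\phi_j$ for $\varepsilon\in\{\pm1\}^N$, where $\phi_j$ is a bump of height $h\asymp\ell^s$ on $Q_j$ (so $\|f_\varepsilon\|_{B^s_\tau(L_p)}\asymp h\ell^{-s}\asymp 1$ and $f_\varepsilon\in K$). The Varshamov--Gilbert bound supplies $\mathcal E\subset\{\pm1\}^N$ with $|\mathcal E|\ge 2^{N/8}$ and pairwise Hamming distance $\ge N/8$, whence $\|f_\varepsilon-f_{\varepsilon'}\|_{L_q}\gtrsim h\asymp[\sigma^2/m]^{s/(2s+d)}$ for distinct $\varepsilon,\varepsilon'\in\mathcal E$; and since the $Q_j$ partition $(0,1)^d$, one has $\sum_j\#(\cX\cap Q_j)=m$ for \emph{every} $\cX$, so $\|f_\varepsilon-f_{\varepsilon'}\|_{\ell_2(\cX)}^2\le 4h^2\|\psi\|_\infty^2m$ and the pairwise KL divergence is $\lesssim h^2m/\sigma^2\asymp N\asymp\log|\mathcal E|$ (using $\ell^{2s+d}\asymp\sigma^2/m$). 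Shrinking the absolute constant in $h\asymp\ell^s$ makes the KL at most $\frac{1}{16}\log|\mathcal E|$, and Fano's inequality then yields $\cR_m(K;\sigma)_q\gtrsim h\asymp[\sigma^2/m]^{s/(2s+d)}=\min\{1,[\sigma^2/m]^{s/(2s+d)}\}$.

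The hard part is the balancing act in this last case: $h$ is bounded above both by the Besov constraint ($h\lesssim\ell^s=N^{-s/d}$) and by the Fano budget on the KL divergence ($h\lesssim\sigma\sqrt{N/m}$), and equating the two forces $N\asymp(m/\sigma^2)^{d/(2s+d)}$ and $h\asymp[\sigma^2/m]^{s/(2s+d)}$. Thus the exponentially large family is essential: a two-point test tolerates only KL $\lesssim 1$ and would give the strictly weaker bound $[\sigma^2/m]^{1/2}$. What remains is routine bookkeeping — the Besov-norm estimates for sign-varying sums of bumps, uniformly over $0<p,\tau\le\infty$, and the verification that $\ell,N,h$ are admissible at the two regime boundaries (near $\sigma^2\asymp m$ one has $N\asymp 1$ and $[\sigma^2/m]^{s/(2s+d)}\asymp 1$, so a two-point argument again suffices there). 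Finally, none of this truly requires information theory: the optimal-recovery part is pure triangle inequality, and the hypercube step can equally be phrased as a lower bound for the Bayes risk of the product Gaussian model $\{f_\varepsilon\}$ under the uniform prior on $\{\pm1\}^N$, in line with the self-contained treatment in \S\ref{S:lowerbounds}.
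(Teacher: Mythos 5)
Your proposal is correct, and at the level of constructions it matches the paper's proof: the optimal-recovery term is obtained from a pair of functions in $K$ that agree on $\cX$ but are $\gtrsim m^{-s/d+(1/p-1/q)_+}$ apart in $L_q$, and the noise term is obtained from a Gilbert--Varshamov packing of sign-varying bumps of height $h\asymp\ell^s$ on $N\asymp\ell^{-d}$ subcubes at the critical scale $\ell\asymp[\sigma^2/m]^{1/(2s+d)}$, which keeps $\|y(f_i)\|^2/\sigma^2\lesssim\log N$ while the packing is $h$-separated in $L_q$. Where you diverge from the paper is in how the multi-hypothesis step is closed. You invoke Fano's inequality and (for $\sigma^2\ge m$) Le Cam's two-point bound; the paper deliberately avoids these. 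It instead uses a pigeonhole argument: the preimages $B_i:=A^{-1}\bigl(\{g:\|g-f_i\|_{L_q}<\tfrac{\tilde c_0}{2}\e\}\bigr)$ are disjoint, so some $B_i$ has $\mu_{0,\sigma}(B_i)\le 1/N$; then a self-contained Gaussian measure-comparison lemma (Lemma~\ref{L:perturb}, whose proof is essentially the KL-divergence inequality written out by hand) shows $\mu_{y(f_i),\sigma}(B_i)<1/2$, so $f_i$ realizes the lower bound. This avoids mutual information and even the formal language of divergences; the price is a short standalone lemma, and the benefit is that the argument really treats only two measures at a time. The paper also handles the regime $\sigma^2>m$ for free by observing that the right-hand side of \eqref{mmbound1} is monotone in $\sigma$, so it suffices to prove the bound for $\sigma\le\sqrt m$; your separate Le Cam test there is not needed. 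Finally, the paper quotes the OR separation \eqref{ORlb} from \cite{BDPS} rather than reconstructing the clean-subcube bump pair, but that is a presentational rather than mathematical difference.

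One small internal inconsistency in your write-up worth fixing: in the noise-term construction you first state the correct formula $\|\sum_j\pm\phi_j\|_{B^s_\tau(L_p)}\asymp N^{1/p}h\ell^{d/p-s}$ and then later write ``$\|f_\varepsilon\|_{B^s_\tau(L_p)}\asymp h\ell^{-s}$''; the two expressions happen to agree when $N=\ell^{-d}$, but only then. Keeping the $N^{1/p}$ factor explicit also makes it clearer why the single-bump construction ($N=1$) is what is needed in the OR term when $p\le q$ and the full tiling ($N\asymp\ell^{-d}$) when $p>q$.
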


The transformation of our probabilistic results into expectation results, gives the following theorem.

\begin{theorem}  \label{T:mainexp} 
    Let $\Omega = (0,1)^d$, and  $q\ge 1$, $0< \tau,p \leq \infty$,  and $s>0$, where the parameters $s$, $p$, and $q$ satisfy~\eqref{compactembedding} and \eqref{parameters}.  For the model class $K = U(B^s_\tau(L_p(\Omega)))$,  there exist constants  depending on
    $s$, $p$, $\tau$, and $q$, but not on $\sigma$ or $m$, such that
    \begin{equation} \label{mmbound2}
   R_m(K,\sigma)_q \asymp 
   \left(m^{-\frac{s}{d} + \left(\frac{1}{p} -\frac{1}{q}\right)_+}+ \min\left\{1,\left[\frac{\sigma^2}{m}\right]^{\frac{s}{2s + d}}\right\}\right),\quad  \sigma\geq 0 \quad {\rm and} \quad   m\ge 1. 
   \end{equation}
\end{theorem}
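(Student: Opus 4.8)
The plan is to derive Theorem~\ref{T:mainexp} from the in‑probability upper bound of Theorem~\ref{T:mainprob} together with the lower bound of Theorem~\ref{T:mainexplb}. The genuinely new work amounts to three things: converting the probability bound into a bound in expectation by integrating the tail; clipping the result with the trivial (zero) estimator so that the $\min\{1,\cdot\}$ appears; and reducing the range $p>q$ to the diagonal case $p=q$.

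\emph{Upper bound, $0<p\le q$.} First I would fix once and for all an exponent $\alpha$ admissible in \eqref{alpha} (one exists by \eqref{parameters}) and take $A$ to be the corresponding algorithm of Theorem~\ref{T:mainprob} on the grid $G_n$; for general $m$, apply the theorem after replacing $m$ by the largest $2^{nd}\le m$ and padding the design to $m$ points, which changes $m$ by at most the factor $2^{d}$ and so leaves the rate unchanged. Writing $a:=m^{-s/d+(1/p-1/q)_+}$, $b:=[\sigma^2/m]^{s/(2s+d)}$, and $Z:=\|f-A(y)\|_{L_q(\Omega)}$ for fixed $f\in K$, I would compute $\mathbb E Z=\int_0^\infty\mathbb P(Z>u)\,du$ by bounding the integrand by $1$ on $[0,Ca]$ and substituting $u=C(a+tb)$ on $(Ca,\infty)$, where \eqref{T1} applies:
\[
\mathbb E Z\le Ca+Cb\int_0^\infty\min\bigl\{1,\,Ce^{-ct^{\alpha}}\bigr\}\,dt=Ca+C'b,
\]
the last integral being a finite constant exactly because $\alpha>0$ (and the case $\sigma=0$ following by letting $t\to\infty$ in \eqref{T1}). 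Taking the supremum over $f\in K$ and the infimum over algorithms $A$ and designs $\cX$ with $|\cX|=m$ gives $R_m(K,\sigma)_q\le Ca+C'b$. The zero estimator separately yields $R_m(K,\sigma)_q\le\sup_{f\in K}\|f\|_{L_q(\Omega)}=:M<\infty$, finite because \eqref{compactembedding} makes $K$ a bounded subset of $C(\overline\Omega)$ and $|\Omega|=1$. Hence $R_m(K,\sigma)_q\le\min\{Ca+C'b,\,M\}\le Ca+\min\{C'b,\,M\}\le C''\bigl(a+\min\{1,b\}\bigr)$ with $C''$ depending only on $s,p,q,\tau,d$, which is the upper half of \eqref{mmbound2}.

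\emph{Lower bound and the range $p>q$.} The lower half of \eqref{mmbound2} is precisely Theorem~\ref{T:mainexplb}, whose hypotheses are exactly those of Theorem~\ref{T:mainexp}; combined with the previous paragraph this settles $0<p\le q$. For $p>q$ one has $(1/p-1/q)_+=0$, so the target rate is $m^{-s/d}+\min\{1,b\}$. Since $|\Omega|=1$ gives $\|g\|_{L_q(\Omega)}\le\|g\|_{L_p(\Omega)}$, the corresponding Besov norms are monotone in $p$, so $U(B^s_\tau(L_p(\Omega)))\subseteq U(B^s_\tau(L_q(\Omega)))$ and the risk for the $L_p$‑ball is dominated by that for the $L_q$‑ball. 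The latter is covered by the diagonal case $p=q$ just proved --- which automatically lies in the primary regime, since $q<q+2sq/d$ --- and is thus at most $C(m^{-s/d}+\min\{1,b\})$; the matching lower bound is once more Theorem~\ref{T:mainexplb}.

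I do not expect a real obstacle here, since Theorem~\ref{T:mainexp} is essentially a corollary of Theorems~\ref{T:mainprob} and~\ref{T:mainexplb}. The two points needing attention are keeping every constant free of $\sigma$ and $m$ --- which forces $\alpha$ to be chosen as a fixed function of $s,p,q,d$ before the tail integration, so that $\int_0^\infty\min\{1,Ce^{-ct^{\alpha}}\}\,dt$ is a legitimate constant --- and remembering to clip with the trivial estimator, without which the stated $\min\{1,\cdot\}$ would be missing and the bound vacuous once $\sigma^2/m\gtrsim 1$.
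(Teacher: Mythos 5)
Your argument is correct and follows essentially the same route as the paper's proof of Theorem~\ref{T:mainexp}: integrate the tail estimate of Theorem~\ref{T:mainprob} to pass from probability to expectation, clip with the zero estimator (which the paper builds in as Step~0 of the algorithm) to obtain the $\min\{1,\cdot\}$ factor when $\sigma^2\ge m$, invoke Theorem~\ref{T:mainexplb} for the lower bound, and reduce $p>q$ to $p=q$ via the embedding $U(B^s_\tau(L_p))\subset U(B^s_\tau(L_q))$ exactly as in Remark~\ref{R:p<q}. The only points you add that the paper leaves implicit are the dyadic padding for $m$ not of the form $2^{nd}$ and the explicit statement of the $p>q$ reduction at the level of expectations, both of which are harmless.
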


\begin{remark}
    Although we have presented our algorithm and results for recovering a function $f$ in the unit ball of the Besov space $B^s_\tau(L_p(\Omega))$, a simple scaling argument extends this to the case where $f$ lies in a scaled unit ball, i.e., $\|f\|_{B^s_\tau(L_p(\Omega))} \leq M$ for some fixed constant $M$. In this case, $\tilde{f} = f/M$ is in the unit ball. Upon rescaling our measurements in the same way, our observations become $\tilde{y}_i := \tilde{f}(x_i) + \eta_i/M$. Thus, the variance in the rescaled measurements becomes $\tilde{\sigma}^2 = \sigma^2/M^2$. After recovering $\tilde{f}$ from $\tilde{y}_i$ and multiplying the resulting estimate by $M$ to recover $f$, we obtain the bound
    \begin{equation}\label{scaled-equation}
        R_m(K,\sigma)_q \asymp 
   \left(Mm^{-\frac{s}{d} + \left(\frac{1}{p} -\frac{1}{q}\right)_+} + \min\left\{M,M^{\frac{d}{2s+d}}\left[\frac{\sigma^2}{m}\right]^{\frac{s}{2s + d}}\right\}\right),
    \end{equation}
    for $\sigma \geq 0$ and $m \geq 1$.
    The same remark holds for the probabilistic bounds in Theorem~\ref{T:mainprob}.
\end{remark}

The main takeaway from Theorems~\ref{T:mainprob}~and~\ref{T:mainexp} is that the NLA minimax rate scales as
\begin{equation}
   m^{-\frac{s}{d} + \left(\frac{1}{p} -\frac{1}{q}\right)_+}+ \min\left\{1,\left[\frac{\sigma^2}{m}\right]^{\frac{s}{2s + d}}\right\},  \label{eq:NLA-rate}
\end{equation}
and that the dependence on both $\sigma$ and $m$ is sharp. This  provides rigorous justification for the intuitive fact that the difficulty  of the estimation problem depends on the noise level. Furthermore, we see that the two terms in \eqref{eq:NLA-rate} are balanced when
\begin{equation}
    \sigma^2 \asymp m^{-\frac{2s}{d} + \frac{2s + d}{s}\left(\frac{1}{p} -\frac{1}{q}\right)_+}.
    \label{eq:critical-noise-level}
\end{equation}
This value of $\sigma$ is the transition value between the \emph{low-noise} and 
\emph{high-noise} regimes. In particular, this critical noise level depends on the number of data $m$ as well as the norm used to measure the error and the parameters of the model class.

Note that our proposed algorithm uses piecewise polynomials and a thresholding procedure. Although the use of piecewise polynomials for nonparametric function estimation is not new (see, e.g.,~\cite{chaudhuri1994piecewise,kolaczyk2005multiscale,binev2005universal,binev2007universal,morkisz2020complexity}), our specific algorithm, to the best of our knowledge,  has not been studied before.  Finally, we  want to mention that there has been  recent work aimed at generalizing classical statistical tools for deriving minimax lower bounds in expectation 
to the setting of high-probability minimax lower bounds~\cite{ma2024high}.

\subsection{Organization of the Paper}

In \S\ref{S:Besov}, we recall the definition of  Besov classes and some of their properties.  
In \S\ref{S:NOR}, we introduce our numerical algorithm. In \S
\ref{S:fd}, we revisit the classical problem of recovery of a finite dimensional vector from noisy coordinate measurements. In \S\ref{S:ubs}, we prove Theorem \ref{T:mainprob}, in \S\ref{S:lowerbounds}, we prove
Theorem \ref{T:mainexplb}, and
in \S\ref{SS:pTexp} we prove 
 Theorem \ref{T:mainexp}.
Finally, in \S\ref{S:concludingremarks}, we give our concluding remarks.

\section{Besov Spaces}
\label{S:Besov}
In this section, we recall the definition of Besov spaces and  their properties that we will need going forward.  We begin by  assuming that $\Omega\subset \R^d$ is any bounded Lipschitz domain, i.e.  $\Omega$ is an open simply connected bounded set with a boundary $\partial\Omega$ that can be locally described as the graph of  Lipshitz functions (see \cite{DS}).  
Later, we specialize to the case $\Omega=(0,1)^d$.

 If $s>0$, and $0<p,\tau\le\infty$, then the Besov space $B_\tau^s(L_p(\Omega))$ is a space of functions with smoothness of order $s$ in $L_p(\Omega)$.  Here $\tau $
is a secondary index that gives a finer gradation of these spaces.   Frequently, the case $0<p<1$ is avoided in the literature for technical reasons.  However, we want to include this case in what follows since it is an important model class for nonlinear methods of approximation.  
The material in this section is taken for the most part from the papers \cite{DP,DS,DSmono,BDPS}.  The presentation and notation used in this paper is most closely connected to that in  \cite{BDPS}.
We give a condensed version of this material on Besov spaces and the reader will have to refer
to those papers for some of the definitions and proofs.  
The univariate
case is covered in the book \cite{DL}.

If $r$ is a positive integer and $0<p\le \infty $, we define the modulus of smoothness by
\be 
\label{moduli}
\omega_r(f,t)_p:= \sup_{|h|\le t} \|\Delta_h^r(f,\cdot)\|_{L_p(\Omega_{rh})}, \quad t>0,
\ee
where 
\be 
\label{rdiff}
\Delta_h^r(f,\cdot):=(-1)^r\sum_{k=0}^r (-1)^k\binom{r}{k} f(\cdot+kh),
\ee 
is the $r$-th difference of $f$ for $h\in\R^d$ and 
\be 
\label{defOh}
\Omega_h:=\{x\in\Omega: [x,x+h]\subset \Omega\}. 
\ee 
If $s>0$, $0<\tau<\infty,$ and $0<p\le \infty$, then
$B_q^s(L_p(\Omega))$ is defined as the set of all functions in $L_p(\Omega)$ for which
\be 
\label{Bsemi}
|f|_{B_\tau^s(L_p(\Omega))}:= \left [\int_0^1 [t^{-s}\omega_r(f,t)_p]^\tau\frac{dt}{t}\right ]^{1/\tau}<\infty,
\ee 
 where $r$ is the smallest integer strictly bigger than $s$. When $\tau=\infty$, we replace the integral by a supremum in the definition. This is a (quasi-)semi-norm and we obtain the (quasi-)norm for $B_\tau^s(L_p(\Omega))$
by adding $\|f\|_{L_p(\Omega)}$ to this (quasi-)semi-norm. An equivalent (quasi-)semi-norm is given by
\be 
\label{Bsemi1}
  \left [\sum_{k=0}^\infty [2^{ks}\omega_r(f,2^{-k})_p]^\tau \right ]^{1/\tau}.
\ee 
This equivalence is proved by discretizing the integral in \eref{Bsemi} and using the monotonicity of $\omega_r$
as a function of $t$.

Let us remark on the role of $\tau$ in this definition.  If $\tau_1>\tau_2$, then we have that the spaces $B_{\tau_2}^s(L_p(\Omega))\subset B_{\tau_1}^s(L_p(\Omega)) $.  In other words, these spaces get smaller as $\tau$ gets smaller, and   thus all of these spaces are contained in $B_\infty^s(L_p(\Omega))$ once $s$ and $p$ are fixed. The effect of $\tau$ in the definition of the Besov spaces is subtle.  In this paper, the space $B_\infty^s(L_p(\Omega))$ will be the most important case when proving upper estimates for optimal recovery or minimax  because these estimates do not depend on $\tau$.  Accordingly, we use the abbreviated
notation
$B_p^s:=B_\infty^s(L_p(\Omega))$
going forward.

There are many equivalent descriptions of Besov classes.  Most notably, when $B_p^s$ embeds into $L_1(\Omega)$,
one can characterize the elements $f\in B_p^s$ as having a wavelet basis decomposition whose wavelet coefficients belong to a certain sequence
space~\cite{CDD}. We want to avoid the complexity of defining wavelets on domains and therefore will, instead, use a description of Besov spaces via piecewise polynomial approximation.
From the definition of Besov spaces, it follows that a function $f$ is in $B_p^s$ if and only if
\be 
\label{member}
\omega_r(f,t)_{L_p(\Omega)}\le Mt^s,\quad t>0,
\ee
and the smallest $M$ for which \eref{member} is valid is the (quasi-)semi-norm $|f|_{B_p^s}$.
We obtain the (quasi-)norm for $ B_p^s$ by adding the $L_p(\Omega)$ (quasi-)norm to this semi-norm.  When $s$ is not an integer, this space is commonly referred to as the (generalized)  Lipschitz smoothness space of order $s$ in $L_p$. It is important to note that when $s$ is an integer, we take $r=s+1$ in its definition,  and therefore the Besov space $B_p^s$ is not a Lipschitz space 
 in this case.
In view of \eref{Bsemi1}, we have that
 a function $f$ is in $B_p^s$ if and only if
\be 
\label{member1}
\omega_r(f,2^{-k})_{L_p(\Omega)}\le M'2^{-ks},\quad k=0,1,\dots,
\ee
and the smallest $M'$ for which this is true is an equivalent (quasi-)semi-norm.
We proceed now to show how $B_p^s$ is
characterised by local polynomial approximation.

\subsection{Local Approximation by Polynomials}
\label{SS:polyapprox}
Let $\cP_r$ denote the space of algebraic polynomials of order $r$ (total degree $r-1$) in $d$ variables, namely, $\cP_r$ is the span of the monomials $x^k=x_1^{k_1}\cdots x_d^{k_d}$ with the $k_j$'s being  non negative integers which satisfy  $\sum_{j=1}^d k_j<r$.  Note that $\omega_r(P,t)_{L_p(\Omega)}=0$, $t\ge 0$, for all $P\in\cP_r$.   We use the notation
\be 
\label{dimPr}
\rho:= \rho(r):=\dim(\cP_r) = \binom{d+r-1}{d}.
\ee

If $I$ is any cube contained in $\Omega$ and $g\in L_p(I)$, $0<p\le \infty$,  we let
\be 
\label{Er}
E_r(g,I)_p:=\inf_{P\in\cP_r}\|g-P\|_{L_p(I)},
\ee 
denote the error of the $L_p$-approximation of $g$ on $I$  by the elements of $\cP_r$.  A well known result in approximation theory, commonly referred to as Whitney's theorem \cite{storozhenko1978jackson}, says
that for any $g\in L_p(I)$ with $I$ a cube with sidelength $\ell_I$, we have
\be 
\label{WT}
cE_r(g,I)_p\le \omega_r(g,\ell_I)_{L_p(I)}\le CE_r(g,I)_p,
\ee 
with the constants $c,C>0$ depending only on $r,d$ and $p_0$ when $p_0\le p\le\infty$.  Whitney's theorem usually only refers to the lower inequality in \eref{WT}.  However, the upper inequality follows trivially since 
\begin{equation}
\omega_r(g,\ell_I)_{L_p(I)}=\omega_r(g-P,\ell_I)_{L_p(I)}\le C\|g-P\|_{L_p(I)},
\end{equation}
holds for any polynomial $P\in\cP_r$.

If $I\subset\Omega$ is a cube, we  say that $Q\in\cP_r$ is a near best $L_p(I)$-approximation to $g$ with constant $c_0 \ge 1$ if \looseness=-1
\be 
\label{nearbest1}
\|g-Q\|_{L_p(I)}\le c_0 E_r(g,I)_p.
\ee 
It is shown in Lemma 3.2 of \cite{DP} that if $Q\in\cP_r$ is a near best  approximation in $L_{ p}(I)$,  then it is also a near best  approximation in $L_{\bar p}(I)$ whenever $\bar p\ge p$.
  Another important remark is that any near best approximation $Q$ in $L_p(I)$ is near best on any larger cube $J$ which contains $I$ in the sense that
\be 
\label{nearbest2}
\|g-Q\|_{L_{ p}(J)}\le Cc_0 E_r(g,J)_{ p},
\ee 
where now $C$ depends additionally on $|J|/|I|$,  where $|I|$ denotes the measure of $I$.  In other words, a near best $L_p(I)$
approximation is also near best on larger cubes $J$ containing $I$  and larger values $\bar p\ge p$.
We shall use these facts throughout this paper.

\subsection{Polynomial Norms}
\label{SS:polynorms}
All norms on the finite dimensional space $\cP_r$ are equivalent.  In what follows, we need good bounds on the constants that appear when comparing norms.   We introduce the normalized $L_p$ (quasi-)norms
\be
\label{p*norm}
\|g\|^*_{L_p(I)}:= |I|^{-1/p}\|g\|_{L_p(I)},
\ee 
for any cube $I\subset \R^d$ and any $g\in L_p(I)$, $0<p\le \infty$. 
We recall the equivalences given in (3.2) of
\cite{DS} and conclude that for any polynomial $P\in\cP_r$ and any $p,q$, we have
\be 
\label{eqPrnorm}
 \|P\|^*_{L_q(I)}\le C \|P\|^*_{L_p(I)},
 \ee 
 with absolute constants,  provided $r$ is fixed,  and $q_0\le q,p$ with $q_0$ fixed.

\subsection{Besov Spaces and Piecewise-Polynomial Approximation}
\label{Bpwp}
 
We recall how membership in $ B_p^s(\Omega)=B_p^s(\bar \Omega)$, $\Omega=(0,1)^d$, is characterized by piecewise polynomial approximation.  
  For each $k\ge 0$, we define $\cD_k$ to be the set of all  dyadic cubes $I\subset \Omega$  of side length $2^{-k}$.  For specificity,  we take these cubes to be half open, i.e., each $I$ has the form $I=[a_1,b_1)\times \dots \times [a_d,b_d)$.  These cubes give a partition of $[0,1)^d$.  We then define 
$\cS_k=\cS_k(r)$ to be the space of all piecewise polynomials of order $r$ that are subordinate to the partition $\cD_k$. More precisely, a  function $S$ is in $\cS_k(r)$ if and only if   for  each $I\in\cD_k$, we have $S=Q_I$ with $Q_I\in\cP_r$.   Each $S$ has a natural extension to $[0,1]^d$.

 Let $0<p\le \infty$ and $s>0$ and $r>s$. A  function 
 $f\in B_p^s (\Omega)$,   if and only if 
 (see~\cite[Lemma~12.1]{BDPS})
 \be 
   \label{Tbesovpwpchar}
   \dist(f,\cS_k(r))_{L_p(\Omega)}\le M2^{-ks},\quad k\ge 0.
   \ee 
   The smallest $M$ for which \eref{Tbesovpwpchar} holds can be taken as a (quasi-)semi-norm for $B_p^s(\Omega)$.

  \subsection{Least-Squares Approximation}
  \label{SS:ls}
  In this section, we construct and analyze a projection onto the space $\cS_k$ which is based upon performing a least squares fit to the function values on each cube $I\in \cD_k$.
Let $k \leq n-r$ be fixed and $I\in \cD_k$. Observe that $I$ is a dyadic cube with side length $2^{-k}$ and so the intersection $I\cap G_n$ of $I$ with the observation grid (see \eref{Gn}) consists of a regular grid with $N_k = 2^{n-k}$ points in each direction. We remark that since $k \leq n-r$, we have $N_k^d > \rho$. 
  
  Since the least squares  procedure is invariant to scaling, we begin with the case  $I = [0,1)^d$ and
 the set of points $\{z_i, i=1,\dots,N^{d}\}$, from the
grid $\Lambda =\{0,1/N,\ldots,1-1/N\}^d$, with some $N^d>\rho$.  
This is the  set of these data sites for performing a least squares fitting
   to a given function $f$ by polynomials from $\cP_r$. We   
    introduce the measure
   \be 
   \label{Dirac}
   \mu:=\frac{1}{N^{d}}\sum_{i=1}^{N^{d}} \delta_{z_i}=\frac{1}{|\Lambda|}\sum_{z_i\in\Lambda} \delta_{z_i},
   \ee 
   where the $\delta_{z_i}$ are Dirac measures 
   at $z_i$. We 
   then have the Hilbert space $L_2(\mu)$, which is identified with functions defined on the data sites $\Lambda$, with the inner product
   \be 
   \label{ip1}
   \langle f,g\rangle:=\frac{1}{|\Lambda|}\sum_{z_i\in \Lambda}f(z_i)g(z_i).
   \ee 
   
   Recall that $\rho=\dim(\cP_r)$  and let $Q_1,\dots,Q_\rho$, be an orthonormal system for $\cP_r$ viewed as a subspace of $L_2(\mu)$. Since $N^d = |\Lambda| > \rho$, for any function $f$ defined on the data sites $\Lambda$, the polynomial
   \be 
   \label{lsP}
   Pf:=\sum_{j=1}^\rho\langle f,Q_j\rangle Q_j,
   \ee 
   is the least squares approximation to $f$ from $\cP_r$. We will need the following technical lemma about the orthonormal basis $Q_1,...,Q_\rho$, which is proved in Appendix~\ref{Lemma22}.
   \begin{lemma}
    \label{L:polynorms}
    Let $I$ be a bounded cube in $\R^d$ and let $\rho:=\dim(\cP_r)$.  Let $\Lambda_I:=\{z_j, \,j=1,\ldots,N^d\}$ be a tensor product grid of $N^d$ points from $I$ with spacing $1/N$,  and   let the $Q_{I,j}$, $j=1,\dots,\rho$, be  an orthonormal basis for $\cP_r$ on $I$, considered as a subspace of the Hilbert space $L_2(\mu_I)$.
    If $N^d>\rho$ and $q_0\le q\le \infty$, then the following (quasi-)norms of a polynomial $Q=\sum_{j=1}^\rho \beta_jQ_{I,j}$ are equivalent  with constants of equivalency depending only on $r,d$ and $q_0>0$ but not depending on $N$ or $q$:
    
    \noindent
    {\rm (i)} \quad $\|Q\|_{L_q(I)}^*$;  
    
    \noindent
    {\rm (ii)} \quad $\|(Q(z_j))\|^*_{\ell_q(\Lambda_I)}:=N^{-d/q}\|(Q(z_j))_{z_j\in\Lambda_I}\|_{\ell_q(\Lambda_I)}$; 
    
    \noindent
    {\rm (iii)} \quad $\|(\beta_j)_{j=1}^\rho\|_{\ell_q}$. 
 
In particular, it follows that 
\be 
\label{QIjbound}
\|Q_{I,j}\|_{L_\infty(I)}\le C, \quad I\in\cD_k,\ j=1,\dots,\rho,
\ee 
where $C$ depends only on $r,d$.
\end{lemma}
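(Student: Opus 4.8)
The plan is to use affine invariance to reduce to the standard situation $I=[0,1]^d$ with $\Lambda_I=\Lambda:=\{0,1/N,\dots,1-1/N\}^d$, then to establish the chain of equivalences $\mathrm{(i)}\asymp\mathrm{(ii)}\asymp\mathrm{(iii)}$, reading off \eqref{QIjbound} along the way. For the reduction, let $\phi\colon[0,1]^d\to I$ be the affine bijection; then $\|Q\|^*_{L_q(I)}=\|Q\circ\phi\|^*_{L_q([0,1]^d)}$, $\|(Q(z_j))\|^*_{\ell_q(\Lambda_I)}=\|((Q\circ\phi)(w_j))\|^*_{\ell_q(\Lambda)}$ with $w_j=\phi^{-1}(z_j)$, and $\mu_I$ corresponds to $\mu$ under $\phi$, so that $Q\mapsto Q\circ\phi$ carries $\{Q_{I,j}\}$ to an orthonormal basis of $\cP_r$ in $L_2(\mu)$ and leaves the coefficients $(\beta_j)$ unchanged. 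Since replacing $q_0$ by $\min\{q_0,1\}$ only enlarges the range of admissible $q$, we may also assume $q_0\le 1$, which lets us invoke \eqref{eqPrnorm} at the exponents $2$, $q$, and $\infty$. Throughout I use that $N\ge r$ --- so that any $r$, hence all $N$, equally spaced points per coordinate direction are unisolvent for $\cP_r$ --- which holds in all our applications, where $N=2^{n-k}$ with $n-k\ge r$.

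The heart of the argument is the Marcinkiewicz--Zygmund equivalence $\mathrm{(i)}\asymp\mathrm{(ii)}$, that is $\|Q\|_{L_q([0,1]^d)}\asymp\big(N^{-d}\sum_j|Q(z_j)|^q\big)^{1/q}$ with constants depending only on $r,d,q_0$. For $\mathrm{(ii)}\lesssim\mathrm{(i)}$, partition $[0,1)^d$ into the $N^d$ congruent half-open subcubes $\{I_j\}$ of side $1/N$, with $I_j$ having lower corner $z_j$; by \eqref{eqPrnorm} on $I_j$ we get $|Q(z_j)|\le\|Q\|_{L_\infty(I_j)}=\|Q\|^*_{L_\infty(I_j)}\le C\|Q\|^*_{L_q(I_j)}=CN^{d/q}\|Q\|_{L_q(I_j)}$, and summing the $q$-th powers over the disjoint cells gives $N^{-d}\sum_j|Q(z_j)|^q\le C^q\|Q\|^q_{L_q}$ (the case $q=\infty$ trivial). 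For the reverse bound $\mathrm{(i)}\lesssim\mathrm{(ii)}$, the delicate direction, to each $I_j$ I attach a grid-aligned cube $I_j^\sharp\subseteq[0,1]^d$ of side $L/N$ with $L=L(r)\ge r$ a fixed integer, chosen so that $I_j\subseteq I_j^\sharp$, the lower corner of $I_j^\sharp$ is a grid point, and $I_j^\sharp$ contains exactly $L^d$ grid points (for cells adjacent to $\partial[0,1]^d$ one shifts $I_j^\sharp$ inward). Rescaling $I_j^\sharp$ affinely onto the unit cube maps $I_j^\sharp\cap\Lambda$ onto the fixed configuration $\{0,1/L,\dots,1-1/L\}^d$, which is unisolvent for $\cP_r$; hence $\|Q\|_{L_\infty(I_j)}\le\|Q\|_{L_\infty(I_j^\sharp)}\le C\max_{z\in I_j^\sharp\cap\Lambda}|Q(z)|$ with $C=C(r,d)$. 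Since $\|Q\|^q_{L_q}=\sum_j\int_{I_j}|Q|^q\le N^{-d}\sum_j\|Q\|^q_{L_\infty(I_j)}$ and every grid point lies in at most $C(r,d)$ of the cubes $I_j^\sharp$, this yields $\|Q\|^q_{L_q}\le C(r,d,q_0)\,N^{-d}\sum_z|Q(z)|^q$; again $q=\infty$ is the same argument with maxima replacing sums.

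Granted $\mathrm{(i)}\asymp\mathrm{(ii)}$, the bound \eqref{QIjbound} is immediate: orthonormality gives $\big(N^{-d}\sum_iQ_{I,j}(z_i)^2\big)^{1/2}=1$, so $\mathrm{(ii)}\asymp\mathrm{(i)}$ at $q=2$ gives $\|Q_{I,j}\|_{L_2([0,1]^d)}\le C$, and then \eqref{eqPrnorm} (comparing the normalized $L_\infty$- and $L_2$-norms on the unit cube) gives $\|Q_{I,j}\|_{L_\infty}\le C=C(r,d)$. For $\mathrm{(i)}\asymp\mathrm{(iii)}$, write $Q=\sum_j\beta_jQ_{I,j}$: the (quasi-)triangle inequality, the bound $\|Q_{I,j}\|_{L_\infty}\le C$, and the presence of only $\rho=\dim\cP_r$ terms give $\|Q\|_{L_q}\le C(r,d,q_0)\|(\beta_j)\|_{\ell_q}$; conversely, Cauchy--Schwarz and orthonormality give $|\beta_j|=|\langle Q,Q_{I,j}\rangle_{L_2(\mu)}|\le\|Q\|_{L_2(\mu)}=\|Q\|^*_{\ell_2(\Lambda)}$, which by $\mathrm{(ii)}\asymp\mathrm{(i)}$ at $q=2$ and then \eqref{eqPrnorm} is $\le C\|Q\|_{L_\infty}\le C\|Q\|_{L_q}$ for every $q\ge q_0$, so taking $\ell_q$-norms over the $\rho$ indices gives $\|(\beta_j)\|_{\ell_q}\le C\|Q\|_{L_q}$. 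Chaining the displayed bounds establishes the lemma.

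The main obstacle is the reverse Marcinkiewicz--Zygmund inequality $\mathrm{(i)}\lesssim\mathrm{(ii)}$: the naive cell-by-cell bound $\int_{I_j}|Q|^q\lesssim N^{-d}|Q(z_j)|^q$ is false (take $Q(z_j)=0$), so one must control $\|Q\|_{L_\infty(I_j)}$ by sample values in an entire neighbourhood of $I_j$, and the subtle point is that the resulting constant must not deteriorate as $N\to\infty$ --- this is exactly why one works with the fixed-size cube $I_j^\sharp$ and rescales it, so that the interpolation estimate $\|Q\|_{L_\infty(I_j^\sharp)}\le C\max_{I_j^\sharp\cap\Lambda}|Q|$ reduces to a single finite-dimensional fact depending only on $r$ and $d$. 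The remaining technicalities --- shifting $I_j^\sharp$ inward for cells meeting $\partial[0,1]^d$, and threading quasi-norm inequalities through the range $0<q<1$ --- are routine and I would handle them without further comment.
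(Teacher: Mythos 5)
Your proof is correct, and the key step — the equivalence $\mathrm{(i)}\asymp\mathrm{(ii)}$ — is handled by a genuinely different mechanism than in the paper. The paper's argument is a quasi-interpolation / perturbation argument: it forms the piecewise-constant sampling $S_N(Q)$, uses Markov's inequality to show $\|Q-S_N(Q)\|_{L_q}\le C(r)N^{-1}\|Q\|_{L_q}$, and concludes the equivalence for $N\ge N_0(r)$ by absorbing the $O(N^{-1})$ error; the finitely many remaining $N\le N_0$ are then treated by a separate finiteness-plus-compactness argument (continuity of the equivalence constants in $1/q$ on the compact interval $[0,1/q_0]$). Your argument is instead a Marcinkiewicz--Zygmund covering argument: you cover each cell $I_j$ by a fixed-size stencil $I_j^\sharp$ of $L^d$ grid points, rescale to a single finite-dimensional unisolvence/interpolation estimate, and sum with bounded overlap. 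This buys you uniform constants for \emph{all} admissible $N$ in one stroke, with no case split at $N_0$ and no continuity-in-$1/q$ argument. Similarly, your route for $\mathrm{(i)}\asymp\mathrm{(iii)}$ is direct (via the bound $\|Q_{I,j}\|_{L_\infty}\le C$ and Cauchy--Schwarz on the coefficients), whereas the paper chains through the $q=2$ case using the separate uniform-in-$q$ equivalences of $\mathrm{(i)}$ and of $\mathrm{(iii)}$.

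One remark on the hypothesis: you correctly work under $N\ge r$ (equivalently $N\ge L(r)$), rather than the lemma's stated $N^d>\rho$. These are not the same, and in fact $N^d>\rho$ alone is not sufficient for $\mathrm{(ii)}$ to be a norm on $\cP_r$ — for instance with $d=2$, $r=5$, $N=4$ one has $N^d=16>15=\rho$, yet the univariate polynomial $x(x-\tfrac14)(x-\tfrac24)(x-\tfrac34)\in\cP_5$ vanishes on the entire $4\times 4$ grid, so the tensor-product grid is not unisolvent and the claimed orthonormal basis does not even exist. The lemma's hypothesis should really read $N\ge r$; this is satisfied in every application in the paper (where $N=2^{n-k}$ with $n-k\ge r$), so nothing downstream is affected, and your formulation is the accurate one.
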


It follows from the bound \eqref{QIjbound} that for a function $f\in C(I)\subset L_2(\mu)$ the linear mapping $f\mapsto Pf$ is a projection onto $\cP_r$ and satisfies
   \be 
\label{Pbounded}
\|Pf\|_{C(I)}\le C\|f\|_{C(I)}
\ee 
for a constant $C$ depending only upon $r$ and $d$, and thus we have
\be 
\label{nb1}
\|f-Pf\|_{C(I)} \le (1+C)\dist(f,\cP_r)_{C(I)},\quad f\in C(I).
\ee 
   
 Let us now consider any cube $I\in \cD_k$.  We rescale the above construction to $I$ and denote by $P_If$ the resulting least squares polynomial fit from $\cP_r$ to this data on $I$, i.e., \eref{lsP} rescaled to $I$. 
 We then define
 \be 
 \label{defSkLS}
 S_kf:=\sum_{I\in \cD_k} (P_If) \chi_I.
 \ee

 \begin{lemma}
     \label{L:LSpwp}
     Let $0<p\le \infty$ and $s>d/p$.  If $f\in B_p^s=B_p^s(\Omega)$, then for each $0 < q\le\infty$, the piecewise polynomial $S_kf$ in \eref{defSkLS} satisfies
 \be 
 \label{Lqapprox1}
 \|f-S_kf\|_{L_q(\Omega)}\le C|f|_{B_p^s(\Omega)} 2^{-k(s-(d/p-d/q)_+) },    \quad \quad k\le n-r,
 \ee
 where $C$ depends only on $p,q,s,d,r$.
 \end{lemma}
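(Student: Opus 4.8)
The plan is to bound the error $\|f - S_k f\|_{L_q(\Omega)}$ cube by cube, using the fact that on each $I \in \cD_k$ the polynomial $P_I f$ is a near best $L_\infty(I)$-approximation to $f$ (by \eqref{nb1}), and then to convert the resulting local $L_\infty$-errors into a global $L_q$-bound via the characterization \eqref{Tbesovpwpchar} of $B_p^s$ by piecewise-polynomial approximation together with a Jackson-type estimate on each cube. First I would fix $I \in \cD_k$ and let $S^*$ be a best (or near best) approximation to $f$ from $\cS_k(r)$ in $L_p(\Omega)$, so that $S^*|_I \in \cP_r$. Then $P_I f - S^*|_I \in \cP_r$, and by Lemma~\ref{L:polynorms} (the equivalence of (i) and (ii), applied on $I$ with the grid $\Lambda_I = I \cap G_n$) we have
\be
\|P_I f - S^*|_I\|_{L_q(I)}^* \le C \|P_I f - S^*|_I\|_{\ell_q(\Lambda_I)}^* \le C \|f - S^*\|_{\ell_\infty(\Lambda_I)} \le C \|f - P_I f\|_{C(I)} + C \|f - S^*\|_{\ell_\infty(\Lambda_I)},
\ee
so it suffices to control $\|f - P_I f\|_{C(I)}$ and then compare $\ell_q$ to $L_q$. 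Since $P_I f$ is a near best $L_\infty(I)$-approximation, $\|f - P_I f\|_{C(I)} \le C\, E_r(f, I)_\infty = C\, E_r(f,I)_\infty$; by Whitney's theorem \eqref{WT} and the embedding $B_p^s \hookrightarrow C$ (using $s > d/p$), one has the local Jackson estimate $E_r(f,I)_\infty \le C\, |I|^{-1/p}\, \omega_r(f, \ell_I)_{L_p(I)}$-type control, more precisely $E_r(f,I)_\infty \lesssim 2^{k d/p}\, \big(E_r(f,I)_p\big)$ after accounting for the Nikolskii-type inequality relating $L_\infty$ and $L_p$ on $\cP_r$ over $I$ (this is \eqref{eqPrnorm} in normalized form).

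Next I would assemble the pieces. Writing $\|f - S_k f\|_{L_q(\Omega)}^q = \sum_{I \in \cD_k} \|f - P_I f\|_{L_q(I)}^q$ (for $q < \infty$; the $q = \infty$ case is a supremum), split $f - P_I f = (f - S^*|_I) + (S^*|_I - P_I f)$. The first term sums to $\|f - S^*\|_{L_q(\Omega)}^q$, which by the embedding $\cS_k \subset L_q$ and the $L_p$ near-best property, together with an inverse/Nikolskii estimate on the piecewise-polynomial space $\cS_k$ (again via \eqref{eqPrnorm} cube by cube), is bounded by $C\, 2^{k d(1/p - 1/q)_+}\, \dist(f, \cS_k(r))_{L_p(\Omega)} \le C\, |f|_{B_p^s}\, 2^{-k(s - d(1/p-1/q)_+)}$ using \eqref{Tbesovpwpchar}. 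For the second term, $\|S^*|_I - P_I f\|_{L_q(I)} \le C |I|^{1/q} \|S^*|_I - P_I f\|_{L_q(I)}^*$, and by the displayed estimate above plus Lemma~\ref{L:polynorms} this is controlled by the local error $E_r(f,I)_\infty$ (on the slightly enlarged cube, using the near-best-on-larger-cubes property \eqref{nearbest2} if needed to absorb $S^*$), which again feeds into the same Whitney/Nikolskii bound and sums to the same rate. Summing the $q$-th powers over the $2^{kd}$ cubes and taking $q$-th roots yields \eqref{Lqapprox1}.

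The main obstacle — and the place where care is genuinely needed — is the $L_\infty$-vs-$L_p$ gain: the least-squares projection is naturally controlled pointwise on the data sites (hence in $C(I)$ via \eqref{Pbounded}–\eqref{nb1}), but the Besov hypothesis only gives $L_p$ control of $\omega_r$, so one must pay a factor $|I|^{-1/p} \sim 2^{kd/p}$ when passing from $E_r(f,I)_p$ to $E_r(f,I)_\infty$ via \eqref{eqPrnorm} applied to the near-best polynomial, and then recover a net factor $2^{-kd/q}$ (or $2^0$) when going back from $\ell_q(\Lambda_I)$ to $L_q(I)$ and summing over cubes. Tracking these exponents is exactly what produces the $(d/p - d/q)_+$ correction in \eqref{Lqapprox1}, and one needs the condition $s > d/p$ precisely to ensure the bound $E_r(f,I)_\infty$ is finite (the pointwise samples make sense) and that $S_k f \to f$. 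A secondary technical point is that $P_I f$ is defined from function values, so when $S^*$ restricted to $I$ is used, one should invoke that a near-best $L_p$ polynomial is near-best in $L_\infty$ on $I$ — which follows from Lemma~3.2 of~\cite{DP} together with \eqref{eqPrnorm} — rather than assuming it directly.
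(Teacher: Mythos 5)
Your proposal contains a genuine gap, and it is worth being precise about where. The paper's own proof is simply a pointer to Theorem~2.1 of~\cite{BDPS}, with the remark that the only property of the projection used there is the $C(I)$-boundedness~\eref{Pbounded}; the argument in~\cite{BDPS} is a \emph{telescoping} argument across dyadic scales. Your reconstruction instead tries to do everything at the single scale $k$, and in doing so misuses Nikolskii-type inequalities in two places.

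The central problem is the claim ``$E_r(f,I)_\infty \lesssim 2^{kd/p}\,E_r(f,I)_p$ after accounting for the Nikolskii-type inequality relating $L_\infty$ and $L_p$ on $\cP_r$'' (and the companion claim that $\|f-S^*\|_{L_q}\lesssim 2^{kd(1/p-1/q)_+}\dist(f,\cS_k)_{L_p}$ ``via \eref{eqPrnorm} cube by cube''). Inverse/Nikolskii estimates such as~\eref{eqPrnorm} compare different norms of a \emph{single} polynomial (or piecewise polynomial); they say nothing about the residual $f-Q$, which is not a polynomial. The inequality you assert is in fact false in general: with $I=[0,1]^d$ and $f$ the indicator of a ball of radius $\epsilon$ inside $I$, one has $E_r(f,I)_\infty\asymp 1$ while $E_r(f,I)_p\lesssim \epsilon^{d/p}\to 0$, and $|I|^{-1/p}=1$ does not save you. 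So Whitney plus polynomial Nikolskii cannot deliver the $L_p\to L_\infty$ gain; the hypothesis $s>d/p$ has to be injected in a more structural way than an after-the-fact appeal to the embedding.

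What does work, and what~\cite{BDPS} (hence the paper) actually does, is to fix near-best $L_p$ piecewise-polynomial approximants $S_j^*\in\cS_j$ at every scale $j\geq k$, write $f-S_k^*=\sum_{j\geq k}(S_{j+1}^*-S_j^*)$ on $I$, and then apply Nikolskii only to the \emph{differences} $S_{j+1}^*-S_j^*$, which genuinely are piecewise polynomials. The Besov characterization~\eref{Tbesovpwpchar} gives $\|S_{j+1}^*-S_j^*\|_{L_p}\lesssim |f|_{B_p^s}2^{-js}$, Nikolskii costs a factor $2^{jd/p}$ to pass to $L_\infty$ cube by cube, and the resulting geometric series converges precisely because $s>d/p$. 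The $C(I)$-boundedness~\eref{Pbounded} then lets you carry this $L_\infty$ control through the projection $P_I$. Without this multiscale decomposition you simply have no mechanism to turn $L_p$-smoothness of $f$ into the pointwise control on the data sites that $P_I$ requires.

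A secondary, smaller issue: your displayed chain ends with ``$\leq C\|f-P_If\|_{C(I)}+C\|f-S^*\|_{\ell_\infty(\Lambda_I)}$, so it suffices to control $\|f-P_If\|_{C(I)}$.'' The last inequality is trivially true (it adds a nonnegative term to the right-hand side), but the stated conclusion does not follow from it, since $\|f-S^*\|_{\ell_\infty(\Lambda_I)}$ is still there and is itself an $L_\infty$-type quantity in need of control. The decomposition $f-P_If=(f-S^*|_I)+(S^*|_I-P_If)$ is the right starting point, and your use of Lemma~\ref{L:polynorms} on the polynomial difference $S^*|_I-P_If$ is fine; it is the passage from $L_p$-smoothness of $f$ to $C(I)$- (or $\ell_\infty(\Lambda_I)$-) control of $f-S^*$ that is missing, and that is exactly the step the telescoping argument supplies.
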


 \begin{proof}
 The proof is similar to the proof of Theorem 2.1 in \cite{BDPS}, which applies to interpolating polynomials instead of the least squares polynomial fit, and we do not repeat it in detail here. Indeed, the only property of the interpolating operator which was used in \cite{BDPS} was boundedness in $C(I)$, which the operator $S_k$ also has, see  \eref{Pbounded}.
 \end{proof}

\subsection{Multiscale Decompositions}
\label{SS:mspp}
Let $\Omega=(0,1)^d$ and let us fix $f\in B_p^s(\Omega)$ with $s>d/p$.  Recall that each such function is uniformly continuous and hence has a natural extension to $\overline \Omega$.  We assume that the data sites are the tensor product grid $G_n$ \eref{Gn},  $n\ge 1$, and therefore $m=|G_n|=2^{nd}$.  Note that  the number of data sites in any dyadic cube $I\in \cD_k$ is $2^{(n-k)d}$ for $0\le k\le n$.  We recall that $\rho:=\dim(\cP_r)$. We need to ensure that each $I\in\cD_k$ contains more than $\rho$ data points for every level $k$. This can be done by restricting $k$ to be a constant less than $n$. For simplicity, we will restrict $k\le n-r$, which one can easily verify that is enough. It follows that the least squares operator $P_I$ from the previous subsection is well defined
and is represented by \eref{lsP}. We let 
\be 
\label{defS}
S_k:=S_kf:= \sum_{I\in \cD_{k}} (P_If)\chi_I \quad {\rm and}\quad  \widetilde S_k:=\sum_{I\in \cD_{k}} (P_Iy)\chi_I,\quad k\le n-r,
\ee 
where $y$ is the vector of noisy data observations $(y_i)_{ x_i\in \Lambda_I}$ from \eqref{eq:observations} (which can be viewed as an element of $L_2(\mu)$). 

If we define the piecewise polynomials
\begin{equation}
T_0:=S_0, \ \widetilde T_0:=\widetilde S_0, \quad   T_k:=S_k-S_{k-1}, \quad \widetilde T_k:=\widetilde S_k-\widetilde S_{k-1}, \quad 1\le k\le n-r, 
\end{equation}
then we have the following multiscale  representations

\be 
\label{repf} 
S_k=\sum_{j=0}^k T_j \quad {\rm and}\quad \widetilde S_k=\sum_{j=0}^k \widetilde T_j, \quad 0\le k\le n-r,
\ee
which we shall exploit in our recovery algorithms.  
The following lemma holds.
\begin{lemma}
\label{MainL}
For every $0\le k\le n-r$ we have the representation
\be
\label{fin}
 T_k:=\sum_{I\in\cD_k} \left[\sum_{j=1}^\rho  c_{I,j}Q_{I,j}\right]\chi_I,
\ee
where the vector 
 $\nu_k$ of coefficients defined by
\be 
\label{defnuk}
\nu_k:=(c_{I,j}),\quad I\in \cD_k,\quad j=1,\ldots,\rho, 
\ee
has length $L_k:=\rho 2^{kd}$, and its $\ell_p$ norm satisfies
\be 
 \label{know10}
 \|\nu_k\|_{\ell_p}\le C2^{-k(s-d/p)},\quad 0\le k\le n-r,
 \ee
with $C$ depending only upon $s$, $p$, $r$ and $d$. 
\end{lemma}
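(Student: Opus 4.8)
The representation \eqref{fin} and the value $L_k=\rho 2^{kd}$ are immediate from the construction. I would first note that $T_k$ is a piecewise polynomial subordinate to $\cD_k$: for $k\ge 1$, each cube of $\cD_{k-1}$ is a disjoint union of $2^d$ cubes of $\cD_k$, so $\cS_{k-1}(r)\subset\cS_k(r)$, and since $S_k\in\cS_k(r)$ and $S_{k-1}\in\cS_{k-1}(r)$ by construction, $T_k=S_k-S_{k-1}\in\cS_k(r)$; for $k=0$, $T_0=S_0\in\cS_0(r)$ trivially. Hence on each $I\in\cD_k$ the restriction $T_k|_I$ belongs to $\cP_r$, and expressing it in the $L_2(\mu_I)$-orthonormal basis $Q_{I,1},\dots,Q_{I,\rho}$ as $\sum_{j=1}^\rho c_{I,j}Q_{I,j}$ gives \eqref{fin}; counting $\rho$ coefficients on each of the $|\cD_k|=2^{kd}$ cubes yields the stated length.

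The content of the lemma is the estimate \eqref{know10}, which I would prove in two steps. \emph{Step 1 (from coefficients to the $L_p$-norm of $T_k$).} Fix $I\in\cD_k$. Since $k\le n-r$, the grid $\Lambda_I=I\cap G_n$ consists of $N_k^d=2^{(n-k)d}>\rho$ points in tensor-product position, so Lemma \ref{L:polynorms} applies on $I$ with constants that do not depend on $I$ or $k$. The equivalence of items (i) and (iii) there gives $\|(c_{I,j})_{j=1}^\rho\|_{\ell_p}\le C\|T_k\|^*_{L_p(I)}=C|I|^{-1/p}\|T_k\|_{L_p(I)}=C\,2^{kd/p}\|T_k\|_{L_p(I)}$. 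Raising this to the $p$-th power and summing over $I\in\cD_k$ (replacing the sum by a maximum and reading $2^{kd/p}$ as $1$ when $p=\infty$), and using that the $\ell_p$-norm of the full coefficient vector $\nu_k$ is the $\ell_p$-aggregate of its per-cube blocks, one obtains $\|\nu_k\|_{\ell_p}\le C\,2^{kd/p}\|T_k\|_{L_p(\Omega)}$.

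\emph{Step 2 (size of $\|T_k\|_{L_p(\Omega)}$).} For $k\ge1$, write $T_k=(f-S_{k-1})-(f-S_k)$ and apply the (quasi-)triangle inequality in $L_p(\Omega)$ together with Lemma \ref{L:LSpwp} in the case $q=p$, for which the exponent $s-(d/p-d/q)_+$ equals $s$; this gives $\|T_k\|_{L_p(\Omega)}\le C\big(\|f-S_{k-1}\|_{L_p(\Omega)}+\|f-S_k\|_{L_p(\Omega)}\big)\le C|f|_{B_p^s(\Omega)}\,2^{-ks}$. For $k=0$, $T_0=S_0$ and $\|S_0\|_{L_p(\Omega)}\le\|f\|_{L_p(\Omega)}+\|f-S_0\|_{L_p(\Omega)}\le C\|f\|_{B_p^s(\Omega)}$, again by Lemma \ref{L:LSpwp}. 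Combining the two steps yields $\|\nu_k\|_{\ell_p}\le C\,2^{kd/p}\|T_k\|_{L_p(\Omega)}\le C\|f\|_{B_p^s(\Omega)}\,2^{-k(s-d/p)}$, which is \eqref{know10} once we recall that $f$ is taken in the unit ball, so $\|f\|_{B_p^s(\Omega)}\le1$.

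\textbf{Expected main obstacle.} There is no genuinely hard step; the difficulty is bookkeeping of uniform constants. The essential point is that the norm-equivalence constants supplied by Lemma \ref{L:polynorms} are independent of the particular cube $I\in\cD_k$ and of the scale $k$ (which is exactly the scale-invariant form in which that lemma is stated, valid as soon as $N_k^d>\rho$, i.e. for $k\le n-r$), and that the constant in Lemma \ref{L:LSpwp} is likewise scale-free. Secondary nuisances are the quasi-norm regime $0<p<1$ (where the triangle inequality costs a $p$-dependent factor and one sums $p$-th powers), the endpoint $p=\infty$ (where $d/p=0$ and sums become maxima), and the separate treatment of the coarsest scale $k=0$, for which the full Besov norm rather than the semi-norm is needed.
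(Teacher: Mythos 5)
Your proof is correct and follows essentially the same route as the paper's: you use Lemma~\ref{L:polynorms} to pass from the per-cube coefficient blocks to normalized $L_p$ norms of $T_k$ on each $I$, sum over $\cD_k$ to get $\|\nu_k\|_{\ell_p}\le C\,2^{kd/p}\|T_k\|_{L_p(\Omega)}$, and then bound $\|T_k\|_{L_p(\Omega)}$ via $T_k=(f-S_{k-1})-(f-S_k)$ together with Lemma~\ref{L:LSpwp} at $q=p$. You are slightly more explicit than the paper on two small points it leaves implicit --- the quasi-triangle inequality step and the separate $k=0$ case, where the full Besov norm (not only the semi-norm) enters --- but these are clarifications of the same argument, not a different approach.
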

\begin{proof}
We have  
\be 
\label{twopwp}
T_k=S_k-S_{k-1}=\sum_{I\in\cD_k}R_I\chi_I,\quad 1\le k\le n-r,\quad \hbox{with}
\quad 
R_I:=P_I-P_{I'},\quad   I \in \cD_k,
\ee
where for each $I\in \cD_k$, we denote by $I'\in \cD_{k-1}$ the parent of $I$, i.e., the cube in $\cD_{k-1}$ that contains $I$. Note that $T_k$ clearly lies in the space $\cS_k$ of piecewise polynomials on the grid $\cD_k$. It follows from Lemma \ref{L:LSpwp} with $q = p$ that
\be
    \|T_k\|_{L_p(\Omega)} \leq C|f|_{B_p^s(\Omega)} 2^{-ks}.
\ee
Now we use Lemma \ref{L:polynorms} to see that
\begin{equation}
    \|\nu_k\|_{\ell_p}^p = \sum_{I\in \cD_k} \sum_{j=1}^\rho |c_{I,j}|^p \leq C\sum_{I\in \cD_k}(\|T_k\|_{L_p(I)}^*)^p = C2^{kd} \|T_k\|_{L_p(\Omega)}^p,
\end{equation}
from which \eqref{know10} follows.
\end{proof}

 Similarly, one can show  the following lemma for $\widetilde S_{n-r}$.
\begin{lemma}
\label{MainLtilde}
We have the representation
\be
\label{fintilde}
\widetilde T_k:=\sum_{I\in\cD_k} \left[\sum_{j=1}^\rho  c^*_{I,j}Q_{I,j}\right]\chi_I,
\ee
where 
\begin{equation}
c^*_{I,j}:=c_{I,j}+\eta_{I,j}, \quad I\in \cD_k,\ 0\le k\le n-r,\ j=1,\dots,\rho, 
\end{equation}
are the noisy observation of the true  $c_{I,j}$'s,
polluted by the additive  Gaussian $\cN(0,\sigma_{I,j}^2)$ noise $\eta_{I,j}$ with variance
\be 
\label{varianceIj}
0\le \sigma_{I,j}^2\le C_\rho 2^{-(n-k)d}\sigma^2, \quad I\in \cD_k,\ j=1,\dots,\rho.
\ee 
\end{lemma}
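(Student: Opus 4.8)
The plan is to transfer the argument of Lemma~\ref{MainL} to the noisy setting by exploiting that the least-squares operators $P_I$ are linear in the data. Writing the observations as $y = f + \eta$ on the grid $G_n$, where the noise vector $\eta = (\eta_i)$ is regarded as a function on $G_n$, linearity gives $P_I y = P_I f + P_I \eta$ for every cube $I$, hence $\widetilde S_k = S_k + S_k\eta$ and $\widetilde T_k = T_k + T_k\eta$, where $S_k\eta := \sum_{I\in\cD_k}(P_I\eta)\chi_I$ and $T_k\eta := S_k\eta - S_{k-1}\eta$, whose restriction to each $I\in\cD_k$ equals $P_I\eta - (P_{I'}\eta)|_I$ with $I'\in\cD_{k-1}$ the parent of $I$ (and, when $k=0$, $\cD_0 = \{[0,1)^d\}$, $\widetilde T_0 = \widetilde S_0$, so there is no $P_{I'}$-term). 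On a fixed $I\in\cD_k$ the function $T_k\eta|_I = P_I\eta - (P_{I'}\eta)|_I$ lies in $\cP_r$, so expanding it in the $L_2(\mu_I)$-orthonormal basis $Q_{I,1},\dots,Q_{I,\rho}$ as $T_k\eta|_I = \sum_{j=1}^\rho \eta_{I,j}Q_{I,j}$ and combining with $T_k|_I = \sum_j c_{I,j}Q_{I,j}$ from Lemma~\ref{MainL} gives at once $\widetilde T_k|_I = \sum_j (c_{I,j}+\eta_{I,j})Q_{I,j}$, i.e.\ $c^*_{I,j} = c_{I,j} + \eta_{I,j}$. Since $\eta_{I,j}$ is a fixed linear functional of the i.i.d.\ Gaussians $\eta_i$, it is automatically a centered Gaussian, and only the variance bound remains.

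For the variance I would use that $P_I$ is the $L_2(\mu_I)$-orthogonal projection onto $\cP_r$ and $Q_{I,j}\in\cP_r$, so $\eta_{I,j} = \langle T_k\eta|_I, Q_{I,j}\rangle_{\mu_I} = \langle\eta, Q_{I,j}\rangle_{\mu_I} - \langle (P_{I'}\eta)|_I, Q_{I,j}\rangle_{\mu_I} =: A_{I,j} - B_{I,j}$. Since $\mu_I$ is the uniform probability measure on the $|\Lambda_I| = 2^{(n-k)d}$ observation sites in $I$ and $\langle Q_{I,j},Q_{I,j}\rangle_{\mu_I} = 1$, a one-line computation using independence and the mean-zero property of the $\eta_i$ gives $\mathbb{E}[A_{I,j}^2] = \sigma^2/|\Lambda_I| = 2^{-(n-k)d}\sigma^2$. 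For $B_{I,j}$ I would expand $P_{I'}\eta = \sum_{l=1}^\rho d_{I',l}Q_{I',l}$ with $d_{I',l} := \langle\eta, Q_{I',l}\rangle_{\mu_{I'}}$, so that $B_{I,j} = \sum_{l=1}^\rho d_{I',l}\,\langle Q_{I',l}, Q_{I,j}\rangle_{\mu_I}$; the bound \eqref{QIjbound} applied on $I'$ and on $I\subset I'$ gives $|\langle Q_{I',l}, Q_{I,j}\rangle_{\mu_I}| \le \|Q_{I',l}\|_{L_\infty(I)}\,\|Q_{I,j}\|_{L_\infty(I)} \le C$, and the same computation as for $A_{I,j}$, now on $I'$, gives $\mathbb{E}[d_{I',l}^2] = \sigma^2/|\Lambda_{I'}| = 2^{-(n-k+1)d}\sigma^2 \le 2^{-(n-k)d}\sigma^2$; a Cauchy--Schwarz estimate over the length-$\rho$ sum then yields $\mathbb{E}[B_{I,j}^2] \le C_\rho\, 2^{-(n-k)d}\sigma^2$.

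Combining the two bounds via $\mathbb{E}[(A_{I,j}-B_{I,j})^2] \le 2\,\mathbb{E}[A_{I,j}^2] + 2\,\mathbb{E}[B_{I,j}^2]$ gives $\sigma_{I,j}^2 = \mathbb{E}[\eta_{I,j}^2] \le C_\rho\, 2^{-(n-k)d}\sigma^2$ with $C_\rho$ depending only on $\rho$, hence only on $r$ and $d$; the inequality $\sigma_{I,j}^2 \ge 0$ is trivial and all $\eta_{I,j}$ vanish when $\sigma = 0$. The case $k = 0$ is the same argument with the $B_{I,j}$-term absent. I expect the one genuinely delicate point to be the estimate for $B_{I,j}$: the parent-scale polynomial $P_{I'}\eta$ lives naturally on the coarser grid $\Lambda_{I'}$ and has to be re-expressed in the child-cube orthonormal basis $\{Q_{I,j}\}$, and one must check that this change of grid and of basis does not inflate the variance — this is exactly where the scale-uniform bound \eqref{QIjbound} from Lemma~\ref{L:polynorms} is needed. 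The remaining ingredients (Lemma~\ref{MainL}, linearity of $P_I$, and the elementary Gaussian moment computations) are routine.
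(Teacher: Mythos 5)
Your proof is correct and follows essentially the same route as the paper's: both decompose $\eta_{I,j}$ into the ``own-scale'' contribution $\tilde\eta_{I,j}=\langle\eta,Q_{I,j}\rangle_{\mu_I}$ and the ``parent-scale'' contribution $\sum_l\tilde\eta_{I',l}\langle Q_{I',l},Q_{I,j}\rangle_{\mu_I}$, and both invoke \eqref{QIjbound} to control the cross-Gram coefficients. The only cosmetic difference is bookkeeping — the paper consolidates everything into a single linear combination $\sum_i\beta_{i,j}\eta_i$ and bounds $|\beta_{i,j}|$ directly, whereas you bound $\mathbb E[A_{I,j}^2]$ and $\mathbb E[B_{I,j}^2]$ separately and combine via $\mathbb E[(A-B)^2]\le 2\mathbb E[A^2]+2\mathbb E[B^2]$ — but the substance is identical.
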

\begin{proof}
Using the same notation and analysis as in Lemma \ref{MainL} we have
\be 
\label{twopwptilde}
\widetilde T_k=\widetilde S_k-\widetilde S_{k-1}=\sum_{I\in\cD_k}\widetilde R_I\chi_I, \quad 1\le k\le n-r,
\quad \hbox{with}\quad 
  \widetilde R_I:=\widetilde P_I-\widetilde P_{I'}, \quad I \in \cD_k,
\ee
where for each $I\in \cD_k$, 
\be 
\label{recall1}
\widetilde P_I= \frac{1}{\#(\Lambda_I)}\sum_{j=1}^\rho \left[\sum_{ x_i \in \Lambda_I} (f( x_i)+\eta_i)Q_{I,j}( x_i)\right ]Q_{I,j}=P_I+\sum_{j=1}^\rho   \tilde\eta_{I,j}Q_{I,j}.
\ee 
Here
\be 
\label{PIrep1}
 \widetilde\eta_{I,j}:= \frac{1}{\#(\Lambda_I)}   \sum_{ x_i \in \Lambda_I} Q_{I,j}(  x_i) \eta_i, \quad I\in\cD_k, \ j=1\dots,\rho,
\ee 
are random variables.  
 This gives
 \be 
 \label{repPRItilde}
\widetilde R_I=  
 \sum_{j=1}^\rho c_{I,j} Q_{I,j}+   \sum_{j=1}^\rho \eta_{I,j}Q_{I,j}=\sum_{j=1}^\rho c^*_{I,j} Q_{I,j},
 \ee 
where
\begin{equation}
 \eta_{I,j}:=\widetilde \eta_{I,j} -  \sum_{i=1}^\rho\widetilde\eta_{I',i}\langle  Q_{I',i},Q_{I,j}\rangle_I.
\end{equation}
Therefore \eref{fintilde} follows from \eref{repPRItilde} and \eref{twopwptilde}.

   Using \eref{PIrep1} and Lemma \ref{L:polynorms}, we  find that
\begin{equation}
  \eta_{I,j}=\sum_{i:\,x_i\in\Lambda_{I'}}  \beta_{i,j}\eta_i,
\quad \hbox
{where}
\quad 
 |\beta_{i,j}|\le C_\rho \#(\Lambda_I)^{-1}, \quad I\in\cD_k, \ j=1,\dots,\rho.
\end{equation}
   It follows that each of the $\eta_{I,j}$ is a Gaussian $\cN(0,\sigma_{I,j}^2)$ with variance
\begin{equation}
0\le \sigma_{I,j}^2\le C_\rho 2^{-(n-k)d}\sigma^2, \quad I\in \cD_k,\ j=1,\dots,\rho,
\end{equation}
because $\#(\Lambda_I)=2^{(n-k)d}$. 
\end{proof}

\section {A Numerical Recovery Algorithm}
\label{S:NOR}
In this section, we fix $m,\sigma,s,p,q$, and $d$ and describe our algorithm $A$ for approximating $f\in  B_p^s(\Omega)$ from the noisy data $y$, see \eref{eq:observations}. We do this under the assumption that the data sites $\cX=G_n$, that is, they  lie on the tensor product grid described in \eref{Gn} consisting of  $m=2^{nd}$ points.  Our algorithm creates  a function $\hat f:=A(y)$ that approximates $f$ in the $L_q(\Omega)$ norm with $q\in [1,\infty)$. 

 {\bf Step 0:}  If $\sigma^2\ge m$, we define $A(y):=0$ for all $y$.  If $\sigma^2<m$, we proceed to the following two steps.

{\bf Step 1:} From the noisy observations $\{y_i\}$, we construct the polynomials $\widetilde P_I$, $I\in\cD_k$, $k\le n-r$, using least squares approximation from $\cP_r$. 
 This enables us to compute the polynomials $\widetilde R_I$, see \eref{repPRItilde}, for $I\in \cD_k$,  $0\le k\le n-r$,
 \be 
 \label{RItilde}
 \widetilde R_I=\widetilde P_I-\widetilde P_{I'}= \sum_{j=1}^\rho c_{I,j}^*Q_{I,j}, \quad \hbox{where}\quad c^*_{I,j}:= c_{I,j}+\eta_{I,j},
 \ee 
and calculate the coefficients    $\{c_{I,j}^*\}$. Recall that, see Lemma \ref{MainLtilde},
 \be 
\label{deftildeTk}
\widetilde T_k=\sum_{I\in\cD_k} \widetilde R_I\chi_I=\sum_{I\in\cD_k} \left[\sum_{j=1}^\rho c^*_{I,j}Q_{I,j}\right]\chi_I.
\ee

{\bf Step 2:}  For each  dyadic level $k$  and each index pair $(I,j)$ with $I\in\cD_k$, we shall use
the, calculated in Step 1 coefficients,  $\{c_{I,j}^*\}$ to derive new  coefficients $\hat c_{I,j}$, $I\in \cD_k$, $j=1,\dots,\rho$, which approximate the true coefficients $c_{I,j}$ at this dyadic level. The method for calculating the $\hat{c}_{I,j}$'s from the noisy observations $c_{I,j}^*$ (which will be hard-thresholding)  will be described and analyzed in  \S\ref{S:fd}. This part of the algorithm requires the a priori knowledge of the noise level $\sigma$. Notice that the $\hat c_{I,j}$'s will be random variables. We then define the piecewise polynomial function
\be  
\label{defhatTk}
\hat T_k=\sum_{I\in\cD_k} \left[\sum_{j=1}^\rho \hat c_{I,j}Q_{I,j}\right]\chi_I,
\ee 
and consider
$\hat f:=A(y):=\sum_{k=0}^{n-r} \hat T_k$,
to be  our approximation to $f$.

\subsection{Preliminary Analysis}
To analyze the approximation error in $L_q$ of our algorithm, we use Lemma \ref{L:LSpwp} and Lemma \ref{MainL} to obtain the estimate.
\begin{eqnarray}
\label{bound1}
\|f-\hat f\|_{L_q(\Omega)}&\le &  \|f-S_{n-r}\|_{L_q(\Omega)}  +\sum_{k=0}^{n-r}\|T_k-\hat T_k\|_{L_q(\Omega)}
\nonumber \\
&\le&  C2^{-(n-r)(s-(d/p-d/q)_+)} +\sum_{k=0}^{n-r}\|T_k-\hat T_k\|_{L_q(\Omega)}\nonumber\\
&\le&  Cm^{-(s/d-(1/p-1/q)_+)} +\sum_{k=0}^{n-r}\|T_k-\hat T_k\|_{L_q(\Omega)}.
\end{eqnarray} 

In order to bound $\|T_k-\hat T_k\|_{L_q(\Omega)}$, we will use the inequality
\be 
\label{kerror} 
\|T_k-\hat T_k\|^q_{L_q(\Omega)}\le C\sum_{I\in\cD_k}\left[\sum_{j=1}^\rho | c_{I,j}-\hat c_{I,j}|^q\right] |I|=C2^{-kd}\sum_{I\in\cD_k}\left[\sum_{j=1}^\rho | c_{I,j}-\hat c_{I,j}|^q\right].
\ee
Next, we organize our notation as follows. Recall that $\nu_k$ was defined in \eqref{defnuk} as the vector of coefficients $c_{I,j}$, $I\in\cD_k$, $j=1,\dots,\rho$, in the representation of $T_k$ at level $k$. In going forward, we use the notation 
\begin{equation}
\nu^*_k:=(c^*_{I,j}),\quad \hbox{and}\quad 
\eta^*_k:=(\eta_{I,j}),
\end{equation}
so that   
\begin{equation}
\nu_k^* = \nu_k + \eta^*_k. 
\end{equation}
The vector 
 $\hat \nu_k$ is our approximation (which we have not yet defined)  to $\nu_k$ obtained from the noisy observation $\nu_k^*$. 
For a finite vector, say  $v=(v_1, \ldots,v_L)\in \mathbb{R}^L$, we introduce  the weighted  $\ell_q$-norm, given by
\be
   \|v\|^*_q := \left(\frac{1}{L}\sum_{i=1}^L |v_i|^q\right)^{1/q}.
\ee
Utilizing that $\nu_k$ and $\hat \nu_k$  are vectors of length $L_k = \rho2^{kd}$, we rewrite the bound \eqref{kerror} as 
\be
\label{q}
    \|T_k-\hat T_k\|_{L_q(\Omega)}\le C\|\nu_k - \hat{\nu}_k\|^*_{q}.
\ee
We want to define $\hat\nu_k$ to make \eref{q} small.
Putting \eref{q} together with \eqref{bound1} results in  
\be\label{f-error}
\|f-\hat f\|_{L_q(\Omega)}\le C\left(m^{-\frac{s}{d} +(\frac{1}{p}-\frac{1}{q})_+} + \sum_{k=0}^{n-r}\|\nu_k - \hat{\nu}_k\|^*_{q}\right).
\ee
In the next sections, we  explain how we obtain $\hat\nu_k$ by   hard-thresholding the entries in $\nu_k^*$  and bound the resulting error $\|\nu_k - \hat{\nu}_k\|^*_{q}$ in probability.

\section{Finite-Dimensional Estimates}
\label{S:fd}
Here, we study the problem of recovering a vector $v\in \mathbb{R}^L$ from noisy observations $v^* = v + \xi$ with error measured in the norm $\|\cdot\|^*_q$ with $1\leq q<\infty$. We assume that each entry $\xi_i$ of the noise vector $\xi$ is a mean zero Gaussian with variance $\sigma^2_i$ bounded by $\tilde\sigma^2 \geq  0$. In this analysis, we allow the noise in the different components to potentially be correlated.   We will later use the results in this section to bound the error terms $\|\nu_k - \hat{\nu}_k\|^*_{q}$ in \eqref{f-error}. 

The problem of recovering $v$ has been studied before in the literature (see for instance \cite{donoho1990minimax,donoho1994minimax}). One popular method for estimating $v$ is hard-thresholding, defined by
\be\label{hard-thresholding-1092}
    \hat v_i = \mathrm{thresh}_\lambda(v^*_i) := \begin{cases}
                v^*_i, & |v^*_i| > \lambda,\\
                0, & |v^*_i| \leq \lambda,
              \end{cases}
\ee
where $\lambda \geq 0$ is a parameter to be chosen. The key property of hard-thresholding that we are going to use is stated in  the next  simple Lemma (see Lemma 2 in \cite{DeJu}). As remarked in \cite{DeJu,donoho1994ideal,donoho1994minimax,donoho1990minimax}, any coordinatewise estimation rule satisfying the bound of the Lemma, including the popular soft-thresholding rule, can be used to obtain similar guarantees.

\begin{lemma}\label{threshold-error-lemma}
    Let $x,\,\epsilon\in \mathbb{R}$. Then for any $\lambda \geq 0$,  we have the estimate
    \begin{equation}\label{basic-thresholding-bound}
        |\mathrm{thresh}_\lambda(x + \epsilon) - x| \leq 3(\min\{|x|,\lambda\} + |\mathrm{thresh}_{\lambda/2}(\epsilon)|).        
    \end{equation}
\end{lemma}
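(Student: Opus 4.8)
The plan is to dispatch this by a short case analysis on where $x+\epsilon$ falls relative to the threshold $\lambda$, together with a secondary split on the size of $|x|$ versus $\lambda$. The quantity on the left is either $|x|$ (if $\mathrm{thresh}_\lambda(x+\epsilon)=0$) or $|\epsilon|$ (if $\mathrm{thresh}_\lambda(x+\epsilon)=x+\epsilon$), so in both cases the left side is bounded by $\min\{|x|,|x+\epsilon|\}+\ldots$ or directly by $|\epsilon|$; the work is to convert these into the claimed right-hand side $3(\min\{|x|,\lambda\}+|\mathrm{thresh}_{\lambda/2}(\epsilon)|)$.

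First I would handle the case $|x+\epsilon|\le\lambda$, so $\hat x := \mathrm{thresh}_\lambda(x+\epsilon)=0$ and the left side is $|x|$. If $|x|\le\lambda$ then $|x|=\min\{|x|,\lambda\}$ and we are done with room to spare. If $|x|>\lambda$, then from $|x+\epsilon|\le\lambda<|x|$ we get $|\epsilon|\ge|x|-|x+\epsilon|\ge|x|-\lambda>0$; in particular $|\epsilon|>\lambda-\lambda/2=\lambda/2$ is not immediate, so instead I note $|x|\le|x+\epsilon|+|\epsilon|\le\lambda+|\epsilon|$, hence $|x|\le\lambda+|\epsilon|$. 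To finish this subcase I want $|\epsilon|\le 2|\mathrm{thresh}_{\lambda/2}(\epsilon)|$ whenever $|\epsilon|>\lambda/2$ (which follows since then $\mathrm{thresh}_{\lambda/2}(\epsilon)=\epsilon$, giving equality), and otherwise $|\epsilon|\le\lambda/2$ forces (using $|x|>\lambda$) a contradiction-free bound $|x|\le\lambda+\lambda/2\le 2\lambda$, wait — rather: $|x|\le\lambda+|\epsilon|\le\lambda+\lambda/2\le(3/2)\lambda\le 3\min\{|x|,\lambda\}$ since $\min\{|x|,\lambda\}=\lambda$ here. So in every branch the $|x|$ on the left is $\le\lambda+|\epsilon|\le 3(\min\{|x|,\lambda\}+|\mathrm{thresh}_{\lambda/2}(\epsilon)|)$.

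Next, the case $|x+\epsilon|>\lambda$, so $\hat x=x+\epsilon$ and the left side is exactly $|\epsilon|$. If $|\epsilon|>\lambda/2$ then $|\mathrm{thresh}_{\lambda/2}(\epsilon)|=|\epsilon|$ and the bound is trivial. If $|\epsilon|\le\lambda/2$, then $\mathrm{thresh}_{\lambda/2}(\epsilon)=0$, so I need $|\epsilon|\le 3\min\{|x|,\lambda\}$; here I use $|x|\ge|x+\epsilon|-|\epsilon|>\lambda-\lambda/2=\lambda/2$, so $\min\{|x|,\lambda\}>\lambda/2\ge|\epsilon|$, which gives even $|\epsilon|<\min\{|x|,\lambda\}$. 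Assembling the cases yields \eqref{basic-thresholding-bound} (the constant $3$ is not tight, but it comfortably covers every branch).

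I do not expect a genuine obstacle here — the statement is elementary and the only mild subtlety is bookkeeping the inequality $\min\{|x|,\lambda\}$ correctly in the branch $|x|>\lambda$, $|x+\epsilon|\le\lambda$, where one must pass through $|x|\le\lambda+|\epsilon|$ rather than bounding $|x|$ directly by $\min\{|x|,\lambda\}=\lambda$. The "hardest" part is merely making sure each of the four leaf cases is closed with the single uniform constant $3$, and that the degenerate possibility $\lambda=0$ (where $\mathrm{thresh}_\lambda$ and $\mathrm{thresh}_{\lambda/2}$ are both the identity) is also covered, which it is since then the left side is $|\epsilon|=|\mathrm{thresh}_0(\epsilon)|$.
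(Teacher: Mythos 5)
Your proof is correct; every branch closes with the constant $3$, and the $\lambda=0$ edge case is handled. However, your case decomposition differs from the paper's. You branch first on whether $\mathrm{thresh}_\lambda(x+\epsilon)$ equals $0$ or $x+\epsilon$ (i.e.\ on $|x+\epsilon|\lessgtr\lambda$), so that the left side becomes $|x|$ or $|\epsilon|$ explicitly, and then sub-branch on $|x|\lessgtr\lambda$ and $|\epsilon|\lessgtr\lambda/2$. The paper instead branches directly on the sizes of $|x|$ and $|\epsilon|$ relative to $\lambda/2$: in the case $|x|\le\lambda/2$ and $|\epsilon|<\lambda/2$ it deduces $\mathrm{thresh}_\lambda(x+\epsilon)=0$ and the bound is $|x|\le\min\{|x|,\lambda\}$; in all remaining cases it never needs to determine the value of $\mathrm{thresh}_\lambda(x+\epsilon)$ — it invokes the one-line observation that $|\mathrm{thresh}_\lambda(z)-z|\le\lambda$ for every $z$, giving $|\mathrm{thresh}_\lambda(x+\epsilon)-x|\le\lambda+|\epsilon|$, and then splits on $|\epsilon|\ge\lambda/2$ versus $|\epsilon|<\lambda/2$, $|x|>\lambda/2$. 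The paper's route therefore collapses your Case~A2 and Case~B into a single step and avoids the somewhat delicate sub-branching inside your Case~A2; your route is more ``operational'' (first figure out what the estimator outputs, then bound). Both are sound and both end with the same constant $3$; the paper's is marginally shorter because of the uniform inequality $|\mathrm{thresh}_\lambda(z)-z|\le\lambda$, which you might keep in mind as the organizing trick for this kind of thresholding estimate.
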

\begin{proof}
    Indeed, if $|x| \leq \lambda / 2$ and $|\epsilon| < \lambda / 2$, then
    \begin{equation}
        \mathrm{thresh}_\lambda(x + \epsilon) = 0\Rightarrow |\mathrm{thresh}_\lambda(x + \epsilon) - x| = |x| \leq \min\{|x|,\lambda\}.
    \end{equation}
    Otherwise, we use that for all $z\in \mathbb{R}$, we have $|\mathrm{thresh}_\lambda(z) - (z)| \leq \lambda$, which implies that
    \begin{equation}
        |\mathrm{thresh}_\lambda(x + \epsilon) - x| \leq \lambda + |\epsilon|.
    \end{equation}
    Now, if $|\epsilon| \geq \lambda/2$, it follows that $\lambda \leq 2|\epsilon|$ so that
    \begin{equation}
        |\mathrm{thresh}_\lambda(x + \epsilon) - x| \leq 3|\epsilon| = 3\cdot|\mathrm{thresh}_{\lambda/2}(\epsilon)|.
    \end{equation}
    On the other hand, if $|\epsilon| < \lambda / 2$ and $|x| > \lambda/2$, then $\lambda / 2 < \min\{|x|,\lambda\}$, and we have
    \begin{equation}
        |\mathrm{thresh}_\lambda(x + \epsilon) - x| \leq \lambda + |\epsilon| \leq (3/2)\lambda \leq 3\min\{|x|,\lambda\}.
    \end{equation}
    So, in all cases, we  have the estimate \eqref{basic-thresholding-bound}.
    \end{proof}
We use  this Lemma to show that the error $\|v - \hat v\|^*_{q}$ can be bounded by a deterministic and stochastic component.  We first define a new random variable $\xi_\lambda$, whose components are obtained from the  components of a Gaussian random variable $\xi$ via 
hard thresholding,
\be\label{definition-of-eta-lambda}
    \left(\xi_{\lambda}\right)_i :=\mathrm{thresh}_{\lambda/2}(\xi_i)= \begin{cases}
        \xi_i, & |\xi_i| \geq \lambda / 2,\\
        0, & |\xi_i| < \lambda / 2,
    \end{cases}
\ee
and prove  the following theorem.

\begin{theorem}
\label{T;detstoch}
Let  $v$ be a vector from $ \R^L$ for which we have the  noisy observations $v^* = v+ \xi$.  For $\lambda\geq 0$, we  define $\hat v$ by \eref {hard-thresholding-1092}.  Then, for any $1\leq q<\infty$,   and $0<p\leq q$, we have
\begin{equation}\label{Tds}
    \|v - \hat v\|^*_{q} \leq   3   \left [\|\min\{|v|,\lambda\}\|^*_{q}+\|\xi_\lambda\|^*_{q}\right ]\leq 3\left [[\|v\|_p^*]^{p/q}\lambda^{1-p/q}+\|\xi_\lambda\|^*_{q}\right],
\end{equation}
where
 $\min\{|v|,\lambda\}$ is defined componentwise.

\end{theorem}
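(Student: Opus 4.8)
The plan is to reduce everything to the pointwise estimate of Lemma~\ref{threshold-error-lemma} and then pass to the weighted $\ell_q$-norm. First I would apply Lemma~\ref{threshold-error-lemma} coordinate by coordinate with the choice $x=v_i$ and $\epsilon=\xi_i$, using the definition \eqref{hard-thresholding-1092} of $\hat v_i=\mathrm{thresh}_\lambda(v_i+\xi_i)$ and the definition \eqref{definition-of-eta-lambda} of $(\xi_\lambda)_i=\mathrm{thresh}_{\lambda/2}(\xi_i)$. This gives, for every $i=1,\dots,L$,
\[
   |v_i-\hat v_i|\le 3\bigl(\min\{|v_i|,\lambda\}+|(\xi_\lambda)_i|\bigr).
\]

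Next I would take the weighted $\ell_q$-norm $\|\cdot\|^*_q$ of both sides. Since $q\ge 1$, the functional $v\mapsto\|v\|^*_q=\bigl(L^{-1}\sum_i|v_i|^q\bigr)^{1/q}$ is a genuine norm on $\R^L$ that is monotone with respect to the coordinatewise absolute values. Applying this monotonicity to the displayed pointwise bound, and then the triangle inequality to split the sum $\min\{|v_i|,\lambda\}+|(\xi_\lambda)_i|$, yields
\[
   \|v-\hat v\|^*_q\le 3\,\bigl\|\bigl(\min\{|v_i|,\lambda\}+|(\xi_\lambda)_i|\bigr)_{i=1}^L\bigr\|^*_q\le 3\bigl(\|\min\{|v|,\lambda\}\|^*_q+\|\xi_\lambda\|^*_q\bigr),
\]
which is the first inequality in \eqref{Tds}.

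For the second inequality it then suffices to bound $\|\min\{|v|,\lambda\}\|^*_q$ by $[\|v\|^*_p]^{p/q}\lambda^{1-p/q}$. This is a one-line interpolation: since $0<p\le q$, for each coordinate $\min\{|v_i|,\lambda\}^q=\min\{|v_i|,\lambda\}^p\,\min\{|v_i|,\lambda\}^{q-p}\le |v_i|^p\,\lambda^{q-p}$; summing over $i$, dividing by $L$, and taking the $q$-th root gives $\|\min\{|v|,\lambda\}\|^*_q\le\lambda^{1-p/q}[\|v\|^*_p]^{p/q}$. Combining this with the previous display completes the proof. There is no real obstacle here: the only points requiring care are that $q\ge 1$ is needed for the triangle inequality on $\|\cdot\|^*_q$, that $p\le q$ is needed so the exponent $q-p$ in the interpolation step is nonnegative, and that the degenerate cases $\lambda=0$ or $v=0$ are covered directly by the conventions in \eqref{hard-thresholding-1092} and \eqref{definition-of-eta-lambda}.
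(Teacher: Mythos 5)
Your proof is correct and follows essentially the same route as the paper: apply Lemma~\ref{threshold-error-lemma} coordinatewise, pass to the $\|\cdot\|^*_q$ norm by monotonicity and the triangle inequality (using $q\ge 1$), and then use the pointwise interpolation bound $\min\{|v_i|,\lambda\}\le |v_i|^{p/q}\lambda^{1-p/q}$ (your version, raising to the $q$-th power first, is just an equivalent rearrangement). The only minor difference is that you explicitly flag where $q\ge 1$ and $p\le q$ are used, which the paper leaves implicit.
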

\begin{proof}
We apply Lemma \ref{threshold-error-lemma} to each component of $v$ to obtain
\begin{equation}
| v_i-\hat v_i|\le 3(\min\{|v_i|,\lambda\} + |(\xi_\lambda)_i|).
\end{equation}
We now apply the $\|\cdot\|_q^*$ to derive the first inequality in \eref{Tds}.
Next, for each component $v_i$ of $v$, we have
    \begin{equation}
    \min\{|v_i|,\lambda\}\le  |v_i|^{p/q}\lambda^{1-p/q}.
    \end{equation}
Applying   $\| \cdot\|_q^*$ we arrive at the last inequality in \eref{Tds}.
\end{proof}

Notice that the first term on the right side of \eref{Tds} is deterministic since it does not depend on the noise draw.  We next want to give a bound for the second term which is stochastic. To derive our bound,
we introduce the notation.
\begin{equation}
F:=\frac{1}{L}\sum_{i=1}^L f_i, \quad \hbox{where}\quad     f_i := \left|\xi_{\lambda}\right|^q_i = \begin{cases}
        |\xi_i|^q, & |\xi_i| \geq \lambda / 2,\\
        0, & |\xi_i| < \lambda / 2.
    \end{cases}
\end{equation}  
Then, $\|\xi_\lambda\|^*_q =F^{1/q}$. 

\begin{theorem}
\label{T:etaprob}
For any 
 increasing, convex function $\phi$ on $\R_+:=[0,\infty)$ with $\phi(0)=0$    and any $1\leq q<\infty$,  we have
\be
\label{Tetaprob1}
   \mathbb{P}(\|\xi_\lambda\|_q^*\ge T) \le  \tilde \sigma^{-1}\int_{\lambda / 2}^\infty
\frac{\phi(x^q)}{\phi(T^q)}e^{-x^2/2\tilde\sigma^2}\,dx, \quad T>0.
   \ee 
\end{theorem}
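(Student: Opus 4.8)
The plan is to combine a Markov-type bound for the increasing convex function $\phi$ with Jensen's inequality and an elementary truncated-Gaussian moment computation. A pleasant feature is that no independence of the noise components $\xi_i$ is needed: only the marginal law of each $\xi_i$ and the convexity of $\phi$ enter, so the argument applies verbatim to correlated noise.

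First I would convert the event. Since $\|\xi_\lambda\|_q^* = F^{1/q}$ and $q\ge 1$, we have $\{\|\xi_\lambda\|_q^* \ge T\} = \{F \ge T^q\}$, and because $\phi$ is increasing this is contained in $\{\phi(F) \ge \phi(T^q)\}$; since $\phi(F)\ge 0$, Markov's inequality gives $\mathbb{P}(\|\xi_\lambda\|_q^* \ge T) \le \mathbb{E}[\phi(F)]/\phi(T^q)$. As $F = \tfrac{1}{L}\sum_{i=1}^L f_i$ is an average and $\phi$ is convex, Jensen's inequality yields $\phi(F) \le \tfrac{1}{L}\sum_{i=1}^L \phi(f_i)$, hence $\mathbb{E}[\phi(F)] \le \tfrac{1}{L}\sum_{i=1}^L \mathbb{E}[\phi(f_i)]$. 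It thus suffices to show that each $\mathbb{E}[\phi(f_i)]$ is at most $\tilde\sigma^{-1}\int_{\lambda/2}^\infty \phi(x^q)\,e^{-x^2/2\tilde\sigma^2}\,dx$.

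For fixed $i$, $f_i$ equals $|\xi_i|^q$ on $\{|\xi_i|\ge\lambda/2\}$ and vanishes otherwise, so using $\phi(0)=0$ and the symmetry of the $\cN(0,\sigma_i^2)$ density one obtains $\mathbb{E}[\phi(f_i)] = \sqrt{2/\pi}\,\sigma_i^{-1}\int_{\lambda/2}^\infty \phi(x^q)\,e^{-x^2/2\sigma_i^2}\,dx$ (the degenerate case $\sigma_i=0$ gives $\mathbb{E}[\phi(f_i)]=0$; we may also assume $\tilde\sigma>0$, the case $\tilde\sigma=0$ being vacuous). The only nontrivial point is that $\sigma\mapsto \sigma^{-1}\int_{\lambda/2}^\infty \phi(x^q)\,e^{-x^2/2\sigma^2}\,dx$ is nondecreasing on $(0,\infty)$: the substitution $x=\sigma u$ turns it into $\int_{\lambda/(2\sigma)}^\infty \phi(\sigma^q u^q)\,e^{-u^2/2}\,du$, which is visibly nondecreasing in $\sigma$ because increasing $\sigma$ both enlarges the domain of integration (the lower limit $\lambda/(2\sigma)$ decreases) and, since $\phi$ is increasing and the integrand is nonnegative, raises the integrand pointwise. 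Using $\sigma_i\le\tilde\sigma$ and $\sqrt{2/\pi}<1$ then gives the claimed per-coordinate bound.

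Finally, averaging the per-coordinate bound over $i=1,\dots,L$ leaves its right-hand side unchanged, so $\mathbb{E}[\phi(F)] \le \tilde\sigma^{-1}\int_{\lambda/2}^\infty \phi(x^q)\,e^{-x^2/2\tilde\sigma^2}\,dx$, and dividing by $\phi(T^q)$ produces \eqref{Tetaprob1} (if $\mathbb{E}[\phi(F)]=\infty$ the right-hand side of \eqref{Tetaprob1} is also $\infty$, so there is nothing to prove). The main --- indeed the only --- obstacle is the monotonicity in $\sigma$ of the truncated moment, and the change of variables $x=\sigma u$ dispatches it in a line; everything else is routine bookkeeping.
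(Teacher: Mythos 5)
Your proof is correct and follows essentially the same path as the paper: Markov's inequality on $\phi(F)$, Jensen for the average $F$, the explicit truncated-Gaussian expression for $\mathbb{E}[\phi(f_i)]$ dropping the factor $\sqrt{2/\pi}<1$, and a change of variables to show the truncated moment is nondecreasing in the variance so that $\sigma_i$ can be replaced by $\tilde\sigma$. The only cosmetic difference is that you standardize via $x=\sigma u$ whereas the paper rescales via $x=(\sigma_j/\tilde\sigma)z$; the monotonicity mechanism (lower limit shrinks, integrand grows since $\phi$ is increasing) is identical.
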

\begin{proof}
First note that from the convexity of $\phi$  and  Jensen's inequality, we have
\begin{equation}
\phi(F)\leq \frac{1}{L}\sum_{j=1}^L \phi(f_i),
\end{equation}
and
\be 
\label{Jensen}
    \mathbb{E}(\phi(F)) \leq \frac{1}{L}\sum_{j=1}^L  \mathbb{E}(\phi(f_j))\le \max_{1\le j\le L} \mathbb{E}(\phi(f_j)).
\ee 
We can therefore estimate
\begin{eqnarray}
\mathbb{P}( \|\xi_\lambda\|_q^*\ge T)&=& \mathbb{P}( F\ge T^q)\leq  \mathbb{P}( \phi(F)\ge \phi(T^q))\le  (\phi(T^q))^{-1}\mathbb{E}(\phi(F)) \nonumber \\
&\le& \frac{1}{\phi(T^q)}\max_{1\le j\le L} \mathbb{E}(\phi(f_j)),
\end{eqnarray}
where we used Markov's inequality, \eref{Jensen}, and the monotonicity of $\phi$.   
   Recall that  $\sigma_j^2$,  the variance of $\xi_j$, is such that $\sigma_j \leq \tilde\sigma$. We then have
\begin{equation}
[\phi(T^q)]^{-1}\mathbb{E}(\phi(f_j)) \le \sigma_j^{-1}\int_{\lambda / 2}^\infty\frac{\phi(x^q)}{\phi(T^q)}e^{-x^2/2\sigma_j^2}\,dx,
\end{equation}
where we change  variables by  setting $x = (\sigma_j/\tilde \sigma) z$, $dx = (\sigma_j/\tilde \sigma)dz$, to get
\begin{equation}
[\phi(T^q)]^{-1}\mathbb{E}(\phi(f_j)) \le \tilde \sigma^{-1}\int_{(\lambda / 2)(\tilde\sigma / \sigma_j)}^\infty\frac{\phi(((\sigma_j/\tilde\sigma) z)^q)}{\phi(T^q)}e^{-z^2/2\tilde\sigma^2}\,dz.
\end{equation}
Finally, we use  that $\sigma_j \leq \tilde\sigma$ and that $\phi$ is increasing and non-negative to obtain
\begin{equation}
[\phi(T^q)]^{-1}\mathbb{E}(\phi(f_j)) \le \tilde \sigma^{-1}\int_{\lambda / 2}^\infty\frac{\phi(z^q)}{\phi(T^q)}e^{-z^2/2\tilde\sigma^2}\,dz,
\end{equation}
 which completes the proof of the theorem.
 \end{proof}

Given the above theorem,  we will obtain the best (smallest) probability  bound for the stochastic term in \eref{Tds}, %
by minimizing over the choice of $\phi$.  To help decide which $\phi$ to employ,
we will use the following elementary result, proved in Appendix~\ref{LemmaK}.
 
\begin{lemma}
\label{L:known}
 For any $a,q>0$, we have 
 \begin{equation}
 \int_a^\infty x^qe^{-x^2/2}\,dx\le C(q)e^{-a^2/4}.
 \end{equation}
\end{lemma}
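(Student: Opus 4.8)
The plan is to split the Gaussian weight as $e^{-x^2/2} = e^{-x^2/4}\cdot e^{-x^2/4}$ and use one of the two factors to extract the decay in $a$. Concretely, on the interval of integration $x \ge a$ we have $e^{-x^2/4} \le e^{-a^2/4}$, since $x \mapsto e^{-x^2/4}$ is decreasing on $[0,\infty)$. Therefore
\begin{equation}
\int_a^\infty x^q e^{-x^2/2}\,dx = \int_a^\infty x^q e^{-x^2/4} e^{-x^2/4}\,dx \le e^{-a^2/4}\int_a^\infty x^q e^{-x^2/4}\,dx \le e^{-a^2/4}\int_0^\infty x^q e^{-x^2/4}\,dx,
\end{equation}
where in the last step we simply enlarged the domain of integration from $[a,\infty)$ to $[0,\infty)$, which is legitimate since the integrand is nonnegative.

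It remains to observe that the resulting integral is a finite constant depending only on $q$. The substitution $u = x^2/4$ (so $x = 2\sqrt{u}$, $dx = u^{-1/2}\,du$) gives
\begin{equation}
\int_0^\infty x^q e^{-x^2/4}\,dx = 2^q\int_0^\infty u^{(q-1)/2} e^{-u}\,du = 2^q\,\Gamma\!\left(\tfrac{q+1}{2}\right) =: C(q),
\end{equation}
and this is finite because $q > 0$ implies $(q-1)/2 > -1$, so the Gamma integral converges at the origin (and trivially at infinity). This yields the claim with $C(q) = 2^q\Gamma\big((q+1)/2\big)$.

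There is essentially no obstacle here: the argument is elementary, and the only point requiring a moment's care is the convergence of the constant integral at $x = 0$, which is guaranteed by the hypothesis $q > 0$. One could equally split as $e^{-x^2/2} = e^{-a^2/4}\cdot e^{-(x^2/2 - a^2/4)}$ and bound $e^{-(x^2/2-a^2/4)} \le e^{-x^2/4}$ for $x \ge a$; this gives the same estimate. If a fully explicit non-Gamma constant were preferred, one could instead integrate by parts once to reduce the exponent on $x$, but the split above is the cleanest route.
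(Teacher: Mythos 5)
Your proof is correct, and it uses the same core idea as the paper (split off half the Gaussian weight $e^{-x^2/4}$ to extract the $e^{-a^2/4}$ decay), but it packages the estimate in a cleaner way that sidesteps a case split the paper needs. The paper writes the integrand as $(t^{q-1}e^{-t^2/4})(te^{-t^2/4})$, bounds the first factor by $\max_{t\ge 1}t^{q-1}e^{-t^2/4}$, and integrates $te^{-t^2/4}$ exactly; because $t^{q-1}$ can blow up near the origin when $q<1$, the paper must first dispose of the case $a<1$ separately (where the bound is trivial since $e^{-a^2/4}$ is bounded below). You instead pull out $e^{-a^2/4}$ directly via $e^{-x^2/4}\le e^{-a^2/4}$ on the integration domain $x\ge a$, then enlarge the domain to $[0,\infty)$ and evaluate the remaining integral as a Gamma function, which converges for all $q>0$. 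This handles all $a>0$ uniformly with no case distinction and even yields an explicit constant $C(q)=2^q\Gamma\bigl((q+1)/2\bigr)$, so it is arguably the preferable version of the argument.
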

The following theorem is the main result of this section. 
\begin{theorem}
\label{T:mainthresh1}
If $\xi\in\R^L$ has components each of which is a mean zero Gaussian with variance at most $\tilde \sigma^2$, then for any threshold $\lambda\geq 0$ and any $T>0$, we have 
\be
\label{Tetaprob2}
   \mathbb{P}(\|\xi_\lambda\|_q^*\ge T) \lsim 
    { T^{-q}{\tilde \sigma}^qe^{-
(\tilde\sigma^{-1}b)^2/4} },
\quad \hbox{where}\quad 
b := \max\{2^{-1}\lambda,2^{-1/q}T\},
 \ee 
   and the constant in $\lsim$ depends only on $q$.
\end{theorem}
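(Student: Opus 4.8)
The plan is to apply Theorem~\ref{T:etaprob} with the specific choice $\phi(x) = x^2$, which is increasing, convex, and vanishes at $0$, so that $\phi(x^q) = x^{2q}$ and $\phi(T^q) = T^{2q}$. With this choice, \eqref{Tetaprob1} becomes
\be
\mathbb{P}(\|\xi_\lambda\|_q^* \ge T) \le \tilde\sigma^{-1} T^{-2q}\int_{\lambda/2}^\infty x^{2q} e^{-x^2/2\tilde\sigma^2}\,dx.
\ee
First I would rescale the integral by substituting $x = \tilde\sigma u$, turning it into $\tilde\sigma^{2q+1}\int_{\lambda/(2\tilde\sigma)}^\infty u^{2q} e^{-u^2/2}\,du$, so that the prefactor $\tilde\sigma^{-1}$ combines with $\tilde\sigma^{2q+1}$ to give $\tilde\sigma^{2q} T^{-2q}$ times a clean Gaussian moment-type integral with lower limit $\lambda/(2\tilde\sigma)$.

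Next I would invoke Lemma~\ref{L:known} with $a = \lambda/(2\tilde\sigma)$ and exponent $2q$, which yields $\int_{\lambda/(2\tilde\sigma)}^\infty u^{2q} e^{-u^2/2}\,du \le C(q) e^{-\lambda^2/(16\tilde\sigma^2)}$. Putting this together gives
\be
\mathbb{P}(\|\xi_\lambda\|_q^* \ge T) \lsim \tilde\sigma^{2q} T^{-2q} e^{-(\lambda/2)^2/(4\tilde\sigma^2)}.
\ee
This already looks like \eqref{Tetaprob2} except that the exponent only involves $\lambda/2$ and not $b = \max\{\lambda/2, 2^{-1/q}T\}$, and the power of $\tilde\sigma/T$ is $2q$ rather than the claimed $q$. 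The point of the max is that when $T$ is the dominant quantity, one wants the exponential decay to be driven by $T$, not by a possibly tiny $\lambda$. To get this, I would split into two cases. If $2^{-1}\lambda \ge 2^{-1/q}T$, i.e. $b = \lambda/2$, the bound above already gives $\tilde\sigma^{2q}T^{-2q}e^{-b^2/4\tilde\sigma^2} \le \tilde\sigma^q T^{-q}e^{-b^2/4\tilde\sigma^2}$ provided $\tilde\sigma^q T^{-q} \le 1$; and when $\tilde\sigma^q T^{-q} > 1$ the claimed bound is trivial since probabilities are at most $1$ and the exponential is at most $1$... actually more carefully, in that regime one still needs $\tilde\sigma^{2q}T^{-2q}e^{-b^2/4\tilde\sigma^2}$ to control it, so I would simply note $\min\{1, \tilde\sigma^{2q}T^{-2q}\} \le \tilde\sigma^q T^{-q}$ always fails when $\tilde\sigma/T$ is large — instead the cleanest route is: the left side is a probability hence $\le 1$, and separately $\le \tilde\sigma^{2q}T^{-2q}e^{-(\lambda/2)^2/4\tilde\sigma^2}$; when $T \le 2^{1/q}b$ is not the active constraint we are fine, and when it is, I bound a $q$-power's worth of $\tilde\sigma^{2q}T^{-2q}$ by $1$ using $T \le 2^{1/q}b \le$ (something comparable to $\tilde\sigma$ after absorbing the exponential), trading the extra $\tilde\sigma^q T^{-q}$ against part of the Gaussian factor $e^{-(2^{-1/q}T)^2/4\tilde\sigma^2} = e^{-T^2/(2^{2/q}4\tilde\sigma^2)}$, which decays in $T/\tilde\sigma$ and hence dominates any polynomial $\tilde\sigma^q T^{-q}$ up to a $q$-dependent constant.

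The main obstacle is precisely this last bookkeeping step: converting the naturally-arising $(\tilde\sigma/T)^{2q} e^{-(\lambda/2)^2/4\tilde\sigma^2}$ into the stated $(\tilde\sigma/T)^q e^{-b^2/4\tilde\sigma^2}$. The key idea is that one "spends" half of the polynomial decay $(\tilde\sigma/T)^{2q} = (\tilde\sigma/T)^q \cdot (\tilde\sigma/T)^q$ to generate the missing exponential term: whenever the $T$-branch of the maximum is active, $\lambda/2 < 2^{-1/q}T$, so $e^{-(\lambda/2)^2/4\tilde\sigma^2}$ is weaker than $e^{-(2^{-1/q}T)^2/4\tilde\sigma^2}$ and one instead re-derives the bound keeping the lower limit of integration implicit and using that $\int_{\lambda/2}^\infty u^{2q}e^{-u^2/2}du \le \int_0^\infty u^{2q}e^{-u^2/2}du \lsim 1$, giving the crude bound $(\tilde\sigma/T)^{2q}$; then $(\tilde\sigma/T)^{2q} = (\tilde\sigma/T)^q (\tilde\sigma/T)^q$ and one checks $(\tilde\sigma/T)^q \lsim_q e^{-T^2/(2^{2/q}4\tilde\sigma^2)}\cdot(\text{const})$ is false in general — so the honest argument is to combine \emph{both} available bounds, $\min\{(\tilde\sigma/T)^{2q}e^{-(\lambda/2)^2/4\tilde\sigma^2},\ 1\}$, and verify $\min\{A, 1\} \le A^{1/2}$ is the wrong inequality too; the correct elementary fact is $\min\{A,1\}\le \sqrt{A}$ only when $A\le 1$, and when $A>1$ the bound $\le 1$ suffices while $b^2/4\tilde\sigma^2$ need not be small, so one actually uses that in the regime $b = 2^{-1/q}T$ we have $T\lsim b$, hence $(\tilde\sigma/T)^q \gsim_q (\tilde\sigma/b)^q$, and it is the \emph{statement} that is asserted with an extra harmless constant — I would resolve this by carrying the constant through and noting $\tilde\sigma^q T^{-q} e^{-b^2/4\tilde\sigma^2} \gsim_q \min\{1, \tilde\sigma^{2q}T^{-2q}e^{-(\lambda/2)^2/4\tilde\sigma^2}\}$ by a direct case check, which closes the argument.
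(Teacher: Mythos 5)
Your choice $\phi(x)=x^2$ does not yield the stated bound, and the patch you sketch at the end is false. With $\phi(x)=x^2$, the substitution $x=\tilde\sigma u$ and Lemma~\ref{L:known} turn \eqref{Tetaprob1} into
\[
\mathbb{P}(\|\xi_\lambda\|_q^* \ge T) \le C(q)\left(\frac{\tilde\sigma}{T}\right)^{2q} e^{-\lambda^2/(16\tilde\sigma^2)},
\]
which has two defects relative to \eqref{Tetaprob2}: the polynomial prefactor is $(\tilde\sigma/T)^{2q}$ rather than $(\tilde\sigma/T)^q$, and the exponent sees only $\lambda/2$, never the $2^{-1/q}T$ branch of $b$. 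You correctly sense the difficulty, but the rescue you settle on --- that $\tilde\sigma^q T^{-q}e^{-b^2/(4\tilde\sigma^2)} \gtrsim \min\bigl\{1,\ \tilde\sigma^{2q}T^{-2q}e^{-\lambda^2/(16\tilde\sigma^2)}\bigr\}$ up to a $q$-dependent constant --- is not a true inequality. Take $q=1$, $T=1$, $\tilde\sigma$ large, and $\lambda = 4\tilde\sigma\sqrt{\ln(2\tilde\sigma^2)}$, so that $A := \tilde\sigma^{2}e^{-\lambda^2/(16\tilde\sigma^2)} = 1/2$ and $b=\lambda/2$. Then $B := \tilde\sigma\, e^{-\lambda^2/(16\tilde\sigma^2)} = 1/(2\tilde\sigma)\to 0$ while $\min\{1,A\}=1/2$ stays bounded away from zero, so no fixed constant closes the gap.

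The paper instead takes $\phi(x)=(x-T^q/2)_+$. The point of the hinge is that $\phi(x^q)$ vanishes identically for $x\le 2^{-1/q}T$, so the lower endpoint of the integral in \eqref{Tetaprob1} automatically rises from $\lambda/2$ to $b=\max\{\lambda/2,\ 2^{-1/q}T\}$ with no case analysis, and the $T$-branch of the exponent then falls out of Lemma~\ref{L:known} for free. At the same time, $\phi(T^q)=T^q/2$ and $\phi(\tilde\sigma^q u^q)\le \tilde\sigma^q u^q$ give the unsquared prefactor $T^{-q}\tilde\sigma^q$. The freedom to choose $\phi$ in Theorem~\ref{T:etaprob} exists precisely so that the $\max$ in $b$ can be manufactured by a $\phi$ with a dead zone below $T^q$; no pure power of $x$ can produce it, which is why the case analysis you attempt cannot be repaired without changing $\phi$.
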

\begin{proof}
We define $\phi(x):=(x- T^q/2)_+$, for $x\ge 0$.  It follows from Theorem \ref{T:etaprob} that
\begin{eqnarray} 
\label{case21}
 \mathbb{P}(\|\xi_{\lambda}\|_q^*\ge T) & \le &     \tilde\sigma^{-1}\int_{\lambda / 2}^\infty
\frac{\phi(x^q)}{\phi(T^q)}e^{-x^2/2\tilde\sigma^2}\,dx 
=\tilde \sigma^{-1}\int_{b}^\infty
\frac{\phi(x^q)}{\phi(T^q)}e^{-x^2/2 \tilde\sigma^2}\,dx  \nonumber \cr
&=&  [\phi(T^q)]^{-1}\int_{\tilde\sigma^{-1}b} ^\infty \phi(\tilde\sigma^qu^q) e^{-u^2/2}\, du\leq 2 T^{-q}\tilde\sigma^q\int_{\tilde\sigma^{-1}b} ^\infty  u^q e^{-u^2/2}\, du \nonumber\\
&\lsim& 
{ T^{-q}{\tilde \sigma}^qe^{-
(\tilde\sigma^{-1}b)^2/4} },
\end{eqnarray}
where 
we used the fact that $\phi(x)\leq x$ on $[0,\infty)$ and  had applied 
Lemma \ref{L:known}.
\end{proof}

\section{Proofs of  Upper Bounds}
\label{S:ubs}

 In this section, we  show that our numerical algorithm from \S 3 will satisfy the upper bound of the theorems formulated in \S \ref{intro}, provided we properly choose the thresholds $\lambda_k$ at each dyadic level $k$.  Throughout this section, we fix $s,p,q$, so that $s>d/p$, and we assume that we are in the primary case, i.e., \eref{parameters} holds.  

 \subsection{Proof of   Theorem \ref{T:mainprob}}
\label{SS:ub1}
 Let us begin by making two remarks that will allow us to restrict the range of the parameters in what follows.

 \begin{remark} 
 \label{R:p<q}
 We can assume that $p\le q$.
 \end{remark}
    Indeed, when $p\ge q$ we have $U(B_\tau^s(L_p(\Omega)) \subset U(B_\tau^s(L_q(\Omega))$ and the right side of \eref{alpha} is  always equal to $2$. Therefore  this claim follows from the case $p=q$.  We assume that $p\le q$ in going forward in this section.

 \begin{remark} 
 \label{R:sigma}
 We can assume that $\sigma^2 < m $.
 \end{remark}
 Indeed, recall that when $\sigma^2\ge m$, we define $A(y):=0$ for all $y$. Since $f\in U(B^s_\tau(L_p))\subset U(B^s_\infty (L_p))$ and $\|f\|_{L_q(\Omega)}\le C_0\|f\|_{B_\infty^s(L_p(\Omega))}\leq C_0$, the probability in \eref{T1} will
 always be zero if the  constant $C$ in \eref{T1} is chosen such that   $C\geq C_0$.  Hence, \eref{T1} will automatically be satisfied for all $t>0$ whenever $\sigma^2\ge m$.
 We assume $\sigma^2<m$ in going forward in this section.

 \begin{remark} 
 \label{R:t}
 In order  to prove \eref{T1}  for any fixed $q$,  it is enough to consider $t\ge 1$.
 \end{remark}
 Indeed, once \eref{T1} is proven for a given $C,c$ %
  and $t\ge 1$, then it holds for all $t< 1$ by simply adjusting the constant $C$ in \eref{T1}.

  \bigskip
 In order to bound $\|f-\hat f\|_{L_q(\Omega)}$,  we will use \eref{f-error}. So our task is to prove  a bound for
 \be 
 \label{want11}
 E^*:=\sum_{k=0}^{n-r}\|\nu_k-\hat \nu_k\|^*_q,
\ee
where $\nu_k$
are defined in \eref{defnuk} and $\hat \nu_k={\rm thresh}_{\lambda_k}(\nu_k^*)$ are  obtained by thresholding the observed coefficients, i.e., the entries in the vector $\nu_k^*=\nu_k+\eta^*_k$.  Here,  the ${ \rm thresh}_{\lambda_k}$ is performed coordinatewise, with parameters $\lambda_k$ specified later in \eqref{choicelambdak}. Notice that $E^*$ is a random variable depending on our noise draw.

For the remainder of this section, let  $f\in B_\tau^s(L_p(\Omega))$.
Since our performance theorems do not depend on $\tau$, it is enough to consider $\tau=\infty$ which gives the largest model class. We introduce the quantity
\be 
\label{defepsilon}
\e:=\left [\frac{\sigma^2}{m}\right ]^{\frac{s}{2s+d}},
\ee 
which occurs prominently in the bounds we want to prove. Note that $\epsilon < 1$ since $\sigma^2 < m$.
We know that, see Lemma \ref{MainL},
\be\label{l-p-bound-nu}
\|\nu_k\|^*_{p} \leq C2^{-ks},\quad 0\le k\le n-r,
\ee
 since the true coefficient vector $\nu_k$ has length $L_k = C2^{kd}$ and satisfies \eqref{know10}. 
 Each component of the noise $\eta^*_k$ is a Gaussian random variable, see Lemma \ref{MainLtilde}, with variance bounded by (see \eqref{varianceIj})
\be
\label{variance-equation-11571}
\sigma_k^2 :=  C2^{-(n-k)d}\sigma^2 = C\left[\frac{\sigma^2}{m}\right]2^{kd}, \quad 0\le k\le n-r.
\ee
 It follows from Theorem \ref{T;detstoch} applied with $\lambda = \lambda_k$, $v = \nu_k$, and $\xi = \eta_k^*$, 
 that 
 \begin{equation}
 \|\nu_k-\hat \nu_k\|^*_q\leq C\left [
 2^{-ksp/q}\lambda_k^{1-p/q}+\|\eta_{\lambda_k}\|^*_q\right], \quad 0\le k\le n-r, 
 \end{equation}
 where we used the notation $\eta_{\lambda_k}:=\mathrm{thresh}_{\lambda_k/2}(\eta_k^*)$.
Therefore, from \eref{f-error} and \eref{want11}, we obtain
\be
\label{want1}
\|f-\hat f\|_{L_q(\Omega)}\le   C\left[m^{-\frac{s}{d} +(\frac{1}{p}-\frac{1}{q})_+}+E^*\right]
\le   C\left[m^{-\frac{s}{d} +(\frac{1}{p}-\frac{1}{q})_+}+\Sigma_1+\Sigma_2\right],
\ee 
with 
\be 
\label{defsigma1}
\Sigma_1:=\sum_{k=0}^{n-r} 2^{-ksp/q}\lambda_k^{1-p/q}, \quad \hbox{and}\quad 
\Sigma_2:=\sum_{k=0}^{n-r} \|\eta_{\lambda_k}\|^*_q.
\ee 

In order to give our choice for the thresholds $\lambda_k$, $0\le k\le n-r$, we  define $k^*$   as the integer which satisfies 
\be\label{defk}
2^{k^*-1}\leq 
\e^{-1/s}< 2^{k^*}. 
\ee
Note that $k^*>0$ since $\e<1$. 
We now define the thresholds 
\begin{equation}\label{choicelambdak}
    \lambda_k := 2^{-k^*s}\begin{cases}
        0, & k \leq k^*,\\
        2^{\beta(k-k^*)}, & k > k^*,
    \end{cases}
\end{equation}
where $\beta$ is any parameter from the interval $\beta\in \left(\frac{d}{2},\frac{sp}{q-p}\right)$.  This interval is non-empty due to assumption \eqref{parameters}. The (implied) constants in what follows may depend upon the choice of $\beta$, but $\beta$ will be specified later in the proof and will depend only upon $\alpha$. %

Note that it follows from \eref{variance-equation-11571} and \eref{defk} that  
 \be 
\label{sigmak*}
\epsilon\asymp \sigma_{k^*}, \quad \sigma_k\asymp 2^{\frac{d}{2}(k-k^*)}\varepsilon,\quad 0\leq k\leq n-r,
\ee
and
\be\label{sigma}
\lambda_k\asymp2^{\beta(k-k^*)}\varepsilon,\quad 
\hbox{when}\quad k>k^*. 
\ee

In the following subsections, we will bound the deterministic  term
$\Sigma_1$ and the stochastic term $\Sigma_2$ for the above choice of thresholds and thereby prove Theorem \ref{T:mainprob}.

\subsection{Bounding the Deterministic Term \texorpdfstring{$\Sigma_1$}{Sigma1}}
\label{SS:bdet1}

Using the above choice of $\lambda_k$, we have
\begin{eqnarray} 
\label{boundSigma1}
    \Sigma_1&=&\sum_{k=k^*+1}^{n-r} 2^{-ksp/q} \lambda_k^{\left(1-p/q\right)}
    \lsim 2^{-k^*s}\sum_{k > k^*} 2^{\left (\beta\left(1-\frac{p}{q}\right) - s\frac{p}{q}\right)(k-k^*)}%
    \lesssim  2^{-k^*s}\lesssim %
    \e.
\end{eqnarray}
Note that  the middle sum is a convergent geometric series because $\beta < \frac{sp}{q-p}$. In the last $\lesssim$ we have used the definition \eref{defk} of $k^*$.

\subsection{Bounding the Stochastic  Term \texorpdfstring{$\Sigma_2$}{Sigma2}}
\label{SS:stoch1}
We turn now to bounding the stochastic term $\Sigma_2$.  Our goal is to show that for $f\in K:=U(B_\infty^s(L_p(\Omega)))$, we have
\be 
\label{goal}
\mathbb{P}(\Sigma_2> t \e)\le Ce^{-ct^\alpha},\quad  t\ge 1,
\ee
for any $0<\alpha<2-\frac{d(q-p)}{sp}$ which will in turn prove the theorem.  Since we are in the primary case, we have $2-\frac{d(q-p)}{sp}>0$.
 The constants $c,C>0$ depend only on $s,p,q,d$, and the choice of $\alpha$.
In this section, we will frequently make use of the following lemma, whose proof is in Appendix~\ref{LemmaK2}.

\begin{lemma}
\label{L:known2}
  For any $0\le a \le a_0$; $b\ge b_0>0$; $ c> 0 $, and $\tau\ge 1$, we have
\be\label{LL}
 \sum_{k\ge 0}  2^{a k}e^{-c2^{b k}\tau} \le C(a_0,b_0,c)e^{-c\tau},\quad \tau\ge 1.
\ee
\end{lemma}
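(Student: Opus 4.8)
\textbf{Proof proposal for Lemma~\ref{L:known2}.}
The plan is to split the geometric-type sum at the index where the exponential decay $e^{-c2^{bk}\tau}$ begins to dominate the polynomial-type growth $2^{ak}$, and then bound the two pieces separately. First I would observe that since $b\ge b_0>0$, the sequence $2^{bk}$ grows geometrically, so there is a threshold index $k_0=k_0(a_0,b_0,c)$ (independent of $\tau\ge1$ since $\tau$ only helps) such that for $k\ge k_0$ we have, say, $c2^{bk}\tau\ge \frac{a_0+1}{\log 2}\,k + c\tau$; this is a purely deterministic statement about exponential versus linear growth in $k$, using $b\ge b_0$ and $a\le a_0$. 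Actually, to be cleaner, I would arrange the threshold so that for $k\ge k_0$ the factor $e^{-\frac12 c2^{bk}\tau}$ already absorbs $2^{ak}$, i.e. $2^{ak}e^{-\frac12 c 2^{bk}\tau}\le 1$.

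For the tail $k\ge k_0$: write $e^{-c2^{bk}\tau}=e^{-\frac12 c2^{bk}\tau}e^{-\frac12 c2^{bk}\tau}$, bound the first factor by $2^{-ak}$ (by choice of $k_0$), so the summand is at most $e^{-\frac12 c 2^{bk}\tau}\le e^{-\frac12 c 2^{b_0 k}\tau}$, and since $2^{b_0 k}\ge 2^{b_0 k_0}+b_0(\log 2)(k-k_0)\cdot(\text{something})$ — more simply, $2^{b_0k}$ is increasing and $\ge k\cdot\mathrm{const}$ for $k\ge k_0$ — the sum $\sum_{k\ge k_0}e^{-\frac12 c2^{b_0k}\tau}$ is dominated by a convergent geometric series whose first term is $e^{-\frac12 c2^{b_0 k_0}\tau}\le e^{-c\tau}$ (after possibly enlarging $k_0$ so that $\frac12 2^{b_0k_0}\ge 1$), giving a bound $C e^{-c\tau}$. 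For the head $0\le k< k_0$: there are only finitely many terms (at most $k_0$ of them), each bounded by $2^{a_0 k_0}e^{-c\tau}\le 2^{a_0 k_0}e^{-c\tau}$ since $2^{bk}\ge 1$ for $k\ge0$ and $b>0$; hence the head contributes at most $k_0\,2^{a_0k_0}e^{-c\tau}=C(a_0,b_0,c)e^{-c\tau}$. Adding the two pieces yields \eqref{LL}.

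The only mildly delicate point — and the step I would be most careful about — is making the threshold index $k_0$ genuinely independent of $\tau$, which works precisely because $\tau\ge1$ means larger $\tau$ only sharpens every inequality, so it suffices to verify all the comparisons at $\tau=1$. Everything else is bookkeeping with geometric series and the elementary fact that an exponential $2^{b_0k}$ eventually beats any linear function of $k$. I would present it by fixing $k_0$ as the smallest integer with $2^{b_0 k_0}\ge \max\{2,\ (2a_0(\log 2)/c)\,k_0+\cdots\}$ type condition, but in the write-up it is cleanest to just say ``choose $k_0$ large enough (depending only on $a_0,b_0,c$) that $2^{a_0 k}\le e^{\frac12 c 2^{b_0 k}}$ for all $k\ge k_0$,'' which exists by the growth comparison, and proceed as above.
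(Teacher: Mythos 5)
Your proof is correct and follows essentially the same strategy as the paper: reduce to the extreme parameters $a_0,b_0$ using $\tau\ge 1$, split the sum at a threshold index depending only on $(a_0,b_0,c)$, bound the finitely many head terms using $2^{bk}\ge 1$, and in the tail split the exponential in half so that one half absorbs the polynomial factor $2^{ak}$ and the other half supplies both the $e^{-c\tau}$ factor and a convergent (super-geometric) series. The only cosmetic difference is that the paper factors $e^{-c\tau}$ out of every tail term up front, whereas you extract it from the first term of the tail after enlarging $k_0$ so that $\tfrac12 2^{b_0 k_0}\ge 1$; both bookkeeping choices are valid.
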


We fix any $q\ge1$ and $t\ge 1$.   Recall that according to Remark \ref{R:t} we only have to prove \eref{goal} for such $t$. We introduce numbers $t_k \geq 0$, to be specified below, which satisfy $\sum_{k=0}^{n-r}t_k\le t$. A standard union bound argument implies that it suffices to show that
\be
\sum_{k=0}^{n-r}\mathbb{P}(\|\eta_{\lambda_k}\|_q^*>t_k\e) \le Ce^{-ct^\alpha},\quad 
t\ge 1,
\ee
where we choose 
\begin{equation}
t_k:= \bar ct\begin{cases}
    2^{\delta (k -k^*)}, &0\leq k\leq k^*,\\
    2^{\delta(k^*-k)}, &k^*<k\leq n-r,
\end{cases}
\end{equation}
with  $0<\delta<d/2$,  
and define $\bar c$  so that  that $\sum_{k=0}^{n-r} t_k= t$.  Note that 
$\bar c\geq c(\delta)>0$, where  $c(\delta)$ is a constant depending  only $\delta$. The subsequent constants in what follows may depend upon $\delta$, but $\delta$ will be specified later in the proof and will depend only upon $\alpha$.

The remainder of the proof is divided into the following two cases.

\noindent 
{\bf Case 1:}  A bound for $\sum_{k=0}^{k^*}\mathbb{P}(\|\eta_{\lambda_k}\|_q^*>t_k\e)$.

Let $0\le k\le k^*$.  In this case, all thresholds are zero, i.e., no thresholding takes place.
We return to Theorem \ref{T:mainthresh1} and take $T=T_k:=t_k\e =\bar c2^{\delta(k-k^*)}t\e \geq c(\delta)2^{\delta(k-k^*)}t\e$.  Since the thresholds are zero, we have  $b=b_k= 2^{-1/q}T_k $ 
and $\sigma_k\asymp 2^{\frac{d}{2}(k-k^*)}\e$.   We compute
\be 
\label{cas13}
T_k^2/\sigma_k^2 \gtrsim  t^2\e^2   2^{2(k-k^*)\delta}2^{d(k^*-k)}  \e^{-2}\asymp t^2\mu^{k^*-k},\quad \mu:=2^{d-2\delta}>1.
\ee 
Thus, if 
{$t\geq 1$}, 
then 
$T_k>C\sigma_k$, $0\le k\le k^*$.
Theorem \ref {T:mainthresh1} now gives
\be
\label{case1}
   \mathbb{P}(\|\eta_{\lambda_k}\|_q^*>t_k\e)  \le  CT_k^{-q}
   { {\sigma_k}^q}e^{-
c( \sigma_k^{-1}T_k)^2}\le %
Ce^{-c\mu^{(k^*-k)}t^2},\quad 0\le k\le k^*.
   \ee 
    Hence, using Lemma \ref{L:known2} with $\tau=t^2\geq 1$, we obtain the bound
    \be 
\label{case14}
\sum_{k=0}^{k^*}\mathbb{P}(\|\eta_{\lambda_k}\|_q^*>t_k\e)\le C \sum_{k=0}^{k^*} e^{-c\mu^{(k^*-k)}t^2}\le    Ce^{-ct^2}.
\ee 
Here, we have changed $c,C$ from line to line, but ultimately, we will end up with fixed $c,C,$ once $\delta$ and $\beta$ are specified, depending on the choice of $\alpha$.

\noindent 
{\bf Case 2:} A bound for $\sum_{k>k^*}\mathbb{P}(\|\eta_{\lambda_k}\|_q^*>t_k\e)$.

We next consider the case when $k> k^*$.  In this case, the thresholds are given in \eref{choicelambdak} and thereby satisfy $\lambda_k\asymp \e 2^{\beta(k-k^*)}$, where we recall that $\beta\in \left(\frac{d}{2},\frac{sp}{q-p}\right)$.
We fix $k$ and proceed to bound each of the probabilities $\mathbb{P}(\|\eta_{\lambda_k}\|_q^*>t_k\e)$. 
We use Theorem  \ref{T:mainthresh1} with 
 $T=T_k:=t_k\e =\bar c2^{-\delta(k-k^*)}t\e \geq c(\delta)2^{-\delta(k-k^*)}t\e$, and thus
\begin{equation}
b=b_k=\max\{2^{-1}\lambda_k,2^{-1/q}T_k\}\gtrsim \e \max\{2^{\beta(k-k^*)}, t2^{-\delta(k-k^*)}\}.
\end{equation}
Since $\sigma_k\asymp 2^{\frac{d}{2}(k-k^*)}\e$, we have that 
\be 
\label{bk}
\sigma_k^{-1}b_k\gtrsim 2^{-(k-k^*)d/2} \max\{2^{\beta(k-k^*)}, t2^{-\delta(k-k^*)}\}.
\ee
Applying Theorem \ref{T:mainthresh1}, we obtain the bound
\begin{eqnarray}
   \mathbb{P}(\|\eta_{\lambda_k}\|_q^*>t_k\e)&\lsim& T_k^{-q}{\sigma_k^q}{\rm exp}(-c
( \sigma_k^{-1}b_k)^2/4) \nonumber\\
   &\lsim&  (t\e)^{-q}2^{{(k-k^*)}\delta q} \e^q 2^{(k-k^*)qd/2} {\rm exp}(-c
(\sigma_k^{-1}b_k)^2/4) \nonumber\\
   &\lsim&  \left[t^{-1}2^{(k-k^*)(d/2+\delta)}\right]^{q} {\rm exp}(-c
(\sigma_k^{-1}b_k)^2/4).
\label{probability-bound-1350}
\end{eqnarray}
The maximum on the right hand side in \eqref{bk} will be given by the second term for $k^*<k\le k'$ and  by the first term for $k> k'$, where the value of $k'=k'(t)$ satisfies
\be\label{k'}
t^{\frac{1}{\beta+\delta}} \lsim 2^{(k'-k^*)}\leq t^{\frac{1}{\beta+\delta}}.
\ee
Plugging this into \eqref{probability-bound-1350} gives
\begin{eqnarray}
    \label{semifinal-probability-bound}
\mathbb{P}(\|\eta_{\lambda_k}\|_q^*>t_k\e) &\lsim&
2^{(k-k^\prime)q(d/2+\delta)}
2^{(k^*-k')q(\beta-d/2)}{\rm exp}(-c
(\sigma_k^{-1}b_k)^2/4)
\nonumber \\
&\lsim&
2^{(k-k^\prime)q(d/2+\delta)}{\rm exp}(-c
(\sigma_k^{-1}b_k)^2/4),
\end{eqnarray}
where we have used that $\beta>d/2$.
Now we write
\begin{eqnarray} 
\label{writedec}
\quad \sum_{k>k^*}\mathbb{P}(\|\eta_{\lambda_k}\|_q^*>t_k\e)=\!\!\!\!\!\!\!\sum_{k^*<k\le k'}\mathbb{P}(\|\eta_{\lambda_k}\|_q^*>t_k\e)+\!\!\!\sum_{k>k'}\mathbb{P}(\|\eta_{\lambda_k}\|_q^*>t_k\e)%
=:\Sigma'+\Sigma'',
\end{eqnarray} 
and bound each of these sums.  

{\bf Case 2.1:} A bound for $\Sigma'$.

It follows from \eref{bk} that in the case 
$k^*<k\le k'$,
\be 
\label{case22}
\sigma_k^{-1}b_k
\gtrsim  2^{-(k-k^*)d/2}\max\{2^{\beta(k-k^*)}, t2^{-\delta(k-k^*)}\} =  2^{-(k-k^*)(d/2+\delta)}t.
\ee
Plugging this into \eqref{semifinal-probability-bound} gives
\be
\begin{split}
    \mathbb{P}(\|\eta_{\lambda_k}\|_q^*>t_k\e) &\lsim 2^{(k-k^\prime)q(d/2+\delta)}{\rm exp}(-c2^{-(k-k^*)(d+2\delta)}t^2)\\
    &\lsim {\rm exp}(-c2^{-(k-k^*)(d+2\delta)}t^2)
\end{split}
\ee
since $k \leq k'$. Therefore, we see that
\begin{eqnarray}
\label{fb1}
\Sigma'&=&\sum_{k^*<k\le k'}\mathbb{P}(\|\eta_{\lambda_k}\|_q^*>t_k\e)
\lsim \sum_{k=0}^{k'-k^*}
 {\rm exp}(-c2^{-k(d+2\delta)}t^2) \nonumber \\
 &\lsim& \sum_{k=0}^\infty {\rm exp}(-c2^{k(d+2\delta)}[2^{-(k'-k^*)(d + 2\delta)}t^2])
 \lsim e^{-c\tau} \leq e^{-ct^{\frac{2\beta - d}{\beta + \delta}}},
\end{eqnarray}
where we have used  Lemma \ref{L:known2} with $\tau = 2^{-(k'-k^*)(d + 2\delta)}t^2$. Note that by \eqref{k'} it follows that 
$\tau \geq t^{\frac{2\beta - d}{\beta + \delta}} \geq 1$.

{\bf Case 2.2:} A bound for $\Sigma''$. 

 We obtain from  \eref{bk} in the case 
$k> k'$ that
 \be
\sigma_k^{-1}b_k
\gtrsim  2^{-(k-k^*)d/2}\max\{2^{\beta(k-k^*)}, t2^{-\delta(k-k^*)}\} =  2^{(k-k^*)(\beta-d/2)}.
 \ee
 Using  \eqref{semifinal-probability-bound}, we derive 
 \be
    \mathbb{P}(\|\eta_{\lambda_k}\|_q^*>t_k\e) \lsim 2^{(k-k^\prime)q(d/2+\delta)}{\rm exp}(-c2^{(k-k^*)(2\beta-d)}),
\ee
which gives the estimate
   \begin{eqnarray} 
\nonumber
\Sigma''&=&\sum_{k>k'}\mathbb{P}(\|\eta_{\lambda_k}\|_q^*>t_k\e)\lsim \sum_{k>k'}
 2^{(k-k^\prime)q( d/2{+\delta})} {\rm exp}(-c2^{(k-k^*)(2\beta-d) })
\nonumber \\
& = &  \sum_{j>0}
 2^{jq(d/2{+\delta})} {\rm exp}(-c2^{j(2\beta-d)}[2^{(k^\prime - k^*)(2\beta-d)}])\nonumber\\
&\lsim&  {\rm exp}(-c2^{(k'-k^*)(2\beta-d) })\lsim {\rm exp}( - ct^{\frac{2\beta  -d}{\beta+\delta}}).
\end{eqnarray} 
Here, we again apply Lemma \ref{L:known2} with 
   $\tau=2^{(k^\prime - k^*)(2\beta-d)}\geq 1$.

Combining the bounds 
 for $\Sigma'$ and $\Sigma''$, we obtain that 
\be
\label{sigma2}
\sum_{k>k^*}\mathbb{P}(\|\eta_{\lambda_k}\|_q^*>t_k\e)\lsim {\rm exp}( - ct^{\frac{2\beta  -d}{\beta+\delta}}), \quad t\geq 1.
\ee
Recall that $\beta$ and $\delta$ can be chosen as any numbers satisfying 
$\beta\in (\frac{d}{2},\frac{sp}{q-p})$ and  
  $\delta\in (0,d/2)$, respectively.    Therefore, given any 
 $\alpha$ satisfying
 $0<\alpha<2-\frac{d(q-p)}{sp}$, we   can choose $\beta$ and 
$\delta$ so that 
\begin{equation}
 \alpha= \frac{2\beta  -d}{\beta+\delta}%
 \quad 
 \hbox{since}
 \quad 
 \lim_{\substack{\beta\to sp/(q-p)\\\delta\to 0}}\,\, \frac{2\beta  -d}{\beta+\delta}=2-\frac{d(q-p)}{sp}.
\end{equation}
 This, together with  \eref{case14}, proves  \eref{goal},  and completes the proof.

\section{Proof of Theorem \ref{T:mainexplb} }
\label{S:lowerbounds}

The proof we present here uses well-known ideas from both optimal recovery \cite{BDPS,NT,V, KNS} and minimax estimation \cite{barron1999risk,yang1999information,DJ}. We give a complete and self-contained argument which unifies both regimes. We restrict our presentation to the case where the parameters $p,q,s$ are in the primary case (see \eref{parameters}). We fix these parameters for the remainder of this section. We also fix $m\ge 1$ and the variance $\sigma^2$ assumed on the noise vectors. 
We let the data sites $\cX=\{x_i\in\overline \Omega, \ i=1,\dots,m\}$ be arbitrary but fixed. 

Let $K=U(B_\tau^s(L_p(\Omega)))$, where $\Omega=(0,1)^d$, $0<\tau\le \infty$, and   the other parameters are in the primary region.  
Our goal in this section is to prove the lower bound
\be 
\label{goal2}
R_m(K,\sigma,\cX)_q\ge c\left\{m^{-\frac{s}{d}+(1/p-1/q)_+}+
\min\left\{1, \left[\frac{\sigma^2}{m}\right]^{\frac{s}{2s+d}}\right\}\right\}.
\ee 
  We will use without further mention the fact that
the left side of \eref{goal2} is increasing as $\sigma$ increases and decreasing as $m$ increases. This fact shows in particular that it is enough to prove \eref{goal2} for $\sigma\le\sqrt{m}$ which is an assumption we impose for the remainder of this section. 
Therefore,
\be 
\label{epsagain}
\min\left\{1, \left[\frac{\sigma^2}{m}\right]^{\frac{s}{2s+d}}\right\}= \left[\frac{\sigma^2}{m}\right]^{\frac{s}{2s+d}}=:\e.
\ee 
Throughout this section constants like $c,C$ depend 
only on $p,q,s$ and $\tau$,  but not on $\sigma$ or $m$, and may change from line to line.

In going further in this section, we denote by $y$ any vector in $\R^m$, by $y(f)$ the true data vector of $f$ and by $\tilde y(f)$
 the noisy observation vector 
 \be
 \label{noisyy}
 y(f):=(f(x_1),\dots,f(x_m)),\quad \tilde y(f):=y(f)+\eta,
 \ee 
 where the entries in the noise vector $\eta$ are i.i.d. $\cN(0,\sigma^2)$, and hence $\tilde y(f)$ is a random variable.
 
Let $A$ be any mapping from $\R^m$ into $L_q(\Omega)$.  For any such mapping $A$, we will show that there is a function $f:=f_A \in K$ for which
\be 
\label{goal3}
\mathbb E\|f-A(\tilde y(f))\|_{L_q(\Omega)}\ge c\{m^{-\frac{s}{d}+(1/p-1/q)_+}+\e\},
\ee 
which will then complete the proof of the lower bound.

As already noted  in this paper, see \cite{BDPS},
it is known from optimal recovery that there exist two  functions $f,g\in K$ such that $y(f)=y(g)$ and
\be 
\label{ORlb}
cm^{-\frac{s}{d}+(1/p-1/q)_+}\le \|f-g\|_{L_q(\Omega)}.
\ee
Since $A(\tilde y(f))=A(\tilde y(g))$, it follows that
\be 
\label{ORlb1}
cm^{-\frac{s}{d}+(1/p-1/q)_+}\le   \|f-A(\tilde y(f)\|_{L_q(\Omega)}+\|g-A(\tilde y(g)\|_{L_q(\Omega)} .
\ee
If we now take an expectation with respect to the noise draw, we have
\be 
\label{ORlb2}
\max\{ \mathbb E\|f-A(\tilde y(f))\|_{L_q(\Omega)}, \mathbb E\|g-A(\tilde y(g))\|_{L_q(\Omega)}\}\ge cm^{-\frac{s}{d}+(1/p-1/q)_+}.
\ee
We have satisfied \eref{goal3} whenever $\e\le m^{-\frac{s}{d}+(1/p-1/q)_+} $.  Therefore, in going further, we can   assume that 
$\sigma,m$ satisfy
\be 
\label{assume1}
\left [\frac{\sigma^2}{m}\right ]^{\frac{s}{2s+d}}=\e \ge m^{-\frac{s}{d}+(1/p-1/q)_+}.
\ee 
We fix such a pair $\sigma,m$ for the remainder of this section and proceed to show the existence of a function $f= f_A\in K$ such that
\be 
\label{goal4} \mathbb E\|f-A(\tilde y(f))\|_{L_q(\Omega)} \ge c \e, 
\ee
which will complete the proof of the lower bound.

To produce such a function $f\in K$ satisfying \eref{goal4}, we proceed in a  way, similar  to the proof of lower bounds for the entropy of $K$.  We want to produce a family $\cF=\{f_1,\dots, f_N\}\subset K$  of functions with $N$ large  such that $\|f_i-f_j\|_{L_q(\Omega)}\ge \e$.
We recall that the entropy numbers $\e_\ell(K)_{L_q(\Omega)}$,  $\ell\ge 1$, of $K$  are known to satisfy $\e_\ell(K)_{L_q(\Omega)}\asymp \ell^{-\frac{s}{d}}$, $\ell\ge 1$, see \cite{BDPS}.
This means that for each integer $\ell\ge 1$,  there is a set of $2^\ell$ functions $g_1,\dots,g_{2^\ell}$ all from $K$ such that 
\be 
\label{packing}
\|g_i-g_j\|_{L_q(\Omega)}\ge c\ell^{-\frac{s}{d} }, \quad i\neq j=1, \ldots,2^\ell.
\ee

We now define  $n$ as the smallest integer such that
\be 
\label{defn}
n^{-s}\le \e,
\ee
and describe the construction of $\cF:=\{f_1,\dots,f_N\}$. We refer the reader to \S 12 of \cite{BDPS}, where a similar construction is given in detail.
Let $\phi\in C^\infty_c([0,1]^d)$ be a fixed smooth and compactly supported function on the unit cube $[0,1]^d$ with $\| \phi\|_{L_\infty(\Omega)}= 1$ and $M:=\|\phi\|_{B_\tau^s(L_p(\Omega))}$ finite. It follows that $\phi\ge c$ on a set of positive measure, and therefore
\be 
\label{Lqphi}
\|\phi\|_{L_q(\Omega)}\ge c.
\ee
Consider a uniform (tensor product) subdivision of $\Omega$ into $n^d$ cubes $Q_1,\dots Q_{P}$, $P:=n^d$  of side length $1/n$, where $n$ is given by \eref{defn}. 
We define the functions
\be
    \phi_i(x) =  \gamma n^{-s}\phi(n(x-z_i)), \quad i=1,\dots,P,
\ee
where $z_i$ is the bottom left corner of the cube $Q_i$ and the normalizing constant $\gamma$ defined momentarily. Thus, each $\phi_i$ is a rescaling of $\phi$ to the cube $Q_i$ and each $\phi_i$ has $L_\infty(\Omega)$ norm equal to $\gamma n^{-s}$.

To construct our  collection of functions  $\cF$, we need a well known combinatorial lemma (see for instance Lemma 2.2 in Chapter 15 in \cite{lorentz1996constructive}) stating that  there is a subset $S\subset \{\pm 1\}^P$ such that:
\begin{enumerate}
    \item $|S| \geq 2^{cP}$
    \item $\|\kappa - \nu\|_{\ell^1} \geq cP$ for any $\kappa\neq \nu$ with $\kappa,\nu\in S$.
\end{enumerate}
We then consider
\be
    \mathcal{F}= \left\{f=\sum_{i=1}^P\ \kappa_i\phi_i \: : \: \kappa=(\kappa_1,\dots,\kappa_P) \in S\right\},
\ee
and define 
\be 
\label{defN}
N:=\#(\cF)\ge 2^{cP}=2^{cn^d}.
\ee 
Note that if the normalizing constant $\gamma$ is small enough (depending only on $p,q,s$ and $\tau$), we can guarantee that  $\cF\subset K$.   This now fixes $\gamma$.   

There are $N$ functions $f_1,\dots,f_N$ in $\cF$ and each has $L_\infty(\Omega)$ norm at most $c_0\gamma \e$ for a fixed constant $c_0$.  Additionally, if $i\neq j$ then
the vectors $\kappa$ giving the sign pattern for $f_i$ and $\nu$ giving the sign pattern for $f_j$ will  differ in at least $cP$ coordinates, that is, $|\kappa_\ell-v_\ell|=2$ for at least $cP$ coordinates.   This   gives  (see \eref{Lqphi}) that
\be 
\label{separate}
\|f_i-f_j\|_{L_q(\Omega) }\ge c_0\gamma\e, \quad i\neq j.
\ee
We also need a bound on the values of the $f_i$ at the $m$ data sites.  Since $\|f_i\|_{L_\infty(\Omega)}\le \gamma n^{-s}$ and from \eref{defn}, we have that
$y(f_i)=(f(x_1),\dots,f(x_m))$ satisfies
\be 
\label{boundyi}
\|y(f_i)\|\le C_0\gamma\sqrt{m}\e,\quad i=1,\dots,N,
\ee 
where $\|\cdot\|$ is the Euclidean norm on $\R^m$.  Notice that 
\be 
\label{fy1}
 \sigma^{-2}m\e^2 = \left[\frac{\sigma^2}{m}\right]^{\frac{2s}{2s+d}-1}= \left[\frac{\sigma^2}{m}\right]^{\frac{-d}{2s+d}}=\e^{-d/s}\asymp n^d\quad \Rightarrow \quad \sqrt{m}\e\asymp \sigma n^{d/2},
\ee 
and therefore  the  bound \eref{boundyi} can be equivalently written as
\be 
\|y(f_i)\|\le C_1 \gamma\sigma n^{d/2}=C_1 \sigma \gamma\sqrt{\ln N},\quad i=1,\dots, N,
\ee
where the constant $C_1$ depends only on $p,q,s$ and $\tau$, which are of course fixed. 

Combining all of this, by choosing $\gamma$ appropriately small (but fixed depending only upon $p,q,s$ and $\tau$), we can guarantee that $\mathcal{F}\subset K$, $\|f_i-f_j\|_{L_q(\Omega) }\ge \widetilde{c}_0\e$ for $i\neq j$, and
\be
    \|y(f_i)\| < \sigma \sqrt{\ln (N/5)},\quad i=1,\dots, N.
\ee

We proceed now to show that given any algorithm $A:\R^m\to L_q(\Omega)$, there is a function $f\in K$ that satisfies \eref{goal4}.  For this purpose, we consider the sets
\be
    B_i = A^{-1}\left(\left\{g\in L_q(\Omega):~\|f_i - g\|_{L_q(\Omega)} < \frac{\widetilde{c}_0}{2}\epsilon\right\}\right)\subset \R^m, \quad i=1, \ldots,N.
\ee
Obviously, the sets $B_i$ are disjoint due to the fact that the $f_i$ are separated by at least $\widetilde c_0\epsilon$. For $y\in \mathbb{R}^m$, $\sigma > 0$, and $B\subset \mathbb{R}^m$, we introduce the notation
\be
    \mu_{y,\sigma}(B) :=
        \frac{1}{(2\pi \sigma^2)^{m/2}}\int_B e^{-\|z-y\|^2/2\sigma^2} dz
\ee
for the measure of the set $B$ under a Gaussian centered at $y$ with variance $\sigma^2$.

Since the sets $B_i$ are disjoint, it follows that for some $i$ we must have 
\be 
\label{muusthave} 
\frac{1}{(2\pi \sigma^2)^{m/2}}\int_{B_i} e^{-\|z\|^2/2\sigma^2} dz=\mu_{0,\sigma}(B_i) \leq \frac{1}{N}.
\ee 
We fix such a value of $i$. We can now show that the function $f_i$ satisfies \eref{goal4},
 The proof of this fact is typically completed in the statistics literature via a proper comparison between a Gaussian centered at $0$ and a Gaussian centered  at $y(f_i)$.   Such a comparison is provided   by using the KL-divergence.   For the reader's convenience, in case they are not familiar with these concepts, we state and explicitly prove the following lemma.

\begin{lemma}
\label{L:perturb}
     Suppose that $\sigma > 0$, $0 < \bar \alpha < 1/5$ and $y\in \mathbb{R}^m$ with $\|y\|^2 < -\sigma^2\ln(5\bar \alpha)$. If $B\subset \mathbb{R}^m$ is a measurable set satisfying
    \be
        \mu_{0,\sigma}(B) = \bar\alpha,
    \ee
    then
    \begin{equation}
        \mu_{y,\sigma}(B) < \frac{1}{2}.
    \end{equation}
\end{lemma}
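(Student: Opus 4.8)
The plan is to compare the two Gaussian measures $\mu_{0,\sigma}$ and $\mu_{y,\sigma}$ directly on the set $B$ by controlling the ratio of their densities, rather than invoking the KL-divergence or Pinsker-type inequalities. For $z \in \mathbb{R}^m$ the density ratio is
\begin{equation}
\frac{d\mu_{y,\sigma}}{d\mu_{0,\sigma}}(z) = \exp\!\left(\frac{2\langle z,y\rangle - \|y\|^2}{2\sigma^2}\right),
\end{equation}
so that $\mu_{y,\sigma}(B) = \int_B \exp\!\left(\frac{2\langle z,y\rangle - \|y\|^2}{2\sigma^2}\right) d\mu_{0,\sigma}(z)$. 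The idea is to split $B$ according to whether $\langle z,y\rangle$ is large or small. On the ``good'' part, where $\langle z,y\rangle \le \theta$ for a suitable threshold $\theta$, the integrand is bounded by $\exp\!\left(\frac{2\theta - \|y\|^2}{2\sigma^2}\right)$, and hence that part contributes at most $\exp\!\left(\frac{2\theta-\|y\|^2}{2\sigma^2}\right)\mu_{0,\sigma}(B) = \bar\alpha\exp\!\left(\frac{2\theta-\|y\|^2}{2\sigma^2}\right)$. On the ``bad'' part, where $\langle z,y\rangle > \theta$, we bound $\mu_{y,\sigma}$ of that part crudely by $1$ and instead estimate its probability under $\mu_{y,\sigma}$ via the tail of a one-dimensional Gaussian: under $\mu_{y,\sigma}$ the random variable $\langle z,y\rangle$ is $\mathcal{N}(\|y\|^2,\sigma^2\|y\|^2)$, so $\mu_{y,\sigma}(\langle z,y\rangle > \theta)$ is small once $\theta$ is chosen a fixed multiple of $\|y\|^2$ above $\|y\|^2$ (measured in units of $\sigma\|y\|$).

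Concretely, I would take $\theta = 2\|y\|^2$ (or some comparable choice). Then the good part contributes at most $\bar\alpha \exp\!\left(\|y\|^2/2\sigma^2\right)$, and using the hypothesis $\|y\|^2 < -\sigma^2\ln(5\bar\alpha)$ we get $\exp\!\left(\|y\|^2/2\sigma^2\right) < (5\bar\alpha)^{-1/2}$, so the good part is at most $\bar\alpha (5\bar\alpha)^{-1/2} = \tfrac{1}{\sqrt5}\sqrt{\bar\alpha} < \tfrac{1}{5}$ since $\bar\alpha < 1/5$. For the bad part, $\mu_{y,\sigma}(\langle z,y\rangle > 2\|y\|^2)$ equals $\mathbb{P}(\mathcal{N}(0,1) > \|y\|/\sigma)$; combining with $\|y\|^2 < -\sigma^2\ln(5\bar\alpha)$, i.e. $\|y\|/\sigma < \sqrt{-\ln(5\bar\alpha)} = \sqrt{\ln(1/(5\bar\alpha))}$, one checks that the standard Gaussian tail bound $\mathbb{P}(\mathcal{N}(0,1)>u) \le e^{-u^2/2}$ is not directly strong enough; instead I would bound the bad part by $1 - \Phi(\|y\|/\sigma)$ and note this is at most $1/2$ minus a positive quantity — so a more careful pairing of the two thresholds is needed. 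The cleanest route is to pick $\theta$ so that both pieces are each bounded by, say, $1/4$; this forces a relationship between $\theta/\|y\|^2$, $\sigma$, and $\bar\alpha$, and I would verify that the constant $1/5$ in the hypothesis gives exactly enough room.

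Thus the key steps, in order, are: (1) write $\mu_{y,\sigma}(B)$ as an integral against $\mu_{0,\sigma}$ via the explicit density ratio; (2) choose a threshold $\theta$ on $\langle z, y\rangle$ and split $B = (B \cap \{\langle z,y\rangle \le \theta\}) \cup (B \cap \{\langle z,y\rangle > \theta\})$; (3) bound the first piece by $\bar\alpha\exp\!\left(\frac{2\theta - \|y\|^2}{2\sigma^2}\right)$ and invoke $\|y\|^2 < -\sigma^2\ln(5\bar\alpha)$; (4) bound the second piece by the one-dimensional Gaussian tail $\mathbb{P}_{\mu_{y,\sigma}}(\langle z,y\rangle > \theta)$, again using the bound on $\|y\|$; (5) add the two and check the total is strictly below $1/2$. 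The main obstacle I expect is step (3)--(5): balancing the threshold $\theta$ so that the exponential blow-up factor $\exp\!\left(\frac{2\theta-\|y\|^2}{2\sigma^2}\right)$ on the good part and the Gaussian tail on the bad part are simultaneously controlled by the single hypothesis $\|y\|^2 < -\sigma^2\ln(5\bar\alpha)$, and extracting exactly the constant $5$ (and the restriction $\bar\alpha < 1/5$) from this trade-off. This is the step where the precise constants in the statement are used, and it requires choosing $\theta$ as the ``right'' multiple of $\|y\|^2$ rather than an arbitrary one.
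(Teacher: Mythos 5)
Your plan is genuinely different from the paper's argument and is \emph{salvageable}, but as written it does not close, and it contains a computational error that you then (correctly) flag as a gap without resolving.

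\textbf{Comparison of approaches.} The paper recognizes the quantity $\|y\|^2/(2\sigma^2)$ as the Kullback--Leibler divergence $D(\mu_{y,\sigma}\,\|\,\mu_{0,\sigma})$, splits the defining integral over $B$ and $B^c$, and applies Jensen's inequality (convexity of $-\ln$) to obtain the binary data-processing inequality
\begin{equation}
\frac{\|y\|^2}{2\sigma^2} \;\ge\; -\bar\beta\ln\!\left(\frac{\bar\alpha}{\bar\beta}\right)-(1-\bar\beta)\ln\!\left(\frac{1-\bar\alpha}{1-\bar\beta}\right),
\end{equation}
then argues by contradiction: $\bar\beta\ge 1/2$ forces $\|y\|^2 \ge -\sigma^2\ln(2e^{2/e}\bar\alpha)\ge -\sigma^2\ln(5\bar\alpha)$. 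Your route instead controls the density ratio pointwise on a sublevel set of $\langle z,y\rangle$ and pays with a one-dimensional Gaussian tail on the complement. That is a legitimate and more ``elementary'' alternative, and in fact both routes yield a working constant $5$; the paper's yields the slightly sharper $2e^{2/e}\approx 4.17$.

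\textbf{Where the proposal has a gap.} With $\theta=2\|y\|^2$ the good part is bounded by $\bar\alpha\exp\!\big((2\theta-\|y\|^2)/(2\sigma^2)\big)=\bar\alpha\exp\!\big(3\|y\|^2/(2\sigma^2)\big)$, not $\bar\alpha\exp\!\big(\|y\|^2/(2\sigma^2)\big)$ as you wrote. Using the hypothesis $\bar\alpha< e^{-v^2}/5$ with $v:=\|y\|/\sigma$, this becomes $\tfrac{1}{5}e^{v^2/2}$, which is unbounded in $v$, so this choice of threshold fails for large $v$, and the threshold $\theta=\|y\|^2$ (the other natural multiple) gives bad part $=\mathbb P(\mathcal N(0,1)>0)=1/2$, so the sum can never be $<1/2$. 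More fundamentally, your suggested fix of choosing $\theta$ as ``the right multiple of $\|y\|^2$'' cannot work: setting $\theta=c\|y\|^2$ gives bad part $=1-\Phi((c-1)v)$, which tends to $1/2$ as $v\to 0$, while the good part is bounded away from $0$, so the split is lossy precisely in the regime where $\|y\|$ is small. The correct parameterization is to place $\theta$ a \emph{fixed number of standard deviations} of $\langle z,y\rangle$ (under $\mu_{y,\sigma}$) above its mean, e.g.\ $\theta=\|y\|^2+\sigma\|y\|$. Then the good part is at most $\bar\alpha e^{v^2/2+v} < \tfrac{1}{5}e^{-v^2/2+v}\le \tfrac{\sqrt e}{5}\approx 0.330$ (maximizing the exponent at $v=1$), and the bad part is exactly $1-\Phi(1)\approx 0.159$, giving a total $<0.49<1/2$. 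So the approach works, but the numerical margin is thin and requires the exact value (or a sufficiently tight bound) of $1-\Phi(1)$; the crude tail bound $e^{-u^2/2}$, which you note is ``not directly strong enough,'' indeed does not suffice. In short: right plan, wrong threshold family, and the arithmetic step (3)--(5) was not actually carried out.
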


We give a   measure-theoretic proof of this lemma in Appendix~\ref{SS:A2}. Using this lemma, we complete the proof of \eref{goal4} as follows. We take for $B$ the set $B_i$ which satisfies $\mu_{0,\sigma}(B_i) \leq 1/N$.  We know that 
 the vector $y(f_i)$ satisfies 
 \begin{equation}
 \|y(f_i)\|^2 < \sigma^2 \ln (N/5).
 \end{equation}
 Applying the lemma  with $\bar \alpha=1/N$, we find that
 \be 
 \label{find1}
 \mu_{y(f_i),\sigma}(B_i) < 1/2.
 \ee 
 This means that
 \be 
 \label{find2}
 \mu_{y(f_i),\sigma}(B^c_i) > 1/2.
 \ee 
For any $y\in B_i^c$, we have $\|f_i-A(y)\|_{L_q(\Omega)}\ge \frac{\widetilde c_0}{2}\e $. It follows that
\be 
 \label{find3}
\mathbb E \|f_i-A(\tilde y(f_i))\|_{L_q(\Omega)} \ge \mu_{y(f_i),\sigma}(B^c_i)\cdot  \frac{\widetilde c_0}{2}\e > \frac{\widetilde{c}_0}{4}\e.
 \ee 
This completes the proof of \eref{goal4} and thereby proves Theorem \ref{T:mainexplb}.
\hfill $\Box$

\section{Proof of  Theorem \ref{T:mainexp}}
\label{SS:pTexp}
The lower bound in Theorem \ref{T:mainexp} follows from Theorem \ref{T:mainexplb}. To prove the upper bound, we fix the parameters $s,p,q$ from the primary region.
Let
\begin{equation}
M:=Cm^{-\frac{s}{d} +  ( \frac{1}{p} - \frac{1}{q})_+}~~
\text{and}~~ M^*:=C\left[\frac{\sigma^2}{m}\right]^{\frac{s}{2s + d}},
\end{equation}
with $C$ the constant in Theorem \ref{T:mainprob}. We then have for every $f\in K$,
\begin{eqnarray} 
\label{pTexp}
\mathbb E\|f-\hat f\|_{L_q(\Omega)} &=&\int_0^\infty \mathbb{P}(\|f-\hat f\|_{L_q(\Omega)}\ge t)\,dt
\le M+\int_M^\infty \mathbb{P}(\|f-\hat f\|_{L_q(\Omega)}\ge t)\,dt\nonumber\\
&= & M+M^*\int_0^\infty  \mathbb{P}(\|f-\hat f\|_{L_q(\Omega)}\ge M+M^*t)\, \,dt \nonumber\\
&\le& M+CM^*\int_0^\infty  e^{-ct^\alpha}\,dt\le 
C\left(m^{-\frac{s}{d} +  ( \frac{1}{p} - \frac{1}{q})_+}+\left[\frac{\sigma^2}{m}\right]^{\frac{s}{2s + d}}\right),
\end{eqnarray}
where we have used Theorem \ref{T:mainprob}. This completes the proof in the case  when $\sigma^2\leq m$.  If $\sigma^2>m$, one can just take the algorithm to be $Ay=0$ for every $y\in \R^m$. In this case, the error is always bounded by a constant since $K$ embeds into $L_\infty(\Omega)$.
\hfill $\Box$

\section{Concluding Remarks}
\label{S:concludingremarks}
 The goal of this paper was to merge the optimal recovery and minimax theories for error measured in $L_q(\Omega)$, $\Omega=(0,1)^d$, $1\leq q<\infty$, for model classes $K$ that are the unit ball of Besov spaces.   We accomplished this goal by proving upper and lower bounds in Theorem \ref{T:mainexp} for $R_m(K,\sigma)_q $ in which the constants do not depend on either $\sigma$
 or $m$. As part of meeting this goal, we have given performance results in probability which are stronger than those in expectation. Our algorithm for accomplishing these results
 is also interesting in that it uses only piecewise polynomial approximation and not wavelets. This should be useful in extending our results to more general domains $\Omega$.
 
The above analysis of upper bounds on performance of learning algorithms in probability or expectations was carried out for Besov classes when the parameters
are in the primary region (see \eref{parameters}). We restricted our analysis to this case in order to keep the presentation as simple as possible.  A similar analysis can be carried out when the parameters are outside the primary region. We do not give the details in this case but mention the form of the results.

\begin{remark}
    Let $K=U(B_\tau^s(L_p(\Omega)))$ with $s>d/p$ and  $1\leq q\le\infty$. Outside of the primary regime (see \eqref{parameters}), one can use the techniques of this paper to prove that if $\sigma^2 \leq m/2$, then
    \be
        R_m(K,\sigma)_q \asymp \left\{m^{-\frac{s}{d} + \left(\frac{1}{p} -\frac{1}{q}\right)_+}+\left(\log(m/\sigma^2)\left [\frac{\sigma^2}{m}\right]\right)^{\frac{(s/d+1/q-1/p)}{(1+2(s/d-1/p))}}\right\},
    \ee
    see, for comparison, \cite{DeJu,AN}.
\end{remark}

\section*{Acknowledgments}
The authors thank Peter Binev, Albert Cohen, Wolfgang Dahmen, and Ryan Tibshirani for valuable conversations about the material in this paper.

\section*{Funding}
This research was supported in part by the NSF grants DMS-2134077 and DMS-2134140 of the NSF MoDL program (RD, RN, and GP), NSF grant DMS-2424305 (JS), and the ONR MURI grant
N00014-20-1-2787 (RD, RN, GP, and JS).

\bibliographystyle{plain}
\bibliography{refs}

\appendix

\section{Proof of Lemma \ref{L:polynorms}}
\label{Lemma22}
   
\begin{proof}
All of the (quasi-)norms appearing in the statement of the lemma are equivalent  because they are \mbox{(quasi-)norms} on $\cP_r$. Indeed, if $\|\cdot\|$ is any of these (quasi-)norms,  we have $\|Q\|=0$ for $Q\in\cP_r$ if and only if $Q$ is the zero polynomial.    Thus, the only issue to be addressed is to prove that the constants appearing in these equivalences can be chosen to depend only on the dimension of the space $\cP_r$ and $q_0$ and not on $q,I$ or $N$,  once $N^d>\rho$.  The fact that the constants do not depend on $I$ is a simple matter of rescaling  which we do not discuss further.  So in proving the lemma,  we can assume that
$I=\Omega=(0,1)^d$.

It is well known, see for instance (3.1) in \cite{DS}, that the (quasi-)norms in (i) (which do not depend on $N$) are all uniformly equivalent for different values of $q \geq q_0$.
Similarly, the (quasi-)norms in (iii) are all uniformly equivalent for different values of $q \geq q_0$ (since these (quasi-)norms are simply on $\R^\rho$ and $\rho$ is fixed), and hence equivalent to the $q=2$ norm. From the orthogonality, it follows that the norms in  (ii) and (iii) are equal when $q = 2$. 

We will complete the proof by showing that (i) and (ii) are equivalent (uniformly in $q \geq q_0$ and $N^d > \rho$).   We denote  the cubes in the grid $\Lambda_I$ by $J$ and their collection by $\cJ_N$.
     For each $J\in\cJ_N$, we denote by  $x_J$    the lower left corner of $J$.  If  $Q\in\cP_r$, we define
$$
S_N(Q):=\sum_{J\in\cJ_N}Q(x_J)\chi_J,
$$ 
where $\chi_J$ is the characteristic function of $J$.
Then, $S$ is a piecewise constant function. From Markov's inequality  for polynomials, if $Q\in\cP_r$, we have
$$
|Q(x)-Q(x_J)|\le C(r) N^{-1}\|Q\|_{L_\infty(I)} \le C(r)N^{-1}\|Q\|_{L_q(I)},\quad x\in J.
$$ 
Here we note that the constant $C(r)$ can be chosen uniformly in $q_0 \leq q \leq \infty$. Therefore, we have
$$ 
\|Q-Q(x_J)\|_{L_q(J)}\le C(r) \|Q\|_{L_q(I)} N^{-1}|J|^{1/q}, \quad J\in\cJ_N,
$$
and hence
$$
\|Q-S_N(Q)\|_{L_q(I)} \le C(r)\|Q\|_{L_q(I)}N^{-1}.
$$
It follows that
$$ 
|\|Q\|_{L_q(I)} -\|S_N(Q)\|_{L_q(I)}|\le C(r)\|Q\|_{L_q(I)}N^{-1}.
$$
We now choose $N_0$ so that $C(r)N_0^{-1}\le 1/2$. 
Then, for $N\ge N_0$, we obtain the equivalency of $\|S_N(Q)\|_{L_q(I)}$ with $\|Q\|_{L_q(I)}$ with fixed equivalency constants.  This gives the equivalence of $\|(Q(x_j))\|^*_{\ell_q}$ and $ \|Q\|^*_{L_q(I)}$ since 
$$
\|S_N(Q)\|_{L_q(I)}=\|(Q(x_j))\|^*_{\ell_q} {\rm ~and~} \|Q\|_{L_q(I)}=\|Q\|^*_{L_q(I)}.
$$
On the other hand, we have the equivalence of the (quasi-)norms with $N\le N_0$ with fixed equivalency constants because there are only a finite number of them. The uniformity in $q_0 \leq q \leq \infty$ is obtained because the constants of equivalence are continuous functions of $1/q$ for fixed $N$, and hence these constants achieve a finite maximum on the compact interval $[0,1/q_0]$. In summary, we obtain that all the (quasi-)norms in (i) and (ii) are equivalent on $\cP_r$ with equivalency constants only depending on $r$.

Finally, let us use the norm in  (i) with $q=\infty$  and the norm in (iii) with  $q=2$, for  $Q=Q_{I,j}$, $j=1, \ldots,\rho$ (recall the $Q_{I,j}$'s  form an orthonormal basis for $\cP_r$ in the Hilbert space $L_2(\mu_I)$). We have shown that these two norms are equivalent, from which we obtain that  
 $$
 1\asymp \|Q_{I,j}\|_{L_\infty(I)},
 $$
 which proves \eref{QIjbound}.
 \end{proof}

\section {Proof of Lemma \ref{L:known}}
\label{LemmaK}

\begin{proof}
Note that if $a < 1$, there is nothing to prove. For $a \geq 1$, the following holds
\begin{eqnarray}
\nonumber
\int_a^\infty t^qe^{-t^2/2}\,dt&=&
\int_a^\infty (t^{q-1}e^{-t^2/4})(te^{-t^2/4})\,dt\nonumber\\
&\leq&
\left(\max_{t \geq 1} t^{q-1}e^{-t^2/4}\right)\int_a^\infty te^{-t^2/4}\,dt
\leq
C_qe^{-a^2/4}.
 \nonumber \quad\quad\qedhere
\end{eqnarray}
\end{proof}

\section{Proof of Lemma \ref{L:known2}}
\label{LemmaK2}
\begin{proof}
First, let us observe that  
\begin{equation}
 \sum_{k\ge 0}  2^{a k}e^{-c2^{b k}\tau} \le \sum_{k\ge 0}  2^{a_0 k}e^{-c2^{b_0 k}\tau}, \quad \tau\ge 1.
\end{equation}
Note that  there is a $
\bar k=\bar k(a_0,b_0,c)$ such that
\begin{equation}
 a_0k\le \frac{c}{2}(2^{b_0 k}-1), \quad k\ge \bar k.
\end{equation}
Hence, we have
\begin{equation}
\sum_{k=0}^{\bar k}  2^{a_0 k}e^{-c2^{b_0 k}\tau}\leq 
e^{-c\tau}\sum_{k=0}^{\bar k}  2^{a_0 k}\le  C(a_0,b_0,c)e^{-c\tau},
\end{equation}
and 
\begin{equation}
\sum_{k>\bar k}  2^{a_0 k}e^{-c2^{b_0 k}\tau}\le
e^{-c\tau}\sum_{k>\bar k}^\infty  2^{a_0 k}e^{-c(2^{b_0 k}-1)\tau} \le e^{-c\tau}
\sum_{k>\bar k}^\infty  e^{-\frac{c}{2}(2^{b_0 k}-1)} 
\le  C(a_0,b_0,c)e^{-c\tau},
\end{equation}
 where we have used the definition of $\bar k$ and the fact that $\tau\ge 1$. 
\end{proof}

\section{Proof of Lemma \ref{L:perturb}}
\label{SS:A2}

\begin{proof}
    We denote by
\begin{equation}
        \bar\beta :=\mu_{y,\sigma}(B):= \frac{1}{(2\pi \sigma^2)^{m/2}}\int_B e^{-\|x-y\|^2/2\sigma^2} dx,
\end{equation}
and will show that $\bar\beta<\frac{1}{2}$. 
Observe that   
\be
        1-\bar\alpha = \frac{1}{(2\pi \sigma^2)^{m/2}}\int_{B^c} e^{-\|x\|^2/2\sigma^2} dx,\quad 1-\bar\beta = \frac{1}{(2\pi \sigma^2)^{RR2}}\int_{B^c} e^{-\|x-y\|^2/2\sigma^2}dx.
 \ee
 Since
    \be 
   \|y\|^2= \|x\|^2-\|x-y\|^2 -2\langle x-y,y\rangle,
    \ee 
    and $\langle z,y\rangle$ is an odd function of $z$, we have   
    \begin{eqnarray}
    \frac{\|y\|^2}{2\sigma^2} &= &\frac{1}{(2\pi \sigma^2)^{m/2}}\int_{\mathbb{R}^d} \frac{1}{2\sigma^2}\left(\|x\|^2- \|x-y\|^2\right)e^{-\|x-y\|^2/2\sigma^2} dx\nonumber \\
        &=& \frac{1}{(2\pi \sigma^2)^{m/2}}\int_{\mathbb{R}^d} -\ln\left(\frac{\exp(-\|x\|^2/2\sigma^2)}{\exp(-\|x-y\|^2/2\sigma^2)}\right)e^{-\|x-y\|^2/2\sigma^2} dx.
    \end{eqnarray}
    We divide the last  integral   into integrals over the sets $B$ and $B^c$ and use Jensen's inequality and the convexity of $(-\ln x)$ to obtain
    \begin{eqnarray}
    \label{y}
        \frac{\|y\|^2}{2\sigma^2} &=& \bar\beta\int_{B} -\ln\left(\frac{\exp(-\|x\|^2/2\sigma^2)}{\exp(-\|x-y\|^2/2\sigma^2)}\right)\frac{e^{-\|x-y\|^2/2\sigma^2}}{(2\pi \sigma)^{m/2}\bar\beta} dx 
        \nonumber\\\nonumber\\
        &+& (1-\bar\beta)\int_{B^c} -\ln\left(\frac{\exp(-\|x\|^2/2\sigma^2)}{\exp(-\|x-y\|^2/2\sigma^2)}\right)\frac{e^{-\|x-y\|^2/2\sigma^2}}{(2\pi \sigma)^{m/2}(1-\bar\beta)} dx
        \nonumber\\\nonumber\\
        &\geq& -\bar\beta\ln\left(\frac{\bar\alpha}{\bar\beta}\right)-(1-\bar\beta)\ln\left(\frac{1-\bar\alpha}{1-\bar\beta}\right).
    \end{eqnarray}
    Now, if   $\bar{\beta} \geq 1/2$, we will show that $\|y\|^2\geq -\sigma^2\ln(5\bar\alpha)$ which would contradict the assumptions of the lemma.
    
    Note that 
    the first term above is lower bounded by
   \be
        -\bar\beta\ln\left(\frac{\bar\alpha}{\bar\beta}\right) \geq -\frac{1}{2}\ln(2\bar\alpha).
 \ee
    On the other hand, the second term is lower bounded by
  \be
        -(1-\bar\beta)\ln\left(\frac{1-\bar\alpha}{1-\bar\beta}\right) \geq (1-\bar\beta)\ln\left(1-\bar\beta\right) \geq \min_{0 < t < 1}\left\{ t\ln(t)\right\} = -e^{-1}.
\ee
 We use these bounds in \eref{y} to obtain that
\be
        \|y\|^2 \geq -\sigma^2[\ln(2\bar\alpha) + 2e^{-1}] = -\sigma^2\ln(2e^{2/e}\bar\alpha) \geq -\sigma^2\log(5\bar\alpha),
\ee
and this completes the proof of the lemma.
\end{proof}

\end{document}